\newtheorem{theorem}{Theorem}[subsection]
\newtheorem{definition}[theorem]{Definition}
\newtheorem{definition-lemma}[theorem]{Definition/Lemma}
\newtheorem{definition-explanation}[theorem]{Definition/Explanation}
\newtheorem{explanation-definition}[theorem]{Explanation/Definition}
\newtheorem{definition-fact}[theorem]{Definition/Fact}
\newtheorem{definition-notation}[theorem]{Definition/Notation}
\newtheorem{lemma}[theorem]{Lemma}
\newtheorem{lemma-definition}[theorem]{Lemma/Definition}
\newtheorem{proposition}[theorem]{Proposition}
\newtheorem{corollary}[theorem]{Corollary}
\newtheorem{remark}[theorem]{\it Remark}
\newtheorem{example}[theorem]{Example}
\newtheorem{example-definition}[theorem]{Example/Definition}
\newtheorem{notation}[theorem]{Notation}
\newtheorem{definition-prototype}[theorem]{Definition-Prototype}
\newtheorem{statement}[theorem]{Statement}
\numberwithin{equation}{subsection}
\newtheorem{stheorem}{Theorem}[section]
\newtheorem{sdefinition}[stheorem]{Definition}
\newtheorem{sdefinition-lemma}[stheorem]{Definition/Lemma}
\newtheorem{sdefinition-explanation}[stheorem]{Definition/Explanation}
\newtheorem{sexplanation-definition}[stheorem]{Explanation/Definition}
\newtheorem{sdefinition-fact}[stheorem]{Definition/Fact}
\newtheorem{sdefinition-notation}[theorem]{Definition/Notation}
\newtheorem{slemma-definition}[stheorem]{Lemma/Definition}
\newtheorem{sexample-definition}[stheorem]{Example/Definition}
\newtheorem{sdefinition-prototype}[stheorem]{Definition-Prototype}
\newtheorem{ssdefinition-lemma}[sstheorem]{Definition/Lemma}
\newtheorem{ssdefinition-explanation}[sstheorem]{Definition/Explanation}
\newtheorem{ssexplanation-definition}[sstheorem]{Explanation/Definition}
\newtheorem{ssdefinition-fact}[stheorem]{Definition/Fact}
\newtheorem{ssdefinition-notation}[theorem]{Definition/Notation}
\newtheorem{sslemma-definition}[sstheorem]{Lemma/Definition}
\newtheorem{ssexample-definition}[sstheorem]{Example/Definition}
\newtheorem{ssdefinition-prototype}[sstheorem]{Definition-Prototype}
\newcommand{\Gr}{\mbox{\it Gr}\,}
\newcommand{\Hess}{\mbox{\it Hess}}
\newcommand{\Imaginary}{\mbox{\it Im}\,}
\newcommand{\Image}{\mbox{\it Im}\,}
\newcommand{\Ker}{\mbox{\it Ker}\,}
\newcommand{\Mor}{\mbox{\it Mor}\,}
\newcommand{\Real}{\mbox{\it Re}\,}
\newcommand{\distance}{\mbox{\it distance}\,}
\newcommand{\id}{\mbox{\it id}\,}
\newcommand{\pr}{\mbox{\it pr}}
\newcommand{\pt}{\mbox{\it pt}}
\newcommand{\vol}{\mbox{\it vol}\,}
\begin{document}

\enlargethispage{24cm}

\begin{titlepage}

$ $

\vspace{-1.5cm} 

\noindent\hspace{-1cm}
\parbox{6cm}{\small June 2011}\
   \hspace{7cm}\
   \parbox[t]{5cm}{yymm.nnnn [math.DG] \\
                   D(8.1): A-brane, sL $S^3$,\\ $\mbox{\hspace{3.3em}}$
                           L-unwrapping}

\vspace{2cm}

\centerline{\large\bf
 A natural family of immersed Lagrangian deformations of}
\vspace{1ex}
\centerline{\large\bf
 a branched covering of a special Lagrangian $3$-sphere
 in a Calabi-Yau $3$-fold}
\vspace{1ex}
\centerline{\large\bf
 and its deviation from Joyce's criteria:}
\vspace{1ex}
\centerline{\large\bf
 Potential image-support rigidity of A-branes that wrap around a sL $S^3$}


\vspace{3em}

\centerline{\large
  Chien-Hao Liu
  \hspace{1ex} and \hspace{1ex}
  Shing-Tung Yau
}

\vspace{6em}

\begin{quotation}
\centerline{\bf Abstract}

\vspace{0.3cm}

\baselineskip 12pt  
{\small
  Using
   a hyperK\"{a}hler rotation on complex structures
    of a Calabi-Yau $2$-fold  and
   rolling of an isotropic $2$-submanifold
    in a symplectic $6$-manifold,
  we construct, by gluing,
   a natural family of immersed Lagrangian deformations
   of a branched covering of a special Lagrangian $3$-sphere
   in a Calabi-Yau $3$-fold and
  study how they deviate from being deformable to a family of
   special Lagrangian deformations by examining in detail
   Joyce's criteria on this family.
  The result suggests a potential image-support rigidity of A-branes
   that wrap around a special Lagrangian $3$-sphere
   in a Calabi-Yau $3$-fold,
  which resembles a similar phenomenon for holomorphic curves
   that wrap around a rigid smooth rational curve
   in a Calabi-Yau $3$-fold in Gromov-Witten theory.
} 
\end{quotation}

\vspace{15em}

\baselineskip 12pt
{\footnotesize
\noindent
{\bf Key words:} \parbox[t]{14cm}{A-brane, wrapping;
 special Lagrangian $3$-sphere, branched covering;
 Joyce's criteria, immersed Lagrangian deformation.
 }} 

\bigskip

\noindent {\small MSC number 2010:
 53C38, 53C42, 57M12; 81T30, 81T75.
} 

\bigskip

\baselineskip 10pt
{\scriptsize
\noindent{\bf Acknowledgements.}
We thank
 Frederik Denef, Andrew Strominger, and Cumrun Vafa
  for lectures that influence our understanding of strings, branes,
  and gravity.
C.-H.L.\ thanks in addition
 Adrian Butscher
  for communicating his thesis with generous remarks;
 A.B., Dominic Joyce, Yng-Ing Lee, and Sema Salur
  for their work that influences his understanding
  of themes behind/beyond the current note;
 Clay Cordova
  for a discussion on the notion of pure D-branes;
 Liang Kong
  for communicating his thought on conformal field theory and D-branes;
 Erel Levine, Jacob Lurie, Andrew Strominger, and Horng-Tzer Yau
  for basic/topic courses, spring 2011; and
 Ling-Miao Chou for moral support.
 The project is supported by NSF grants DMS-9803347 and DMS-0074329.
} 

\end{titlepage}

\newpage
\begin{titlepage}

$ $

\vspace{12em} 

\centerline{\small\it
 Chien-Hao Liu dedicates this note to professors (time-ordered)}
\centerline{\small\it
 Hai-Chau Chang$^{\ast}$, Su-Win Yang, Ai-Nung Wang,}
\centerline{\small\it
 who introduced him the beauty of modern geometry and topology}
\centerline{\small\it
 in his undergraduate years.}

\vspace{36em}

\baselineskip 11pt

{\footnotesize
\noindent
$^{\ast}$(From C.-H.L.)
 Special tribute to {\it Prof.\ Chang},
  a mentor and a friend throughout my college years
 who has changed the course of my life.
} 

\end{titlepage}

\newpage
$ $

\vspace{-3em}

\centerline{\sc
 Immersed Lagrangian Deformation of Branched Covering of SL $S^3$}

\vspace{2em}


\begin{flushleft}
{\Large\bf 0. Introduction and outline.}
\end{flushleft}
In the geometric phase of the Wilson's theory-space
 for a boundary conformal field theory with weak D-brane tension,
D-branes are realized in part
 as morphisms from Azumaya noncommutative spaces
   with a fundamental module (with a connection)
   to string-theory target-space(-time).
For physical A-branes in the supersymmetric case,
 the connection is required to be flat, possibly with singularities,
  and the associated maps from the surrogates are required
  to be special Lagrangian morphisms.
See [L-Y2: Sec.~2.1] for more explanations and references.

In the special case when D-branes wraps a special Lagrangian $3$-sphere
 in a Calabi-Yau $3$-fold,
it follows from Robert McLean [McL] that the latter is rigid
 for a topological reason  and
it is natural to ask
 whether through such wrapping
  one can deform the image D-brane away
  from the given special Lagrangian $3$-sphere.
The answer would influence, for example,
 the details of the quiver gauge field theory associated to a collection
  of special Lagrangian 3-spheres that intersect transversely
  in a Calabi-Yau 3-fold  and
 the multiple cover formula for D3-branes (or Euclidean D2-branes)
  that wrap a special Lagrangian $3$-sphere.

Before we can address how to deal with this problem,
we devote first this note to analyzing in detail
 how some existing constructions and techniques could fail
 in the current situation.

This is a sequel to [L-Y1] (D(6)) and [L-Y2] (D(7)).
In the first part of this note (Sec.~1 -- Sec.~3),
we take a fundamental existence theorem of Dominic Joyce
  ([Jo3: III.~Theorem~5.3]) as the starting point
  (cf.\ Sec.~1),
 construct a natural family $f^t:N^t\rightarrow Y$
  of smooth immersed Lagrangian submanifolds
  -- in a similar spirit as is done in [Sa1] of Sema Salur
     for resolving a codimension-$2$ singularity of
     a singular special Lagrangian submanifold in a Calabi-Yau $3$-fold
     --
  that has the special Lagrangian branched covering
   $f:X\rightarrow Y$ to begin with
   as its limit in $C^{\infty}$-topology
   as well as in the sense of current when $t\rightarrow 0$
  (cf.\ Sec.~3.1),  and
 check how
   Joyce's criteria of deformability
    to special Lagrangian submanifolds  and
   standard techniques to justify them
  behave on the family $\{f^t\}_t$
  (cf.\ Sec.~3.2 -- Sec.~3.4).
Some necessary background and ingredients for the study are given
 in Sec.~2 and the beginning of Sec.~3.2; and
a summary on the deviation of $\{f_t\}_t$ from Joyce's criteria
 is given in Sec.~4.
The investigation reveals a potential image-support rigidity
 of A-branes that wrap around a special Lagrangian $3$-sphere
 in a Calabi-Yau $3$-fold.
This resembles a similar phenomenon for holomorphic curves
  that wrap around a rigid smooth rational curve
  in a Calabi-Yau $3$-fold in Gromov-Witten theory.

Finally, we should remark that,
  instead of taking Joyce's Existence Theorem as the starting point,
 one can also proceed to understand the problem
  from a direct approach through a route following
   Adrian Butscher ([Bu1], [Bu2]), Yng-Ing Lee [Lee], and
   Semar Salur [Sa1].

\bigskip

\noindent
{\bf Convention.}
 Standard notations, terminology, operations, facts\footnote{Cf.\
           [L-Y2: footnote~2] (D(7)): {\it Apology}.}
  in
  (1) Riemannian, spectral/hyperK\"{a}hler geometry;
  (2) analysis on Riemannian manifolds;
  (3) symplectic/calibrated geometry;
  (4) branched coverings
 can be found respectively in$\,$
  (1) [G-H-L], [S-Y]$\,/\,$[Jo2];$\,$
  (2) [Au];$\,$
  (3) [McD-S], [G-S]$\,/\,$[H-L], [Ha], [McL];$\,$
  (4) [Ro].
 \begin{itemize}
  \item[$\cdot$]
   `{\it $n$-(sub)manifold}'
     for {\it real} (sub)manifold of (real) dimension $n$
    vs.\ `{\it $n$-fold}'
     for {\it complex} manifold of (complex) dimension $n$.

  \item[$\cdot$]
   `{\it Branch locus}' $\Gamma$ of a map
    vs.\ `{\it graph}' $\Gamma(\alpha)$ of a $1$-form $\alpha$
    vs.\ the space $\Gamma(\,\cdot\,)$ of {\it sections} of a bundle
    vs.\ the `{\it Christoffel symbols}' $\Gamma^i_{jk}$.

  \item[$\cdot$]
   `{\it Connection}' {\boldmath $\omega$}
    and `{\it curvature}' {\boldmath $\Omega$}
    vs.\ `{\it K\"{a}hler/symplectic structure}' $\omega$
         and `{\it holomorphic $n$-form}' $\Omega$.
   The latter
    vs.\ the space $\Omega^k(N)$ of `{\it $k$-forms}' on a manifold $N$.

  \item[$\cdot$]
   `{\it Distribution}' in the sense of generalized functions
    vs.\ `{\it distribution}' in the sense of a subbundle
     of a tangent bundle or its restriction to a submanifold.

  \item[$\cdot$]
   The various constants $C,\,D,\,\cdots$ that appear in an estimate
    are unspecified constants that, in general, may of different values
    at difference places.

  \item[$\cdot$]
   The various built-in {\it projection maps} of
    bundles to their base are denoted by $\pi$.

  \item[$\cdot$]
   The Laplacian $\Delta$
   and its eigenvalues $\lambda_k$ for a Riemannian manifold $(M,g)$.
   When $\partial M\ne \emptyset$,
    this is referred to the related Dirichlet problem
    with vanishing boundary value.

  \item[$\cdot$]
   Notations related to the various constructions
    follow their counterpart in [Jo3: III and IV] as much as we can.
   Proofs that follow essentially the same argument
    as in their counterpart in ibidem will be
    either omitted
    or given only a sketch for spelling out the modified part.

  \item[$\cdot$]
   A partial review of D-branes and Azumaya noncommutative geometry
    is given in [L-Y1] (D(6)).
   The current work addresses [L-Y2: Sec.~2.3, Question 2.3.9] (D(7)).
 \end{itemize}

\bigskip

\bigskip

\begin{flushleft}
{\bf Outline.}
\end{flushleft}
{\small
\baselineskip 12pt  
\begin{itemize}
 \item[0.]
  Introduction.

 \item[1.]
  Joyce's Existence Theorem on immersed special Lagrangian submanifolds.

 \item[2.]
  Basic Riemannian, complex, and symplectic ingredients.
  \vspace{-.6ex}
  \begin{itemize}
   \item[2.1]
    A lower bound of the first eigenvalue of the Laplacian
    on a Riemannian manifold with tame curvature singularity.

   \item[2.2]
    A class of embedded special Lagrangian submanifolds in the flat
    Calabi-Yau 3-fold\\ ${\Bbb R}^2\times S^1\times{\Bbb R}^3$.

   \item[2.3]
    Lagrangian Neighborhood Theorems.

   \item[2.4]
    Admissible Lagrangian neighborhoods for $L^{\prime,a}$
    in $Y^{\prime}$ under $\psi^a$ and
    their geometry under partial scaling.
  \end{itemize}

 \item[3.]
  Immersed Lagrangian deformations of a simple normalized branched
   covering of a special Lagrangian 3-sphere in a Calabi-Yau 3-fold and
  their deviation from Joyce's criteria.
  \vspace{-.6ex}
  \begin{itemize}
   \item[3.1]
   Immersed Lagrangian deformations of a branched covering
    of a special Lagrangian 3-sphere in a Calabi-Yau 3-fold.

   \item[3.2]
    Estimating {\it Im}$\,\Omega^s|_{N^t}$.

   \item[3.3]
    Lagrangian neighborhoods and bounds on $R(g^t)$, $\delta(g^t)$.

   \item[3.4]
    Sobolev immersion inequalities on $N^t$.

  \end{itemize}

 %
 %
 %
 %

 \item[4.]
  Summary and remark:
  Input from the topology of $X$ and the branching of $f$.
  %
  %
  %
\end{itemize}
} 

\newpage

\section{Joyce's Existence Theorem on immersed special Lagrangian
         submanifolds.}

For the basic definitions, notations, and terminology
 to be used in this work,
we recall
  Joyce's Existence Theorem on deforming
  an immersed almost special Lagrangian submanifold to
  an immersed special Lagrangian submanifold,
  ([Jo3: III.\ Sec.~5.1 and Sec.~5.2])\footnote{While
                         it is theorems in the embedded case
                          that are stated in Joyce's work,
                         they generalize immediately to the immersed case.}
 with only
  mild adaptation
   from the general almost Calabi-Yau case to the Calabi-Yau case and
  mild change of notations for consistency with later part of the work.

\bigskip

Let $(N,g)$ be a Riemannian manifold,
 with injective radius $\delta(g)$ and volume form $dV_g$.
Denote the various Banach spaces of functions on $N$ as follows:
\begin{itemize}
 \item[$\cdot$]
  $C^k(N)$, $k\ge 0\,$:
   the {\it Banach space} of continuous bounded functions on $N$
   that have $k$ continuous bounded derivatives;
  $\;\|f\|_{C^k}:=\sum_{j=0}^k\sup_N|\nabla^jf|$.
  $\;C^{\infty}(N):= \bigcap_{k\ge 0}\,C^k(N)$.

 \item[$\cdot$]
  $C^{k,\alpha}$, $k\ge 0$, $\alpha\in (0,1)\,$:
   the {\it H\"{o}lder space} of elements $f \in C^k(N)$ for which
    $|\nabla^kf|_{\alpha}
     :=\sup_{x\ne y, dist(x,y)<\delta(g)}
          \frac{|\nabla^k f(x)-\nabla^k f(y)|}{d(x,y)^{\alpha}}$
    is finite;
  $\;\|f\|_{C^{k,\alpha}}:= \|f\|_{C^k}+|\nabla^kf|_{\alpha}$.

 \item[$\cdot$]
  $L^q(N)$, $q\ge 1\,$: the {\it Lebesque space}
   of locally integrable functions $f$ on $N$ for which the norm
   $\|f\|_{L^q}:=\left(\int_N|f|^q dV_g\right)^{1/q}$ is finite.

 \item[$\cdot$]
  $L^q_k(N)$, $q\ge 1$, $k\ge 0\,$:
   the {\it Sobolev space} of elements $f\in L^q(N)$ such that
    $f$ is $k$ times weakly differentiable and
    $|\nabla^jf|\in L^q(N)$ for $j\le k\,$;
  $\;\|f\|_{L^q_k}:=\left(\sum_{j=0}^k\int_N|\nabla^jf|^q\right)^{1/q}$.
\end{itemize}

\begin{stheorem}
{\bf [Sobolev embedding].}
{\rm ([Au: Theorem~2.30], [Jo3: III.~Theorem~5.1].)}
 Suppose
  $(N,g)$ is a compact Riemannian $n$-manifold,
  $k\ge l\ge 0$ are integers,
  $\alpha\in (0,1)$, and
  $q,\,r\ge 1$.
 If $\frac{1}{q}\le \frac{1}{r}+\frac{k-l}{n}$,
  then $L^q_k(N)$ is continuously embedded in $L^r_l(N)$ by inclusion.
 If $\frac{1}{q}\le \frac{k-l-\alpha}{n}$,
  then $L^q_k(N)$ is continuously embedded in $C^{l,\alpha}(N)$
   by inclusion.
\end{stheorem}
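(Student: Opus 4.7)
\begin{flushleft}
\textbf{Proof proposal.}
\end{flushleft}

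The plan is to reduce both embeddings to their classical Euclidean analogues --- Gagliardo--Nirenberg--Sobolev for the first, Morrey for the second --- by a partition-of-unity argument that exploits the compactness of $N$. Cover $N$ by finitely many normal-coordinate balls $\{U_i\}_{i=1}^m$ of radius $r<\delta(g)/2$; on each of these, the metric coefficients $g_{ab}$, $g^{ab}$ and their derivatives up to order $k$ are uniformly bounded and comparable to the flat ones, with constants depending only on $(N,g)$. Fix a subordinate partition of unity $\{\chi_i\}$ with $\|\chi_i\|_{C^{k+1}}$ uniformly bounded. For $f\in L^q_k(N)$ decompose $f=\sum_i\chi_i f$; it will suffice to bound each $\chi_i f$ after transport to $\mathbb{R}^n$ by the coordinate chart.

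For the Euclidean model, one proves by the standard device --- bounding $|u(x)|$ along each coordinate axis by $\int|\partial_a u|\,dt$, multiplying the $n$ resulting one-dimensional estimates, and applying the generalized H\"older inequality --- the Gagliardo--Nirenberg--Sobolev inequality
\[
 \|u\|_{L^{q^{\ast}}(\mathbb{R}^n)}
  \;\le\; C\,\|\nabla u\|_{L^q(\mathbb{R}^n)},
 \qquad
 q^{\ast}=\tfrac{nq}{n-q},\ \ 1\le q<n,
\]
for $u\in C_c^\infty(\mathbb{R}^n)$. Iterating $(k-l)$ times, and interpolating with $L^r$ whenever the gap $\tfrac{1}{r}+\tfrac{k-l}{n}-\tfrac{1}{q}$ is strictly positive, gives the first embedding $W^{k,q}\hookrightarrow W^{l,r}$ on $\mathbb{R}^n$. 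For the second, Morrey's inequality $W^{1,q}(\mathbb{R}^n)\hookrightarrow C^{0,1-n/q}(\mathbb{R}^n)$, valid for $q>n$, is applied to $\nabla^l u$ after $(k-l-1)$ iterations of the first inequality, under the sharper exponent condition $\tfrac{1}{q}\le\tfrac{k-l-\alpha}{n}$.

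The global step is then routine bookkeeping. On each chart, the intrinsic covariant derivatives $\nabla^j(\chi_i f)$ differ from the Euclidean partial derivatives of the transported function only by terms that are polynomial in the Christoffel symbols and their derivatives of order $<j$, and these are uniformly bounded by compactness. Consequently the intrinsic $L^q_k(N)$-norm of $\chi_i f$ is equivalent, up to constants depending only on $(N,g)$, to its Euclidean $W^{k,q}$-norm on the chart. Applying the Euclidean inequality of the previous paragraph on each chart and summing over $i$ yields the first statement. For the H\"older statement one also needs the intrinsic distance on $N$ to be comparable to the Euclidean distance on scales below $\delta(g)$, which again follows from compactness.

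The only genuinely delicate point --- and the main obstacle if one wished to work on a noncompact $N$ --- is obtaining uniform control over the constants appearing in the metric comparison and in the partition of unity: one requires $\delta(g)>0$, bounded curvature, and bounded geometry to ensure uniformity across all charts. On a \emph{compact} $(N,g)$ these all come for free, so the theorem reduces to finitely many applications of the Euclidean model inequalities, with no geometric input beyond the choice of a convenient cover.
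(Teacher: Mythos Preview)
Your outline is the standard proof and is correct in broad strokes. However, the paper does not supply its own proof of this statement at all: it is stated as a citation to [Au: Theorem~2.30] and [Jo3: III.~Theorem~5.1], and is used as a black box thereafter. So there is nothing to compare against --- your sketch simply fills in what the cited references contain.
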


\begin{sdefinition}
{\bf [basic setup].} {\rm ([Jo3: III.~Definition~5.2].)}
{\rm
 Let $(M,J,\omega,\Omega)$ be a Calabi-Yau $m$-fold with metric $g_M$.
 Let $N$ be a compact, oriented, immersed, Lagrangian $m$-submanifold
  in $M$, with immersion $\iota:N\rightarrow M$,
  so that $\iota^{\ast}\omega\equiv 0$.
 Define $g:= \iota^{\ast}g_M$, so that $(N,g)$ is a Riemannian manifold.
 \begin{itemize}
  \item[(1)] {\it Phase function $e^{i\theta}$.} \hspace{1em}
   Let $dV:=dV_g$ be the volume form on $N$
    induced by the metric $g$ and orientation.
   Then $|\iota^{\ast}\Omega|\equiv 1$,
    calculating $|\,\cdot\,|$ using $g$ on $N$.
   Therefore, we may write
    $$
     \iota^{\ast}\Omega\;=\; e^{i\theta}\,dV
      \hspace{1em}\mbox{on $\,N$}\,,
    $$
    for some phase function $e^{i\theta}$ on $N$.
   Suppose that $\cos\theta\ge \frac{1}{2}$ on $N$.
   Then we can choose $\theta$ to be a smooth function
    $\theta:N\rightarrow (-\frac{\pi}{3},\frac{\pi}{3})$.
   Suppose that $[\iota^{\ast}(\Imaginary\Omega)]=0$ in $H^m(N;{\Bbb R})$.
   Then $\int_N\,\sin\theta\,dV=0$.

  \item[(2)] {\it Vector space $W$.} \hspace{1em}
   Let $W\subset C^{\infty}(N)$
    be a given a finite-dimensional vector space with $1\in W$.
   Define $\pi_W:L^2(N)\rightarrow W$ be the projection onto $W$
    using the $L^2$-inner product.

  \item[(3)]
   {\it Neighborhood ${\cal B}_r$ of the zero-section in $T^{\ast}N$.}
    \hspace{1em}
   For $r>0$, define ${\cal B}_r\subset T^{\ast}N$ to be the bundle of
    $1$-forms $\alpha$ on $N$ with $|\alpha|<r$.
   ${\cal B}_r$ is a noncompact $2m$-manifold with natural projection
    $\pi:{\cal B}_r\rightarrow N$, whose fiber at $x\in N$
    is the ball of radius $r$ about $0$ in $T^{\ast}_xN$.
   We identify $N$ also with the zero-section of ${\cal B}_r$
    and write $N\subset {\cal B}_r$.

  \item[(4)] {\it Geometry on ${\cal B}_r$.}
    \hspace{1em}
   At each $y\in{\cal B}_r$ with $\pi(y)=x\in N$,
    the Levi-Civita connection $\nabla$ of $g$ on $T^{\ast}N$
    defines a splitting $\,T_y{\cal B}_r\,=\, H_y\oplus V_y\,$
    into horizontal and vertical subspaces $H_y,\, V_y\,$,
    with $H_y\simeq T_xN$ and $V_y\simeq T^{\ast}_xN$.
   Let $\hat{\omega}=\omega_{can}$
    for the canonical symplectic structure on ${\cal B}_r\subset T^{\ast}N$,
    defined using $T{\cal B}_r=H\oplus V$ and $H\simeq V^{\ast}$.
   Define a natural Riemannian metric $\hat{g}$ on ${\cal B}_r$
    such that the subbundles $H,\, V$ are orthogonal, and
    $\,\hat{g}|_H\,=\, \pi^{\ast}(g)\,$,
    $\,\hat{g}|_V\,=\, \pi^{\ast}(g^{-1})\,$.

   \hspace{5.0mm}Let $\hat{\nabla}$ be the connection on
    $T_{\ast}{\cal B}_r\simeq H\oplus V$ given
     by the lift of the Levi-Civita connection $\nabla$ of $g$ on $N$
      in the horizontal directions $H$,  and
     by partial differentiation in the vertical directions $V$,
      which is well-defined as $T_{\ast}{\cal B}_r$ is naturally trivial
      along each fiber.\footnote{Recall
                 the projection map $\pi:{\cal B}_r\rightarrow N$.
                 The Levi-Civita $\nabla$ on $T^{\ast}N$
                 induces a bundle inclusion
                 $\pi^{\ast}(T_{\ast}N)\hookrightarrow T_{\ast}{\cal B}_r$
                 with image $H$.
                The pull-back partial connection on $\pi^{\ast}(T_{\ast}N)$
                 gives then a partial connection $\hat{\nabla}_H$ on $H$
                 along the horizontal distribution on ${\cal B}_r$
                 associated to $\nabla$.
                $V=\Ker( \pi_{\ast}:
                         T_{\ast}{\cal B}_r\rightarrow T_{\ast}N )$
                 with a built-in flat partial connection $\hat{\nabla}_V$
                 along fibers of $\pi$.
                As smooth vector bundles, $T_{\ast}{\cal B}_r=H\oplus V$
                 and the direct sum $\hat{\nabla}_H\oplus\hat{\nabla}_V$
                 of partial connections on the direct summands gives
                 the connection $\hat{\nabla}$ on $T_{\ast}{\cal B}_r$.}
    Then $\hat{\nabla}$ preserves $\hat{g}$, $\hat{\omega}$, and
     the splitting $T_{\ast}{\cal B}_r=H\oplus V$.
    It is not torsion-free in general, but has torsion $T(\hat{\nabla})$
     depending linearly on the Riemann curvature $R(g)$.

   \hspace{5.0mm}For convenience and with a slight abuse of terminology,
    we will call $\hat{\nabla}$
    the {\it pull-back connection} in the bundle $T_{\ast}{\cal B}_r$
     (or $T_{\ast}(T^{\ast}N)$)
    of the Levi-Civita connection $\nabla$ in $T_{\ast}N$
    via $\pi:{\cal B}_r\rightarrow N$.
    (Cf.\ Sec.~2.4.)

  \item[(5)] {\it ${\cal B}_r$ as immersed Lagrangian neighborhood
                  and $m$-form $\beta$ on ${\cal B}_r$} \hspace{1em}
   Since $\iota$ is an immersed Lagrangian submanifold,
   it follows from the Immersed Lagrangian Neighborhood Theorem that
    for some small $r>0$, there exists an immersion
    $\,\Phi:{\cal B}_r\rightarrow M\,$
    such that $\Phi^{\ast}\omega=\hat{\omega}$ and $\,\Phi|_N=\iota\,$.
   Define an $m$-form $\beta$ on ${\cal B}_r$
    by $\,\beta=\Phi^{\ast}(\Imaginary\Omega)\,$.

  \item[(6)] {\it Graph of 1-forms.} \hspace{1em}
   If $\alpha\in C^{\infty}(T^{\ast}N)$ with $|\alpha|<r$,
    write $\Gamma(\alpha)$ for the {\it graph} of $\alpha$
     in ${\cal B}_r$.
   Then $\Phi_{\ast}: \Gamma(\alpha)\rightarrow M$
    is a compact immersed submanifold in $M$ homotopic to
    $\iota\circ\pi|_{\Gamma(\alpha)}$.
 \end{itemize}
}\end{sdefinition}

\begin{stheorem}
{\bf [Joyce: from almost sL to sL].} {\rm ([Jo3: III.~Theorem~5.3].)}
 With the above notations with $m\ge 3$,
 let
  $\kappa>1$ and
  $A_1,\, A_2,\, A_4,\, A_5,\, A_6,\, A_7,\, A_8$ be real.\footnote{For
               an almost Calabi-Yau $m$-fold $(M, J,\omega,\Omega)$,
               $\,\psi^{2m}\omega^m/m!\,=\,
                (-1)^{m(m-1)/2}\,(i/2)^m\,\Omega\wedge\bar{\Omega}\,$
               for a unique smooth function $\psi:M\rightarrow (0,\infty)$.
              In our case,
               $\psi\equiv 1$, $A_3=1$, and
               Condition (ii) of [Jo3: III~Theorem~5.3],
                 which states that $\psi\ge A_3$ on $N$,
                holds automatically.
              As this work is based upon Joyce's work,
               we maintain the original notation and
                labelling of conditions in Joyce's Theorem.}
 Then
  there exist $\epsilon,\,K >0$ depending only on
   $\kappa,\, A_1,\, \cdots\,,\, A_8$ and $m$
  such that the following holds:

 Suppose $0<t\le \epsilon$ and Definition~0.2 holds with $r=A_1t$, and
  \begin{itemize}
  \item[]
  \begin{itemize}
   \item[{\rm (i)}]
    $\|sin\theta\|_{L^{2m/(m+2)}}\,\le\,A_2\,t^{\kappa+m/2}\,$,
    $\;\|\sin\theta\|_{C^0}\,\le\,A_2\,t^{\kappa-1}\,$,
    $\;\|d(\sin\theta)\|_{L^{2m}}\,\le\, A_2\,t^{\kappa-3/2}\,$,\\ and
    $\;\|\pi_W(\sin\theta)\|_{L^1}\,\le\,A_2\,t^{\kappa+m-1}\,$.

   \item[{\rm (iii)}]
    $\|\hat{\nabla}^k\beta\|_{C^0}\,\le\, A_4\,t^{-k}\,$
     for $k=0,\,1,\,2,\,$ and $3$.

   \item[{\rm (iv)}]
    The injective radius $\delta(g)$ satisfies
    $\,\|\delta(g)\|\,\ge\,A_5\,t\,$.

   \item[{\rm (v)}]
    The Riemann curvature $R(g)$ satisfies
     $\,\|R(g)\|_{C^0}\,\le\,A_6\,t^{-2}\,$.

   \item[{\rm (vi)}]
    If $\,v \in L^2_1(N)\,$ with $\,\pi_W(v)=0\,$,
    then
     $\,v\in L^{2m/(m-2)}(N)\,$ by Theorem~0.1, and\\
     $\,\|v\|_{L^{2m/(m-2)}}\,\le\,\,A_7\,\|dv\|_{L^2}\,$.

   \item[{\rm (vii)}]
    For all $w\in W$, we have
     $\,\|d^{\ast}dw\|_{L^{2m/(m+2)}}\,
                   \le\, \frac{1}{2}\,A_7^{-1}\,\|dw\|_{L^2}\,$.\\
    For all $w\in W$ with $\,\int_N\,w\,dV=0\,$, we have
     $\,\|w\|_{C^0}\,\le\,A_8\,t^{1-m/2}\,\|dw\|_{L^2}\,$.
  \end{itemize}
  \end{itemize}
 Here
  norms are computed using
   the metric $g$ on $N$ in {\rm (i), (v), (vi)} and {\rm (vii)}, and
   the metric $\hat{g}$ on ${\cal B}_{A_1t}$ in {\rm (iii)}.
 Then there exists $f\in C^{\infty}(N)$ with $\int_N\,f\,dV=0$
  such that
   $\,\|df\|_{C^0}\,\le\, Kt^{\kappa}\,<\,A_1t\,$  and
   $\Phi_{\ast}\,:\,\tilde{N}:= \Gamma(df)\,\rightarrow\, M$
    is an immersed special Lagrangian $m$-manifold
    in $(M,J,\omega,\Omega)$.
\end{stheorem}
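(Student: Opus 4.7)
The plan is to realize the desired function $f$ as a fixed point of a Picard iteration that linearizes the special Lagrangian condition on graphs of exact $1$-forms, and to use the scale-calibrated hypotheses (i)--(vii) to force that iteration to contract on a ball of radius $\sim t^{\kappa}$ in a suitable Sobolev space. First I would reinterpret the statement ``$\Phi_{\ast}\Gamma(df)$ is special Lagrangian'' as a scalar PDE in $f$: because $\hat\omega$ restricts to zero on the graph of any exact $1$-form, the Lagrangian condition is automatic, and the vanishing of $\Phi^{\ast}(\Imaginary\,\Omega)=\beta$ on $\Gamma(df)$, after division by the induced volume form, yields an equation of the schematic form
\begin{equation*}
 d^{\ast}df\;=\;-\sin\theta\,+\,Q(x,df,\nabla df),
\end{equation*}
where $Q$ is smooth, vanishes to second order at $(df,\nabla df)=(0,0)$, and has coefficients built from $\beta$ and its $\hat\nabla$-derivatives, hence controlled by hypothesis (iii).

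With this equation in hand, I would set up the iteration $f_0=0$ and
\begin{equation*}
 d^{\ast}df_{n+1}\;=\;(\Id-\pi_W)\bigl(-\sin\theta+Q(x,df_n,\nabla df_n)\bigr),
  \qquad \int_N f_{n+1}\,dV\,=\,0,
\end{equation*}
adjusting the $W$-component of $f_{n+1}$ separately via the second inequality in (vii). The subtraction of $\pi_W(\,\cdot\,)$ makes the linear problem solvable in the complement of $W$ and neutralizes any potential obstruction from approximate-kernel modes of $\Delta=d^{\ast}d$; at a fixed point one compares the projected and unprojected equations to check, using (vii) together with the $L^1$-smallness of $\pi_W(\sin\theta)$ from (i), that the $\pi_W$-correction vanishes on the limit. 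Convergence at each step is obtained by chaining the Poincar\'e--Sobolev bound $\|v\|_{L^{2m/(m-2)}}\le A_7\|dv\|_{L^2}$ on $W^{\perp}$ from (vi), the Sobolev embedding of Theorem~0.1 used in the scale-invariant form allowed by the injectivity-radius bound (iv) and the curvature bound (v), and the pointwise control of $Q$ derived from (iii). Hypothesis (i) is calibrated so that $\|df_1\|_{C^0}\lesssim t^{\kappa}$, and because $Q$ is quadratic in $(df,\nabla df)$ its successive contributions gain a factor $\sim t^{\kappa}$ per iteration, giving geometric convergence for $t\le\epsilon$ small enough. Elliptic regularity for $d^{\ast}df=(\text{smooth data})$ then bootstraps the $L^2_1$-limit to $C^{\infty}$, and $\|df\|_{C^0}\le Kt^{\kappa}<A_1t$ ensures $\Gamma(df)\subset {\cal B}_{A_1t}$, so $\Phi_{\ast}\Gamma(df)$ is a bona fide immersed special Lagrangian $m$-manifold.

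The main obstacle is the balancing of scales. The tube ${\cal B}_{A_1t}$ has radius $\sim t$, the metric $\hat g$ carries curvature of order $t^{-2}$, the defect $\sin\theta$ has $C^0$-size $\sim t^{\kappa-1}$, and the constant in the second inequality of (vii) is itself $\sim t^{1-m/2}$, so Sobolev constants, elliptic constants, and nonlinear remainder terms all carry explicit powers of $t$. One must verify that, after chaining them, the total contraction constant is strictly less than $1$ uniformly in small $t$. The precise exponents $\kappa+m/2$, $\kappa-1$, $\kappa-3/2$, $\kappa+m-1$ in hypothesis (i) are engineered to match exactly the losses incurred by Sobolev embedding on a manifold of injectivity radius $\sim t$, and to dominate after one step the quadratic nonlinearity $Q$; tracking these powers of $t$ through the nested elliptic and Sobolev estimates is the technical core of the argument, and this is the bookkeeping I expect to be the hardest part of the proof.
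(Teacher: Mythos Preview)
The paper does not prove this theorem. It is quoted verbatim (with mild adaptation to the Calabi--Yau case) from Joyce's work [Jo3: III, Theorem~5.3] as the starting point for the analysis in Sections~2--3, and the only comment following the statement is the pointer ``See [Jo3: IV, last paragraph of Sec.~5.1] for the reasons behind the design of the theorem.'' So there is no in-paper proof to compare your proposal against.

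That said, your sketch is a reasonable summary of the strategy Joyce actually uses in [Jo3: III, \S5]: rewrite the special Lagrangian condition on $\Gamma(df)$ as a scalar elliptic equation of the form $d^{\ast}df=-\sin\theta+Q(x,df,\nabla df)$, solve by an inductive sequence $f_0=0,\,f_1,\,f_2,\ldots$ with the $W$-component handled separately, and prove convergence by chaining the Sobolev-type inequality (vi) with the geometric bounds (iii)--(v) and the calibrated smallness (i). The point you identify as the ``main obstacle''---that every constant in the chain carries an explicit power of $t$ and one must check that the net contraction factor stays below $1$---is precisely the content of Joyce's argument, and the specific exponents in (i) are indeed reverse-engineered to make that bookkeeping close. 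For the purposes of this paper, however, the theorem is taken as a black box; the paper's contribution is to test whether the constructed family $f^t$ satisfies (i)--(vii), not to reprove Joyce's result.
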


\noindent
See [Jo3: IV, last paragraph of Sec.~5.1] for the reasons
 behind the design of the theorem.

\bigskip

\section{Basic Riemannian, complex, and symplectic ingredients.}

Preliminary ingredients
  that are needed to apply Joyce's Theorem in our situation
 are collected in this section.
They follow from standard techniques
 in Riemannian, complex, and symplectic geometry.

\bigskip

\subsection{A lower bound of the first eigenvalue of the Laplacian
  on a Riemannian manifold with tame curvature singularity.}

\begin{definition}
{\bf [Riemannian metric with tame curvature singularity].} {\rm
 Given a closed smooth manifold $X$,
  let $g^{\prime}$ be a Riemannian metric
   defined on a dense open submanifold $X^{\prime}$ of $X$
   with the property that
   \begin{itemize}
    \item[(1)]
     $g^{\prime}$ does not extend to a Riemannian metric on $X$,

    \item[(2)]
     there exist a Riemannian metric $g$ on $X$ and a constant $c>1$
      such that
      $$
       \frac{1}{c^2}\,g\;\le\; g^{\prime}\;\le\; c^2\,g
       \hspace{2em}\mbox{on $\;X^{\prime}$}\,.
      $$
   \end{itemize}
 Let $Z:= X-X^{\prime}$.
 We say that
  $g^{\prime}$ is a
   {\it Riemannian metric on $X$ with tame curvature singularity}
   supported on $Z$.
 In other words, while $g^{\prime}$ doesn't extend to $X$,
  it admits a quasi-conformal deformation with uniformly bounded dilatation
  that extends to a Riemannian metric on $X$.
 We call $(X,g^{\prime})$
  a closed {\it Riemannian manifold with tame curvature singularity}.
}\end{definition}

The following lemma is the counterpart of [Jo3: I.~Theorem~2.17]
 in our situation.

\begin{lemma}
{\bf [lower bound for first eigenvalue $\lambda_1$ of Laplacian].}
 Let
  $(X,g^{\prime})$
   be a (connected) closed Riemannian $m$-manifold
   with tame curvature singularity  and
  $X^{\prime}$ be a (connected) dense open submanifold of $X$
   on which $g^{\prime}$ is defined  and
   that there exists an exhausting sequence
    $\, X^{\prime}_1\,\subset\, X^{\prime}_2\,\subset\,
      \cdots\cdots\,\subset\, X^{\prime}$  of (connected)
    embedded compact submanifolds-with-smooth-boundary of dimension $m$
    with $\,\bigcup_{i=1}^{\infty}\,X^{\prime}_i\; =\; X^{\prime}\,$.
 Then
  there exists a constant $C>0$ such that
   whenever $u\in C^2_{cs}(X^{\prime})$
    with $\int_{X^{\prime}}u\,dV_{g^{\prime}}=0$,
   one has
   $$
    \|u\|_{L^2}\;
     \le\; C\,\|du\|_{L^2}\; \le\; C^2\,\|\Delta u\|_{L^2}\,.
   $$
\end{lemma}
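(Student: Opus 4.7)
The strategy is to reduce the Poincar\'{e}-type inequality for $g^{\prime}$ to the standard Poincar\'{e} inequality on the compact smooth Riemannian manifold $(X,g)$, using the bi-Lipschitz comparison $c^{-2}g\le g^{\prime}\le c^2 g$ to transfer norms. First, since $u\in C^2_{cs}(X^{\prime})$, its support lies in some $X^{\prime}_i$, so $u$ extends by zero to a $C^2$ function on $X$ (vanishing in an open neighbourhood of $Z$); I shall denote the extension again by $u$. Write $\rho:=dV_{g^{\prime}}/dV_g$, which is defined on $X^{\prime}$ and satisfies $c^{-m}\le\rho\le c^m$ by the bi-Lipschitz hypothesis. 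All norms on $X$ in the $g$-metric are then comparable to the $g^{\prime}$-norms via uniform constants depending only on $c$ and $m$; in particular
\[
 \|u\|_{L^2(g)}\,\le\,c^{m/2}\,\|u\|_{L^2(g^{\prime})}\,,\qquad
 \|du\|_{L^2(g^{\prime})}\,\ge\,c^{-(m+2)/2}\,\|du\|_{L^2(g)}\,,
\]
and similarly for the reverse directions.

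The main subtlety is the mismatch of mean-zero conditions: $\int u\,dV_{g^{\prime}}=0$ does not entail $\int u\,dV_g=0$. To handle this, set $c_0:=\mathrm{vol}_g(X)^{-1}\int_X u\,dV_g$, so that $u-c_0$ has $g$-mean zero. The compactness of $(X,g)$ yields a standard Poincar\'{e} inequality, giving a constant $C_g$ with
\[
 \|u-c_0\|_{L^2(g)}\,\le\,C_g\,\|du\|_{L^2(g)}\,.
\]
To bound $c_0$ itself in terms of $\|du\|_{L^2(g)}$ (and not in terms of $\|u\|_{L^2(g)}$, which would be circular), I use the identity
\[
 c_0\,\mathrm{vol}_{g^{\prime}}(X)\;=\;\int_X (u-c_0)(1-\rho)\,dV_g\,,
\]
obtained by subtracting $\int u\rho\,dV_g=\int u\,dV_{g^{\prime}}=0$ from $\int u\,dV_g$ and correcting by $c_0\int(1-\rho)\,dV_g$. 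Applying Cauchy--Schwarz and the Poincar\'{e} bound on $u-c_0$ gives
\[
 |c_0|\,\le\,\frac{\|1-\rho\|_{L^2(g)}}{\mathrm{vol}_{g^{\prime}}(X)}\cdot C_g\,\|du\|_{L^2(g)}\,,
\]
with $\|1-\rho\|_{L^\infty}\le\max(c^m-1,\,1-c^{-m})$ and $\mathrm{vol}_{g^{\prime}}(X)\ge c^{-m}\mathrm{vol}_g(X)$ under control. The triangle inequality $\|u\|_{L^2(g)}\le\|u-c_0\|_{L^2(g)}+|c_0|\,\mathrm{vol}_g(X)^{1/2}$ therefore yields $\|u\|_{L^2(g)}\le C^{\prime}_g\,\|du\|_{L^2(g)}$ with $C^{\prime}_g$ depending only on $c,\,m,$ and intrinsic $g$-data of $X$. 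Transferring back to the $g^{\prime}$-norms via the comparisons above produces a constant $C$ with $\|u\|_{L^2(g^{\prime})}\le C\,\|du\|_{L^2(g^{\prime})}$.

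For the second inequality, since $u$ is $C^2$ and compactly supported in $X^{\prime}$, integration by parts (with no boundary contributions) on $(X^{\prime},g^{\prime})$ gives
\[
 \|du\|_{L^2(g^{\prime})}^2\;=\;\int_{X^{\prime}} u\,\Delta u\,dV_{g^{\prime}}\;\le\;\|u\|_{L^2(g^{\prime})}\,\|\Delta u\|_{L^2(g^{\prime})}\;\le\;C\,\|du\|_{L^2(g^{\prime})}\,\|\Delta u\|_{L^2(g^{\prime})}\,,
\]
hence $\|du\|_{L^2(g^{\prime})}\le C\,\|\Delta u\|_{L^2(g^{\prime})}$, which when chained with the first inequality gives $\|u\|_{L^2}\le C\|du\|_{L^2}\le C^2\|\Delta u\|_{L^2}$, as claimed.

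The main obstacle is the bound on $c_0$: a na\"ive Cauchy--Schwarz on $\int u(1-\rho)\,dV_g$ recovers $|c_0|$ only in terms of $\|u\|_{L^2(g)}$, producing a circular estimate. The trick of first applying Poincar\'{e} to the mean-zero piece $u-c_0$ and then re-expressing $c_0$ in terms of $(u-c_0)(1-\rho)$ is what breaks the loop; the exhausting sequence $\{X^{\prime}_i\}$ plays only the supporting role of guaranteeing that $C^2_{cs}(X^{\prime})$-functions extend by zero to globally $C^2$ functions on $X$, so that the Poincar\'{e} inequality on the smooth compact $(X,g)$ is directly applicable.
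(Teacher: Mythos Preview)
Your argument is correct, but it proceeds along a genuinely different line from the paper's. The paper does not work directly on the closed manifold $(X,g)$; instead it uses the exhausting sequence $\{X^{\prime}_i\}$ in an essential way. For each $i$ it compares, via the Minimax Principle, the Dirichlet first eigenvalue $\lambda_{1,(X^{\prime}_i,g^{\prime})}$ to $\lambda_{1,(X^{\prime}_i,g)}$, and then uses domain monotonicity $\lambda_{1,(X,g)}\le\lambda_{1,(X^{\prime}_i,g)}$ to obtain the uniform lower bound $\lambda_{1,(X^{\prime}_i,g^{\prime})}\ge c^{-(2m+2)}\lambda_{1,(X,g)}$. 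Since any $u\in C^2_{cs}(X^{\prime})$ is supported in some $X^{\prime}_i$, expanding $u$ in Dirichlet eigenfunctions on $(X^{\prime}_i,g^{\prime})$ yields the inequalities directly, with the explicit constant $C=c^{m+1}/\sqrt{\lambda_{1,(X,g)}}$. In this argument the mean-zero hypothesis is essentially not used (the Dirichlet spectrum is already bounded away from zero), and the exhausting sequence is the backbone of the proof.

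Your route is more elementary in that it avoids the Minimax comparison and the Dirichlet eigenfunction expansion, relying only on the standard Poincar\'e inequality on the smooth closed manifold $(X,g)$ together with the algebraic identity $c_0\,\mathrm{vol}_{g^{\prime}}(X)=\int (u-c_0)(1-\rho)\,dV_g$ to control the mean-mismatch. This is a nice device and gives a self-contained argument; the price is a slightly less explicit constant (yours carries an additional factor from $\|1-\rho\|_{L^2(g)}$ and the volumes). One small correction: the exhausting sequence plays \emph{no} role in your proof---any $u\in C^2_{cs}(X^{\prime})$ already extends by zero to a $C^2$ function on $X$ by definition of compact support in an open set---so your closing remark about its role should be dropped. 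You also implicitly use that $Z=X\setminus X^{\prime}$ has $g$-measure zero (so that $\rho$ and $\mathrm{vol}_{g^{\prime}}(X)$ make sense in the displayed identity); this holds in the intended application but is worth stating.
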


\begin{proof}
 Let $g$ be a Riemannian metric on $X$ such that
  $\,\frac{1}{c^2}\,g\,\le\, g^{\prime}\,\le\, c^2\,g\,$ on $X^{\prime}$
  for some $c>1$.
 Then it follows from the Minimax Principle for the eigenvalues
  of Laplacians
  that
  $$
   0\;<\;
   \frac{1}{c^{2m+2}}\,\lambda_{k, (X,g)}\;
   \le\; \frac{1}{c^{2m+2}}\,\lambda_{k, (X^{\prime}_i, g)}\;
   \le\; \lambda_{k,(X^{\prime}_i, g^{\prime})}\,,
  $$
  for all $k\ge 1$ and $i\ge 1$.
 This gives in particular a positive uniform lower bound
  $\lambda_{1,(X,g)}/c^{2m+2}$
  for all the first eigenvalues $\lambda_{1,(X^{\prime}_i,g^{\prime})}$
   of the Laplacians on $(X^{\prime}_i,g^{\prime})$.
 Since
   $C^2_{cs}(X^{\prime})=\bigcup_{i=1}^{\infty}\,C^2_{cs}(X^{\prime}_i)$,
  as done in
   [Jo3: I.~Proof of Theorem~2.17, last two paragraphs],
  taking the set of normalized eigenfunctions of the Laplacian
     $\Delta_{(X^{\prime}_i,g^{\prime})}$
    as an orthonormal basis in the Hilbert space $V_i$
    from completing
    $\left\{ u\in C^2_{cs}(X^{\prime\,\circ}_i)\,:\,
                     \int_{X^{\prime}_i}\,u\,=\,0 \right\}$
    in the $L^2$-norm
   gives then the inequalities in Lemma
    with $\,C= c^{m+1}\!\!\left/\lambda_{1,(X,g)}^{1/2}\right.\,$.
 This completes the proof.

\end{proof}

\begin{example}
{\bf [branched covering of $S^3$].} {\rm
 Let
  $(S^3, g_0)$ be a Riemannian $3$-sphere and
  $f:X\rightarrow S^3$ be a (connected smooth) branched covering of $S^3$
   of finite degree.
 Assume that
    the branch locus $\underline{\Gamma}\subset S^3$
      is a smooth link in $S^3$  and
    so is the branch locus $\Gamma:=f^{-1}(\underline{\Gamma})$ in $X$,
  and that $\Gamma\simeq\underline{\Gamma}$ under $f$.

 \bigskip

 \noindent
 $(a)$ {\sl Claim.} {\it
  $(X,f^{\ast}g_0)$ is a Riemannian manifold
  with tame curvature singularity supported on $\Gamma$.}

 \bigskip

 \noindent
 {\it Proof.}
  Since any two Riemannian metrics on a closed smooth manifold
   are quasi-conformal with uniformly bounded dilation,
  without loss of generality and through a partition of unity
   we may assume that the metric $g_0$ on $S^3$ has
   the additional property that there exists a tubular neighborhood
   $N_{\epsilon}(\underline{\Gamma})$ of $\underline\Gamma\subset S^3$
   such that $g_0|_{N_{\epsilon}(\underline{\Gamma})}$ is isometric
   to a finite disjoint union
    $\amalg_{i\in H_0(\underline{\Gamma};{\Bbb Z})}\underline{N}_i$
    of the solid torus
    $\underline{N}_i\simeq D^2_{\epsilon}\times S^1$
    with the flat product metric.
  Here, $D^2_{\epsilon}$ is a closed disk at the origin
   of radius $\epsilon$ in the standard flat ${\Bbb R}^2$.
  Each connected component $N_i$, $i\in H_0(\Gamma;{\Bbb Z})$, of
    $f^{-1}(N_{\epsilon}(\underline{\Gamma}))$
   is diffeomorphic to a solid torus $D^2\times S^1$ as well.
  With an appropriate choice of local coordinates on $N_i-\Gamma$,
   we may assume that
   $$
    \begin{array}{cccccc}
     f & : & N_i\,-\,\Gamma & \longrightarrow & \underline{N}_i\\
     && (r,\,\phi,\, \theta) & \longmapsto
      & (\underline{r},\, \underline{\phi},\, \underline{\theta})\;
        =\; (r,\, m_i\phi,\, \theta) &,
    \end{array}
   $$
  where
   $m_i\ge 2$ is the branching index of $f$
    along $\Gamma\cap N_i$;
   $(r, \phi)$, $(\underline{r},\underline{\phi})$
    are the polar coordinates of (the flat) $D^2$, $D^2_{\epsilon}$
    respectively; and
   $\theta\in {\Bbb R}/2\pi\simeq S^1$.
  In terms of this,
   $g_0|_{\underline{N}_i}$ is given by
    $\,ds^2_0\,=\, d\underline{r}^2\,
       +\,\underline{r}^2d\underline{\phi}^2\,
       +\,\frac{l_i^2}{4\pi^2}d\underline{\theta}^2\,$,
     where $l_i$ is the length of
      $\underline{\Gamma}\cap \underline{N}_i$ in $(S^3,g_0)$,
      and
   $(f^{\ast}g_0)|_{N_i-\Gamma}$ is given by
    $$
     d{s^{\prime}}^2\;
      =\;  dr^2\,+\,m_i^2r^2d\phi^2\,
                 +\,\frac{l_i^2}{4\pi^2}d\theta^2\;
      =\; (dr)^2\,+\, (m_ird\phi)^2\,
                  +\,\left( \frac{l_i}{2\pi}d\theta\right)^2\;
      =:\; \omega_r^2\,+\, \omega_{\phi}^2\,+\,\omega_{\theta}^2\,.
    $$
  Let $h_i:(0,\epsilon)\rightarrow {\Bbb R}^+$
   be an increasing smooth function
    with $h|_{(0,\,\epsilon/3)}$ the constant $1/m_i$ and
         $h|_{(2\epsilon/3,\,\epsilon)}$ the constant $1$.
  This defines a smooth function, still denoted by $h_i$, on $N_i-\Gamma$
   by identifying $(0,\epsilon)$ as the $r$-coordinate.
  Let
   $g^{\prime}:= f^{\ast}g_0$ on $X^{\prime}:=X-\Gamma$ and
   $g$ be the metric on $X^{\prime}$ defined by
   $$
    ds^2\;=\; \left\{
     \begin{array}{ccl}
      g^{\prime}
       && \mbox{on $\; X^{\prime}-N_{\epsilon}(\Gamma)$}\,,\\[1.2ex]
      \omega_r^2\,+\, \left( h_i\cdot\omega_{\phi}\right)^2\,
                  +\,\omega_{\theta}^2
       && \mbox{on $\; N_i-\Gamma\,$, $\,i\in H_0(\Gamma;{\Bbb Z})$}\,.
     \end{array}
              \right.
   $$
  Let $c:=\max_i\{m_i\}$.
  Then, by construction,
   $\,\frac{1}{c^2}g\,\le\, g\, \le g^{\prime}\,\le \, c^2g\,$
    on $X^{\prime}$.
  Furthermore,
   $g$ on $X^{\prime}$ extends to a Riemannian metric on $X$
   without singularity.
  This proves the claim.

 \noindent\parbox{16cm}{\raggedleft $\square\hspace{3em}$}

 \bigskip

 \noindent $(b)$
 Define
  $\,X^{\prime}_j\,:=\,
   \{p \in X^{\prime}\,:\,
       \distance_{g^{\prime}}(x,\Gamma)\ge 1/(j+j_0)\}\,$,
  $j\in {\Bbb N}$.
 For $j_0\in {\Bbb N}$ large enough,
  $\,X^{\prime}_1\subset X^{\prime}_2\subset\,\cdots\cdots\,$
  are connected compact $3$-manifolds-with-smooth-boundary
  that exhaust $X^{\prime}\,$:
  $\,\bigcup_j\,X^{\prime}_j\,=\,X^{\prime}$.
 It follows thus from Claim in Part $(a)$ and Lemma~2.1.2 that:
  %
  \begin{itemize}
   \item[$\cdot$]
    {\it There exists a constant $C>0$ such that
        whenever $u\in C^2_{cs}(X^{\prime})$
      with $\int_{X^{\prime}}u\,dV_{g^{\prime}}=0$,\\
     one has
     $\,\|u\|_{L^2}\,
        \le\, C\,\|du\|_{L^2}\, \le\, C^2\,\|\Delta u\|_{L^2}\,$.}
  \end{itemize}
}\end{example}

\bigskip

\subsection{A class of embedded special Lagrangian submanifolds in the
    flat Calabi-Yau 3-fold ${\Bbb R}^2\times S^1\times {\Bbb R}^3$.}

Let $Y^{\prime}:={\Bbb R}^2\times S^1\times {\Bbb R}^3$
 be the flat Calabi-Yau 3-fold with
  \begin{itemize}
   \item[$\cdot$]
    the real coordinates $(u_1, u_2, u_3, v_1, v_2, v_3)$,
          where $u_1, u_2, v_1, v_2, v_3\in {\Bbb R}$ and
                                $u_3\in {\Bbb R}/l)$,

   \item[$\cdot$]
    the complex structure $J^{\prime}$
     specified by the complex coordinates $(z_1, z_2, z_3)$ with\\
      $z_1=u_1+\sqrt{-1}v_1$, $\, z_2=u_2+\sqrt{-1}v_2$, and
      $\, z_3=u_3+\sqrt{-1}v_3$,

   \item[$\cdot$]
    the K\"{a}hler form
     $\omega^{\prime}
      =\frac{\sqrt{-1}}{2}
       (dz_1\wedge d\bar{z}_1 + dz_2\wedge d\bar{z}_2
                              + dz_3\wedge d\bar{z}_3)$,
     which specifies the K\"{a}hler metric
      $g^{\prime}=|dz_1|^2+|dz_2|^2+|dz_3|^2$, and

   \item[$\cdot$]
    the holomorphic $3$-form
     $\Omega^{\prime}=dz_1\wedge dz_2 \wedge dz_3$,
     which gives the calibration $\Real\Omega^{\prime}$.
  \end{itemize}
We'll denote $(Y^{\prime}, J^{\prime}, \omega^{\prime}, \Omega^{\prime})$
 also collectively by $Y^{\prime}$.

\bigskip

\begin{flushleft}
{\bf A class of embedded special Lagrangian submanifolds of $Y^{\prime}$
     via rolling isotropic submanifolds.}
\end{flushleft}
As Calabi-Yau manifolds,
 $Y^{\prime}$ is isomorphic to the product
  $Y^{\prime\prime}\times Y^{\prime\prime\prime}
   := {\Bbb R}^4\times (S^1\times {\Bbb R})$
 of a Calabi-Yau 2-fold and a Calabi-Yau 1-fold,
  where
   ${\Bbb R}^4$ is the $(u_1,u_2,v_1,v_2)$-coordinate subspace and
   $S^1\times {\Bbb R}^1$ is the $(u_3, v_3)$-coordinate subspace,
   with the induced Calabi-Yau manifold structures
    $(J^{\prime\prime}, \omega^{\prime\prime}, \Omega^{\prime\prime})$
     and
    $(J^{\prime\prime\prime}, \omega^{\prime\prime\prime},
      \Omega^{\prime\prime\prime})$
     respectively.
Note that $Y^{\prime\prime}$ is hyperK\"{a}hler.
Thus,
 special Lagrangian submanifolds in $Y^{\prime\prime}$
  can be obtained by
   introducing a new complex structure $\hat{J}^{\prime\prime}$
    on $Y^{\prime\prime}$ --
     with the associated complex coordinates given by
     $\,(\hat{z}_1,\,\hat{z}_2)\,
      =\,(u_1+\sqrt{-1}u_2,\,v_1-\sqrt{-1}v_2)\,$ --
    via a hyperK\"{a}hler rotation  and
   taking smooth holomorphic curves $C$ in
    $\hat{Y}^{\prime\prime}
     :=(Y^{\prime\prime}, \hat{J}^{\prime\prime})$.
Such $C$'s are isotropic submanifolds in
 $Y^{\prime\prime}\times Y^{\prime\prime\prime}$
  (with the original complex structure
        $(J^{\prime\prime}, J^{\prime\prime\prime})$).
$C\times S^1\times\{a\}$, for $a\in {\Bbb R}$, give then
 a class of embedded special Lagrangian submanifolds
 in $Y^{\prime\prime}\times Y^{\prime\prime\prime}$
 and, hence, in $Y^{\prime}$.

\begin{remark} {\it $[$basic expression under hyperK\"{a}hler rotation$]$.}
{\rm
 For later use,
  note that in terms of the hyperK\"{a}hler rotated complex coordinates
   $(\hat{z_1}, \hat{z_2})$ on ${\Bbb R}^4$,
  $$
  \begin{array}{rcl}
   \frac{\sqrt{-1}}{2}(dz_1\wedge d\bar{z}_1+dz_2\wedge d\bar{z}_2)
    & =
    & \frac{1}{2}(d\hat{z}_1\wedge d\hat{z}_2
                  + d\bar{\hat{z}}_1\wedge d\bar{\hat{z}}_2)\;\;
      =\;\; \Real(d\hat{z}_1\wedge d\hat{z}_2)\,,               \\[2ex]
   dz_1\wedge dz_2   & =
    & \frac{\sqrt{-1}}{2}( d\hat{z}_1\wedge d\bar{\hat{z}}_1
                           + d\hat{z}_2\wedge d\bar{\hat{z}}_2 )\,
      -\,\frac{1}{2}( d\hat{z}_1\wedge d\hat{z}_2
                    - d\bar{\hat{z}}_1\wedge d\bar{\hat{z}}_2 ) \\[1.2ex]
    & = & -\frac{1}{2}
            \Imaginary( d\hat{z}_1\wedge d\bar{\hat{z}}_1
                        + d\hat{z}_2\wedge d\bar{\hat{z}}_2 )\,
          -\,\sqrt{-1}\Imaginary(d\hat{z}_1\wedge d\hat{z}_2)\,.
  \end{array}
  $$
}\end{remark}

\bigskip

\begin{flushleft}
{\bf Deformations of a branched covering of the special Lagrangian
      ${\Bbb R}^2\times S^1\times\{\mbox{\boldmath $0$}\}$
     to embedded special Lagrangian submanifolds of $Y^{\prime}$.}
\end{flushleft}
Consider the embedded special Lagrangian submanifold
  $L^{\prime}:={\Bbb R}^2\times S^1\times\{\mbox{\boldmath $0$}\}$,
   where {\boldmath $0$}$\,\in {\Bbb R}^3$ is the origin,
  of $Y^{\prime}$.
In terms of the ${\Bbb C}$-${\Bbb R}$-valued coordinates
 $(\hat{z}_1,u_3,\hat{z}_2,v_3)$ on $Y^{\prime}$
 (and hence on $L^{\prime}$ as well),
let
 $$
  \begin{array}{ccccccc}
   \psi &: &  L\,\,\simeq {\Bbb R}^2\times S^1
        & \longrightarrow & Y^{\prime}           \\[1.2ex]
   && (x_1,x_2,x_3) & \longmapsto
    & ( (x_1+\sqrt{-1}x_2)^m\,,\, x_3,\, 0,\, 0)
  \end{array}
 $$
 be a branched covering of $L^{\prime}$ of degree $m\ge 2$
 with branch locus
  $\Gamma=\{(0,0)\}\times S^1 \subset L$ and
  $\Gamma^{\prime}
    = \{0\}\times S^1\times \{(0,0)\} \subset L^{\prime}$
  respectively.
This is an immersion in codimension $1$
 (cf.\ [L-Y6: Definition~2.3.8] (D(7))).
It can be deformed to smooth special Lagrangian embeddings
 by\footnote{$\psi^a$ can be defined alternatively by
              $\,(x_1,x_2,x_3)\; \longrightarrow\;
                 (\,  a^{-1/m}\,(x_1+\sqrt{-1}x_2)^m\,,\, x_3,\,
                     (x_1+\sqrt{-1}x_2),\, 0\,)$.
              This has the same image special Lagrangian submanifold
               $L^{\prime,a}$ but now with
               $D\psi^a(0,0,x_3)$ independent of $a$.
              While even the $C^0$-convergence
               of the alternative $\psi^a$ to $\psi$ fails
                for $\psi^a$ thus alternatively defined,
              $\psi^a\rightarrow \psi$, as $a\rightarrow 0$,
               remains to hold in the sense of currents.}
 $$
  \begin{array}{ccccccc}
   \psi^a &: &  L\,\,\simeq {\Bbb R}^2\times S^1
        & \longrightarrow & Y^{\prime}           \\[1.2ex]
   && (x_1,x_2,x_3) & \longmapsto
    & \left(\, (x_1+\sqrt{-1}x_2)^m\,,\, x_3,\,
               a^{1/m}\,(x_1+\sqrt{-1}x_2),\, 0\, \right)\,,
  \end{array}
 $$
 for $a>0$.
$\,\psi^a$ embeds $L$ into $Y^{\prime}$
 as the smooth embedded special Lagrangian submanifold
  $$
   L^{\prime, a}\; :=\; \{ a\hat{z}_1-\hat{z}_2^m=0,\, v_3=0 \}\;
                    \subset\; Y^{\prime}\,.
 $$
Then as $a\rightarrow 0$, $\psi^a$ is $C^{\infty}$-convergent
 to $\psi=:\psi^0$ on any compact subset $L$.
One can also interpret this convergence
 in the sense of currents on $Y^{\prime}$.
(In notation,
 $L^{\prime,a}\rightarrow mL^{\prime}$ as $a\rightarrow 0$.)

Let
 $\,L^{\prime,\diamond}:=L^{\prime}\cap \{\hat{z}_1\ne 0\}\,$,
 $\,L^{\prime,a,\diamond}:=L^{\prime,a}\cap \{\hat{z}_1\ne 0\}\,$
  for $a>0$,
 $\,L^{\diamond}:=\{(x_1,x_2)\ne (0,0)\}\,
   (\simeq ({\Bbb R}^2-\{(0,0)\})\times S^1)\,\subset L\,$, and
 $\,Y^{\prime,\diamond}
   := \{\hat{z}_1\ne 0\}\,
   (\simeq ({\Bbb R}^2-\{(0,0)\})\times S^1\times {\Bbb R}^3)\,
   \subset Y^{\prime}\,$
   with the induced Calabi-Yau structure still denoted
   by $(J^{\prime}, \omega^{\prime}, \Omega^{\prime})$.
Then
 $L^{\prime,a,\diamond}$ is an open dense subset of $L^{\prime,a}$
  that can be expressed
  via the graph of an exact $1$-form on $L^{\diamond}$
 as follows.
First, observe that
 since
  $\omega^{\prime}=du_1\wedge dv_1+du_2\wedge dv_2+ du_3\wedge dv_3$,
 the map
 $$
  \begin{array}{cccccc}
   \Phi^{\prime} & : & T^{\ast}L^{\prime,\diamond}
    & \longrightarrow & Y^{\prime,\diamond}  \\[1.2ex]
   && (u_1, u_2, u_3, p_{u_1}, p_{u_2}, p_{u_3})
    & \longmapsto
    & (u_1+\sqrt{-1}u_2,\, u_3,\,
       p_{u_1}-\sqrt{-1}p_{u_2},\, p_{u_3})  &,
  \end{array}
 $$
 where
  $(u_1, u_2, u_3, p_{u_1}, p_{u_2}, p_{u_3})$ corresponds to
  $p_{u_1}du_1+p_{u_2}du_2+p_{u_3}du_3
              \in T^{\ast}_{(u_1,u_2,u_3)}L^{\prime,\diamond}$,
 is a symplectomorphism with
  $\Phi^{\prime, \ast}\omega^{\prime}=\omega^{\prime}_{can}$.
Precomposing $\Phi^{\prime}$
 with the symplectic covering map
  $f_{!}: T^{\ast}L^{\diamond}\rightarrow T^{\ast}L^{\prime,\diamond}$
  that is canonically induced by the covering map
  $f:L^{\diamond}\rightarrow L^{\prime,\diamond}$,
one obtains a covering map
 $$
  \begin{array}{l}
   \hspace{8em}
   \Phi\;:=\; \Phi^{\prime} \circ f_{!}\; :\;
    T^{\ast}L^{\diamond} \hspace{1em}
    \longrightarrow  \hspace{1em} Y^{\prime,\diamond}   \\[1.2ex]
   (x_1\,,\, x_2\,,\, x_3\,,\,
    p_{x_1}\,,\, p_{x_2}\,,\, p_{x_3})                  \\[1.2ex]
   \hspace{4em}
    \longmapsto\hspace{1em}
     \left(
     (x_1+\sqrt{-1}x_2)^m\,,\, x_3\,,\,
     \frac{1}{m}\,(p_{x_1}-\sqrt{-1}p_{x_2})\,(x_1+\sqrt{-1}x_2)^{1-m}\,,\,
     p_{x_3}
    \right)
  \end{array}
 $$
 of degree $m$ such that $\Phi^{\ast}\omega^{\prime}=\omega_{can}$.
By construction,
 $L^{\prime,a,\diamond}$ lifts under $\Phi$ to a smooth section
 $\tilde{L}^{\prime,a,\diamond}$ of $T^{\ast}L^{\diamond}$
 over $L^{\diamond}$ that is Lagrangian with respect to
 $\omega_{can}
  =dx_1\wedge dp_{x_1} + dx_2\wedge dp_{x_2} + dx_3\wedge dp_{x_3}$.
Explicitly,
 $$
  \begin{array}{l}
   \Phi^{-1}(L^{\prime,a,\diamond})\;=\;
    \left\{
     (p_{x_1}-\sqrt{-1}p_{x_2})\,
      -\,m\,a^{1/m}\,(x_1+\sqrt{-1}x_2)^m\,
                     e^{2\pi\sqrt{-1}\,k/m}\;=\;0\,,
    \right.                                           \\[1.2ex]
   \hspace{16em}
    p_{x_3}\,=\,0 \hspace{6em}\left.\rule{0ex}{1.2em}
                  :\: k\,=\,0,\,\cdots\,,\,m-1 \right\}
  \end{array}
 $$
In the following, we choose $\tilde{L}^{\prime,a,\diamond}$
 that corresponds to the component with $k=0$.

\begin{lemma}
{\bf [$L^{\prime,a,\diamond}$ via exact 1-form on $L$].}
 $\tilde{L}^{\prime,a,\diamond}$ extends to a smooth section
  of $T^{\ast}L$ that corresponds to the exact $1$-form $dh^a$ with
  $h^a := \frac{m}{m+1}\,a^{1/m}\,
          \Real\left( (x_1+\sqrt{-1}x_2)^{m+1} \right)\,
   \in\, C^{\infty}(L)$.
\end{lemma}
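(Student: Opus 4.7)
The plan is to exhibit $h^a$ explicitly as a potential for the $1$-form cutting out $\tilde{L}^{\prime,a,\diamond}$ and then invoke its polynomial nature to extend across the branch locus. First, I would read off from the displayed description of $\Phi^{-1}(L^{\prime,a,\diamond})$ that the component indexed by $k=0$ is, on $L^{\diamond}$, the graph of the smooth $1$-form
$$\alpha^a\;=\; p_{x_1}\,dx_1\,+\,p_{x_2}\,dx_2\,+\,p_{x_3}\,dx_3
\hspace{2em}\text{with}\hspace{1em}
p_{x_1}-\sqrt{-1}\,p_{x_2}\;=\;m\,a^{1/m}\,(x_1+\sqrt{-1}\,x_2)^m\,,
\;\;p_{x_3}=0\,.$$
The task then reduces to finding a smooth function $h^a$ on $L$ whose exterior derivative equals $\alpha^a$ on $L^{\diamond}$.

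Next, I would rewrite the defining equation in complex form. Setting $z:=x_1+\sqrt{-1}\,x_2$ and using the Wirtinger operator $\partial/\partial z=\frac{1}{2}(\partial/\partial x_1-\sqrt{-1}\,\partial/\partial x_2)$, the condition becomes
$$2\,\frac{\partial h^a}{\partial z}\;=\;\frac{\partial h^a}{\partial x_1}-\sqrt{-1}\,\frac{\partial h^a}{\partial x_2}\;=\;p_{x_1}-\sqrt{-1}\,p_{x_2}\;=\;m\,a^{1/m}\,z^m\,,$$
together with $\partial h^a/\partial x_3=0$. Seeking a real-valued $h^a$ of the form $h^a=\operatorname{Re} F(z)$ with $F$ holomorphic, the equation $2\,\partial h^a/\partial z=F^{\prime}(z)$ forces $F^{\prime}(z)=m\,a^{1/m}\,z^m$, hence $F(z)=\tfrac{m}{m+1}\,a^{1/m}\,z^{m+1}$, giving exactly the proposed
$$h^a\;=\;\tfrac{m}{m+1}\,a^{1/m}\,\operatorname{Re}\bigl((x_1+\sqrt{-1}\,x_2)^{m+1}\bigr)\,.$$

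Finally, I would verify by direct differentiation that $dh^a$ has the right components (matching real and imaginary parts of $2\,\partial h^a/\partial z=m\,a^{1/m}\,z^m$ recovers $p_{x_1},p_{x_2}$, and independence of $x_3$ gives $p_{x_3}=0$), so $dh^a=\alpha^a$ on $L^{\diamond}$. Since $h^a$ is a real polynomial in $x_1,x_2$ — in particular, smooth on the whole of $L\simeq{\Bbb R}^2\times S^1$, including the branch locus $\Gamma=\{(0,0)\}\times S^1$ — the section $\tilde{L}^{\prime,a,\diamond}\subset T^{\ast}L^{\diamond}$ extends to the graph of $dh^a$ over all of $L$, as claimed. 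There is no real obstacle here: the single substantive point is the identification of the complex equation with a Wirtinger potential, after which smoothness across $\Gamma$ is automatic from polynomiality.
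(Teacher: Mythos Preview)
Your proof is correct and follows essentially the same route as the paper's explicit computation: both identify the $1$-form from the defining equations and exhibit the polynomial potential $h^a$, whose smoothness on all of $L$ gives the extension. The only difference is that the paper prefaces this with a short topological argument (checking $\int_\gamma\alpha=0$ on generators of $H_1(L^\diamond;\mathbb{Z})$) to explain \emph{why} exactness should hold before computing, whereas you go straight to the potential via Wirtinger derivatives; since the explicit formula already proves exactness, your shortcut loses nothing.
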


\begin{proof}
 Since $\tilde{L}^{\prime,a,\diamond}$ is a smooth section of
  $T^{\ast}L^{\diamond}$ that is Lagrangian with respect to $\omega_{can}$,
 $\tilde{L}^{\prime,a,\diamond}$ is the graph of a closed $1$-form
  $\alpha$ on $L^{\diamond}$.
 To see that $\alpha$ is exact, one only needs to show that
  $\int_{\gamma}\alpha=0$ for any (smooth) closed loop $\gamma$
  in $L^{\diamond}$.
 Since
  $H_1(L^{\diamond}; {\Bbb Z})
    ={\Bbb Z}[\gamma_{(x_1,x_2)}] \oplus {\Bbb Z}[\gamma_{x_3}]$,
   where $\gamma_{(x_1,x_2)}$ is the (oriented) loop
    $\{ x_1^2+x_2^2=\epsilon^2, x_3=0\}$ for a small $\epsilon>0$
    and $\gamma_{x_3}$ is the (oriented) loop
     $\{x_1=\epsilon, x_2=0\}$ in $L^{\diamond}$,
   and
  $\int_{\gamma_{x_3}}\alpha=0$
   due to the fact that
    $\tilde{L}^{\prime,a,\diamond}$ lies in $\{p_{x_3}=0\}$,
 we only need to check that
   $\int_{\gamma_{(x_1,x_2)}}\alpha=0$.
 The latter follows immediately from the fact that,
  from the explicit equations for $\tilde{L}^{\prime,a,\diamond}\,$,
   $\,\tilde{L}^{\prime,a,\diamond}$ extends in $T^{\ast}L$
   to an embedded smooth Lagrangian submanifold
    that is realizable as a smooth section of $T^{\ast}L$ and
  that $[\gamma_{(x_1,x_2)}]=0$ in $H_1(L;{\Bbb Z})$.

 %

 Explicitly,
 let
  $$
   \lambda_{can}\;
    =\; p_{x_1}dx_1+p_{x_2}dx_2+p_{x_3}dx_3\;
    =\;\Real((p_{x_1}-\sqrt{-1}p_{x_2})(dx_1+\sqrt{-1}dx_2))\,
       +\, p_{x_3}dx_3
  $$
   be the canonical $1$-form on $T^{\ast}L$  and
  $\pi:T^{\ast}L\rightarrow L$ be the projection map.
 Then
  \begin{eqnarray*}
   \lefteqn{\alpha\;
    =\;\pi_{\ast}(\lambda_{can}|_{\tilde{L}^{\prime,a,\diamond}})\;
    =\; \Real\left(
         m\,a^{1/m}\,(x_1+\sqrt{-1}x_2)^m\,(dx_1+\sqrt{-1}dx_2)
             \right) }\\[1.2ex]
    && \hspace{7em}
       =\; \Real\left(
         \frac{m}{m+1}\,a^{1/m}\,d\left((x_1+\sqrt{-1}x_2)^{m+1}\right)
             \right)\,.
  \end{eqnarray*}
 This proves the lemma.

\end{proof}

%

\bigskip

\begin{notation}
{\bf [partial scaling on $Y^{\prime}$].} {\rm
 The partial scaling
   $t\cdot (\hat{z}_1, u_3, \hat{z_2}, v_3)
    := (t\hat{z}_1, u_3, t\hat{z_2}, v_3)$
   on $Y^{\prime}$ for $t>0$
  sends $L^{\prime, a}$ to $t\cdot L^{\prime, a}:= L^{\prime, at^{m-1}}$.
 For convenience, we denote $\psi^{at^{m-1}}$ also by $t\cdot \psi^a$.
 In particular, given $a>0$, then
  $t\cdot \psi^a\rightarrow \psi$ in the $C^{\infty}$-topology
  when $t\rightarrow 0$.
}\end{notation}

\bigskip

\subsection{Lagrangian Neighborhood Theorems.}

Recall first the following two Lagrangian Neighborhood Theorems:

\begin{theorem}
{\bf [immersed Lagrangian submanifold].}
 Let $(M,\omega)$ be a symplectic manifold and
  $f:N\rightarrow M$ be a compact immersed Lagrangian submanifold.
 Then there exist
  a neighborhood $U\subset T^{\ast}N$ of the zero-section  and
  an immersion $\Phi:U_N\rightarrow M$
  such that
   $\Phi|_N=f$ and
   $\Phi^{\ast}\omega=\omega_{can}$,
    where $\omega_{can}$ is the canonical symplectic form on $T^{\ast}N$.
\end{theorem}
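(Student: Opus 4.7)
The plan is to adapt the standard proof of Weinstein's Lagrangian Neighborhood Theorem (as in, e.g., McDuff--Salamon) to the immersed setting. The key observation is that the entire construction takes place on the abstract manifold $T^{\ast}N$, where non-injectivity of $f$ causes no difficulty provided one works sufficiently close to the zero section.

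First I would build a zeroth-order model. Pick a compatible almost complex structure $J$ on $M$ and let $g_M:=\omega(\,\cdot\,,\,J\,\cdot\,)$ be the associated Riemannian metric. For each $x\in N$, the subspace $f_{\ast}(T_xN)\subset T_{f(x)}M$ is Lagrangian, so the rule $\alpha\mapsto v_{\alpha}$, where $v_{\alpha}$ is the unique vector in the $g_M$-orthogonal complement $J(f_{\ast}T_xN)$ satisfying $\omega(v_{\alpha},\,f_{\ast}(\,\cdot\,))=\alpha$, defines a bundle isomorphism $T^{\ast}N\to J(f_{\ast}TN)\subset f^{\ast}TM$. Define $\Phi_0:T^{\ast}N\rightarrow M$ by $\Phi_0(x,\alpha):=\exp_{f(x)}^{g_M}(v_{\alpha})$. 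By the compactness of $N$ and a pointwise application of the inverse function theorem, there is an open neighborhood $U_0\subset T^{\ast}N$ of the zero section on which $\Phi_0$ is an immersion with $\Phi_0|_N=f$.

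Next I would apply Moser's trick. A direct calculation on the zero section (using that $f$ is Lagrangian and that $v_{\alpha}$ was defined through $\omega$) shows that $\Phi_0^{\ast}\omega$ and $\omega_{can}$ coincide as bilinear forms at each point of $N\subset T^{\ast}N$. Set $\omega_t:=(1-t)\omega_{can}+t\,\Phi_0^{\ast}\omega$. By the compactness of $N$ and continuity, there is a smaller neighborhood $U_1\subset U_0$ on which $\omega_t$ is non-degenerate for every $t\in[0,1]$. Since $U_1$ deformation-retracts onto $N$ and $(\Phi_0^{\ast}\omega-\omega_{can})|_N=0$, a relative Poincar\'{e} lemma produces a $1$-form $\sigma$ on $U_1$ with $d\sigma=\Phi_0^{\ast}\omega-\omega_{can}$ and $\sigma|_N=0$. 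Solving $\iota_{X_t}\omega_t=-\sigma$ for a time-dependent vector field $X_t$ (which vanishes on $N$) and integrating its flow on a possibly smaller neighborhood $U\subset U_1$ of $N$, one obtains a diffeomorphism $\psi:U\rightarrow\psi(U)\subset U_1$ fixing $N$ pointwise and satisfying $\psi^{\ast}\Phi_0^{\ast}\omega=\omega_{can}$ via Cartan's magic formula. Setting $\Phi:=\Phi_0\circ\psi$ then meets all requirements.

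The main obstacle is conceptual rather than technical: since $f$ is only an immersion, distinct points $x\ne y\in N$ may have $f(x)=f(y)$, so $\Phi_0$ fails to be injective and there is no honest tubular neighborhood of $f(N)$ in $M$ on which to carry out Moser's argument in the usual way. The crucial point that makes the theorem nonetheless go through is that the entire deformation argument is performed on $T^{\ast}N$, where $\Phi_0^{\ast}\omega$ is a well-defined $2$-form regardless of injectivity of $\Phi_0$; thus one only needs $\Phi_0$ to be a \emph{local} immersion near the zero section, which follows from the compactness of $N$. The size of $U$ is then dictated simply by the requirement that the flow of $X_t$ stay inside the region where $\omega_t$ remains non-degenerate.
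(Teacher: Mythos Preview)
Your proposal is correct and matches the paper's approach exactly: the paper does not give a detailed proof but simply remarks that the theorem ``follows from essentially the same proof as that for the embedded Lagrangian case ([McD-S] and [We]),'' and what you have written is precisely a careful rendering of that standard Weinstein--Moser argument, together with the observation that the Moser deformation lives entirely on $T^{\ast}N$ and so is unaffected by the failure of injectivity of $f$.
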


\begin{theorem}
{\bf [Lagrangian foliation].}
{\rm ([Jo3: I, Theorem 4.2] and [We: Theorem 7.1].)}
 Let
  $(M,\omega)$ be $2m$-dimensional symplectic manifold and
  $N\subset M$ an embedded $m$-dimensional submanifold.
 Suppose $\,\{L_x: x\in N\}\,$ is a smooth family of embedded, noncompact
   Lagrangian submanifolds in $M$ parameterized by $x\in N$
   such that for each $x\in N$
    we have $x\in L_x$ and $T_xL_x\cap T_xN=\{0\}$.
 Then there exist
  an open neighborhood $U$ of the zero-section $N$ in $T^{\ast}N$
   such that the fibers of the natural projection $\pi:U\rightarrow N$
    are connected, and
  a unique embedding $\Phi:U\rightarrow M$ with
   $\,\Phi(\pi^{-1}(x))\subset L_x\,$ for each $x\in N$,
   $\,\Phi|_N = \id_N: N\rightarrow N\,$ and
   $\,\Phi^{\ast}\omega\,=\, \omega_{can}+\pi^{\ast}(\omega|_N)\,$,
    where $\omega_{can}$
     is the canonical symplectic structure on $T^{\ast}N$.
\end{theorem}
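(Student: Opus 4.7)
The plan is to use the classical Moser deformation method, building first a coarse fiber-to-leaf identification $\Phi_0$ and then correcting it to a symplectomorphism. First, I would exploit the Lagrangian/transversality hypothesis to obtain a canonical linear identification $J_x : T_x L_x \to T^*_x N$ at every $x \in N$. Since $L_x$ is Lagrangian, $(T_x L_x)^\omega = T_x L_x$, and together with $T_x L_x \cap T_x N = \{0\}$ this forces the pairing $(v,w) \mapsto \omega(v,w)$ on $T_x L_x \times T_x N$ to be nondegenerate. Set $J_x(v) := \omega(v,\cdot)|_{T_x N}$; this family varies smoothly in $x$. Next, pick a smooth family of exponential-type parametrizations $\phi_x : B_x \subset T_x L_x \to L_x$ with $\phi_x(0) = x$ (say from a Riemannian metric on $M$ restricted to each leaf), and define
\[
 \Phi_0(\alpha) \;:=\; \phi_{\pi(\alpha)}\bigl( J_{\pi(\alpha)}^{-1}(\alpha) \bigr)
\]
on a neighborhood $U$ of the zero-section in $T^*N$. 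By construction $\Phi_0$ is a local diffeomorphism near $N$, with $\Phi_0|_N = \id_N$ and $\Phi_0(\pi^{-1}(x) \cap U) \subset L_x$.

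Second, I would check that $\omega_1 := \Phi_0^*\omega$ and $\omega_0 := \omega_{can} + \pi^*(\omega|_N)$ agree as $2$-forms along $N$. Using the splitting $T_x(T^*N) = T_x N \oplus T^*_x N$ at $x \in N$: the horizontal-horizontal parts of both give $\omega|_N$; the horizontal-vertical parts both give the canonical duality pairing, by the defining property of $J_x$ for $\omega_1$ and directly from $\omega_{can}$ for $\omega_0$; the vertical-vertical parts vanish because fibers of $\pi$ are Lagrangian for $\omega_{can}$ while leaves $L_x$ are Lagrangian for $\omega$. Shrink $U$ so that the linear interpolation $\omega_t := (1-t)\omega_0 + t\omega_1$ is symplectic for every $t \in [0,1]$.

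Third, apply a fiber-preserving Moser trick. From the radial contraction $r_t(x,p) := (x,tp)$ one obtains a primitive $\eta$ of $\omega_1 - \omega_0$, namely $\eta := \int_0^1 \tfrac{1}{t}\, r_t^*\bigl(\iota_{\mathcal{E}}(\omega_1 - \omega_0)\bigr)\,dt$, where $\mathcal{E}$ is the fiberwise Euler vector field. This $\eta$ vanishes on $N$ and annihilates vectors tangent to the fibers of $\pi$, because both $\omega_0$ and $\omega_1$ make fibers Lagrangian, so $\iota_{\mathcal{E}}(\omega_1-\omega_0)$ already kills vertical vectors. The time-dependent vector field $X_t$ determined by $\iota_{X_t}\omega_t = -\eta$ is then automatically vertical (since fibers are Lagrangian for $\omega_t$ and $\eta$ kills vertical vectors) and vanishes on $N$. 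Its flow $\psi_t$ therefore preserves fibers, fixes $N$ pointwise, and satisfies $\psi_1^*\omega_1 = \omega_0$. Setting $\Phi := \Phi_0 \circ \psi_1$ and shrinking $U$ once more to ensure injectivity and connected fibers yields the desired embedding.

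For uniqueness, if $\Phi$ and $\Phi'$ both satisfy the three conditions then $F := \Phi^{-1} \circ \Phi'$ is a fiber-preserving symplectomorphism of $(U, \omega_0)$ fixing the zero-section pointwise; in local canonical coordinates $(x^i, p_i)$ a fiber-preserving map has the form $F(x,p) = (x, g(x,p))$, and $g(x,0)=0$ together with $F^*\omega_{can} = \omega_{can}$ (which follows from $F^*\omega_0 = \omega_0$ together with $\pi\circ F = \pi$) forces $g(x,p) = p$, so $F = \id$. The main technical step is the construction of the horizontal primitive $\eta$ and the verification that the associated Moser vector field is vertical; both rest on the Lagrangianity of the fibers of $\pi$ as well as of the leaves $L_x$, which is precisely the content of the hypothesis.
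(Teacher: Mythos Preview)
Your Moser-type argument is correct and is essentially the standard proof of this result going back to Weinstein. The paper, however, does not supply its own proof of this theorem at all: immediately after stating it, the authors write that ``the second is stated in [Jo3:~I, Theorem~4.2] as a variation of [We:~Theorem~7.1]'' and move on. So there is nothing in the paper to compare your argument against---you have written out a proof where the authors simply cite one.

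A couple of very minor remarks on presentation. First, when you assert that $\eta$ annihilates vertical vectors, you justify it by saying $\iota_{\mathcal E}(\omega_1-\omega_0)$ kills vertical vectors; to complete the sentence you should also note that $r_t$ is fiber-preserving, so $r_t^\ast$ of a horizontal $1$-form remains horizontal, and hence the integral defining $\eta$ inherits the property. Second, in the horizontal--vertical comparison of $\omega_0$ and $\omega_1$ along $N$ there is a sign convention lurking in the definition of $J_x$ versus the sign convention for $\omega_{can}$; it is harmless (flip the sign of $J_x$ if needed) but worth flagging once. Neither of these affects the validity of the argument.
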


\noindent
The first theorem follows from essentially the same proof
 as that for the embedded Lagrangian case ([McD-S] and [We]) and
the second is stated in [Jo3: I, Theorem 4.2] as a variation of
 [We: Theorem 7.1].

\bigskip

\begin{flushleft}
{\bf Lagrangian Neighborhood Theorem for an embedded Lagrangian
     submanifold with a transverse Lagrangian distribution.}
\end{flushleft}
The following variation of Lagrangian Neighborhood Theorems
 is also needed in this work:

\begin{theorem}
{\bf [Lagrangian submanifold with transverse Lagrangian distribution].}
 Let
  $(M,\omega)$ be a $2m$-dimensional symplectic manifold,
  $N\subset M$ be a compact embedded Lagrangian submanifold, and
  $\Gamma_N\subset (T_{\ast}M)|_N$
   be a distribution of tangent $m$-planes in $M$ along $N$
   such that $\Gamma_x:=\Gamma_N|_x$
              is a Lagrangian subspace in $(T_xM, \omega_x)$
    and that $\,\Gamma_x\cap T_xD=\{0\}\,$ for all $x\in D$.
 Then there exist
   an open neighborhood $U\subset T^{\ast}N$
    of the zero-section and
   an embedding $\Phi:U\rightarrow M$
  such that
   $\,\Phi|_N=\id_N:N\rightarrow N\,$,
   $\,\Phi^{\ast}\omega=\omega_{can}\,$,
    where $\omega_{can}$
     is the canonical symplectic form on $T^{\ast}N$,  and
   $\,\Phi_{\ast}(T_0(T^{\ast}_xN))= \Gamma_x\,$ for all $x\in N$.
\end{theorem}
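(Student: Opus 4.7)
The strategy is the Moser trick used for the standard Weinstein Lagrangian neighborhood theorem, refined so that the linearization of the final flow is the identity along $N$, which is exactly what forces the prescribed vertical tangent spaces to match $\Gamma_N$. First, I would construct an initial diffeomorphism $\Phi_0:U_0\subset T^{\ast}N\to M$ that matches the distribution. At each $p\in N$, since $T_pN$ and $\Gamma_p$ are transverse Lagrangian in $(T_pM,\omega_p)$, the form $\omega_p$ gives a unique linear isomorphism $\gamma_p:T^{\ast}_pN\to\Gamma_p$ determined by $\omega_p(v,\gamma_p(\xi))=\xi(v)$ for $v\in T_pN$, $\xi\in T^{\ast}_pN$. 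These assemble into a smooth bundle isomorphism $\gamma:T^{\ast}N\to\Gamma_N$. Pick an auxiliary Riemannian metric on $M$ and set $\Phi_0(\xi):=\exp_{\pi(\xi)}(\gamma(\xi))$. Then $\Phi_0|_N=\id_N$ and $(\Phi_0)_{\ast}(T_0(T^{\ast}_pN))=\Gamma_p$. Using that $T_pN,\Gamma_p$ are both Lagrangian together with the defining property of $\gamma_p$, one checks directly that $(\Phi_0^{\ast}\omega)_p=(\omega_{can})_p$ as $2$-forms on all of $T_p(T^{\ast}N)$ -- a stronger, pointwise coincidence than is needed in the usual Weinstein setup.

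\textbf{Moser deformation.} Set $\omega_0:=\omega_{can}$, $\omega_1:=\Phi_0^{\ast}\omega$, $\omega_t:=(1-t)\omega_0+t\omega_1$. By compactness of $N$ and the pointwise coincidence at $N$, every $\omega_t$ is symplectic on a uniform neighborhood $U_1\subset U_0$ of the zero-section. With $F:[0,1]\times T^{\ast}N\to T^{\ast}N$, $(t,x,y)\mapsto (x,ty)$, define
$$
 \sigma\,:=\,\int_0^1\iota_{\partial_t}F^{\ast}(\omega_1-\omega_0)\,dt\,,
$$
so that $d\sigma=\omega_1-\omega_0$. Let $X_t$ be the time-dependent vector field determined by $\iota_{X_t}\omega_t=-\sigma$ and $\psi_t$ its flow, well-defined on $t\in[0,1]$ after shrinking $U_1$. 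Moser's identity then yields $\psi_1^{\ast}\omega_1=\omega_0$. The sharpened point is that, because $\omega_1-\omega_0$ vanishes \emph{as a $2$-form} at every $p\in N$ (from Step 1), a direct coordinate expansion of $F^{\ast}(\omega_1-\omega_0)$ in fiber coordinates $y$ transverse to $N$ gives $\sigma=O(|y|^2)$; hence $X_t=O(|y|^2)$ and in particular $X_t|_N=0$ and $DX_t|_N=0$. The linearized flow equation $\frac{d}{dt}D\psi_t|_p=DX_t|_p\cdot D\psi_t|_p$ then forces $D\psi_t|_p=\id$ for every $p\in N$ and every $t\in[0,1]$. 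Setting $\Phi:=\Phi_0\circ\psi_1$ and shrinking $U_1$ once more so that $\Phi$ is an embedding, one obtains $\Phi^{\ast}\omega=\omega_{can}$, $\Phi|_N=\id_N$, and $\Phi_{\ast}(T_0(T^{\ast}_pN))=(\Phi_0)_{\ast}(T_0(T^{\ast}_pN))=\Gamma_p$ for every $p\in N$.

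\textbf{Main obstacle.} The delicate part is not the Moser flow itself but securing the \emph{second-order} (rather than merely first-order) vanishing of the primitive $\sigma$ along $N$, since this is exactly what makes $D\psi_t|_N=\id$ and thereby propagates the prescribed distribution $\Gamma_N$ intact through the Moser flow. This in turn rests on the stronger arrangement in Step 1, namely $(\omega_1-\omega_0)_p=0$ as $2$-forms on \emph{all} of $T_p(T^{\ast}N)$ at every $p\in N$, not only on the tangential directions to $N$; once this compatibility is in place, the remaining steps are a routine adaptation of the standard Weinstein-Moser argument.
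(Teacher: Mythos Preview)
Your proof is correct and takes a genuinely different route from the paper's. The paper first invokes the standard Lagrangian Neighborhood Theorem (Theorem~2.3.1) to reduce to $(M,\omega)=(T^{\ast}N,\omega_{can})$, so that the symplectic form is already correct; it then encodes the transverse Lagrangian distribution $\Gamma_N$ as a symmetric bilinear form on $T^{\ast}N$, realizes this form as the transverse Hessian of a compactly supported function $f$ on $T^{\ast}N$ with $f|_N=0$, $(df)|_N=0$, and takes $\Phi$ to be the time-$1$ Hamiltonian flow of $f$. Because a Hamiltonian flow is automatically symplectic, the only remaining work is to check that the leading (linear-in-$p$) part of $X_f$ carries the vertical fibers onto $\Gamma_N$.

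Your approach reverses the order of operations: you first arrange the distribution via the exponential map and the canonical isomorphism $\gamma_p:T^{\ast}_pN\to\Gamma_p$ supplied by $\omega_p$, so that $\Phi_0$ already has the correct vertical tangents, and then run Moser to correct $\Phi_0^{\ast}\omega$ back to $\omega_{can}$. The crux --- which you identify correctly --- is that the pointwise agreement $(\Phi_0^{\ast}\omega)_p=(\omega_{can})_p$ on \emph{all} of $T_p(T^{\ast}N)$ (not merely on $T_pN$) forces the homotopy primitive $\sigma$ to vanish to second order along $N$; hence $X_t$ vanishes to second order, $D\psi_1|_N=\id$, and the prescribed distribution survives the Moser flow intact.

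What each approach buys: the paper's Hamiltonian-flow construction produces an explicit generating function (a fiberwise quadratic form built directly from $\Gamma_N$), which is convenient when one subsequently wants the Lagrangian neighborhood to carry extra structure --- for instance the $S^1$-equivariance required in Proposition~2.4.1. Your Moser argument stays closer in spirit to the standard Weinstein proof and isolates cleanly the analytic mechanism by which the prescribed distribution persists (second-order vanishing of $\sigma$), at the cost of being less explicit about the final diffeomorphism.
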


\begin{proof}
 $(a)$ {\it Reformulation of the problem.} \hspace{1em}
 Since the problem concerns only a neighborhood of $N$ in $M$,
 by Theorem~2.3.1
  one may assume that
   $(M,\omega)=(T^{\ast}N,\omega_{can})$ with $N$ the zero-section.
 The fact that $\Gamma_N$ is transverse to $N$ implies that
  $\Gamma_N$ defines
   a bundle map $T^{\ast}N\rightarrow T_{\ast}N$ and, hence,
   a bilinear map
  $(\,\cdot\,,\cdot\,)_{\Gamma_N}:
                      T^{\ast}N\oplus T^{\ast}N\rightarrow {\Bbb R}$.
 The Lagrangian property of $\Gamma_N$ implies that
  $(\,\cdot\,,\,\cdot\,)_{\Gamma_N}$
  is a symmetric bilinear functional on $T^{\ast}N$.

 \medskip

 \noindent
 $(b)$ {\it Realization of $(\,\cdot\,,\,\cdot\,)_{\Gamma_N}$
             as a restriction of Hessian.} \hspace{1em}
 Recall that
 a smooth function $f:T^{\ast}N\rightarrow {\Bbb R}$
   such that $(df)|_N\equiv 0$
  defines a symmetric bilinear functional $\Hess_f$
  on $(T_{\ast}(T^{\ast}N))|_N$ by setting
  $\,\Hess_f(v,w)\,=\,v(\tilde{w}f)\,$,
  where
   $v, w\in T_x(T^{\ast}N)$ for some $x\in N$ and
   $\tilde{w}$ is an extension of $w$
    to a smooth vector field in a neighborhood of $x$ in $T^{\ast}N$.
 $\Hess_f$ is independent of the extension $\tilde{w}$ of $w$
  and, hence, well-defined.
 It is called
  the {\it Hessian of $f$ at the critical manifold $N$ of $f$}.
 Since $T^{\ast}N$ is canonically embedded in $(T_{\ast}(T^{\ast}N))|_N$
  as vector bundles over $N$,
 $\Hess_f$ defines further a symmetric bilinear functional
  $\Hess_f^{\,\perp}$ on $T^{\ast}N$ by its restriction to $T^{\ast}N$.

 \bigskip

 \noindent
 {\sl Claim.} {\it
  There exists a smooth function $f: T^{\ast}N\rightarrow {\Bbb R}$
    that is supported in a compact neighborhood $U^{\prime}$
    of the zero-section
    such that
      both $f|_N$ and $(df)|_N$ vanish  and
    $\Hess_f^{\,\perp}=(\,\cdot\,,\,\cdot\,)_{\Gamma_N}$.
  $U^{\prime}$ can be chosen to be arbitrarily small.
 } 

 \bigskip

 \noindent
 {\it Proof of Claim.}
  Let $\pi: T^{\ast}N\rightarrow N$ be the built-in map.
  Then a local chart $V$
    (with coordinates ${\mathbf q}:=(q_1,\,\cdots\,,\,q_m)$) on $N$
   induces a local chart $\pi^{-1}(V)$ on $T^{\ast}N$
    (with induced canonical coordinates
     $({\mathbf q},{\mathbf p})
       :=(q_1,\,\cdots\,,\,q_m, p_1,\,\cdots\,,\,p_m)$.
  Let $\Gamma_V:=\Gamma_N|_V$.
  Then the symmetric bilinear functional
   $(\,\cdot\,,\,\cdot\,)_{\Gamma_V}$ on $T^{\ast}V$
   can be expressed as a ${\mathbf q}$-dependent quadratic form
    $g_V:=\frac{1}{2}\,\sum_{1\le i,j\le m}\,a_{ij}({\mathbf q})p_ip_j$,
    where $a_{ij}({\mathbf q})=a_{ji}({\mathbf q})$ for all $i,\,j$,
   in ${\mathbf p}$.
  As a function on $\pi^{-1}(V)$,
   $g_V$ satisfies the property
    that both $g_V|_V$ and $(dg_V)|_V$ vanish  and
    that $\Hess_{g_V}^{\,\perp}=(\,\cdot\,,\,\cdot\,)_{\Gamma_V}$.
  Observe now the following property,
   which can be checked straightforwardly:
  \begin{itemize}
   \item[$\cdot$]
    For convenience,
      call a function $g:\pi^{-1}(V)\rightarrow {\Bbb R}$ {\it admissible}
     if $g$ satisfies the condition that
       both $g|_V$ and $(dg)|_V$ vanish  and that
       $\Hess_g^{\,\perp}=(\,\cdot\,,\,\cdot\,)_{\Gamma_V}$.
    Then,
    {\it if $g_1,\,g_2$ are admissible functions on $T^{\ast}V$ and
            $h_1,\,h_2$ are positive functions on $V$,
         then $h_1g_1+h_2g_2$ is an admissible function on $T^{\ast}V$}.
  \end{itemize}
  Let
   $\{\mu_i\}_i$ be a partition of unity on $N$
    subordinate to a locally finite covering $\{V_i\}_i$ of $N$  and
   $g_{V_i}$ be an admissible function on $T^{\ast}V_i$,
    whose existence is demonstrated by the explicit example.
  Then it follows from the observation above
   that $g:=\sum_i\mu_ig_{V_i}$ is an admissible function on $T^{\ast}N$.
  Finally,
   introduce a cutoff function $\chi: T^{\ast}N\rightarrow [0,1]$
     that is supported on an arbitrarily small neighborhood
     of the zero-section.
  Then $f:=\chi g$ satisfies all the required properties in the Claim.

 \noindent\parbox{16cm}{\raggedleft $\square\hspace{3em}$}

 \medskip

 \noindent
 $(c)$ {\it $\Phi$ from time-$1$ map of Hamiltonian flow.} \hspace{1em}
 Let
  $f: T^{\ast}N\rightarrow {\Bbb R}$ be a smooth function on $T^{\ast}N$
   as constructed in Part (b),
  $X_f$ be the Hamiltonian vector field on $T^{\ast}N$ associated to $f$
   (i.e.\ $i(X_f)\omega_{can}=df$)
    and
  $\Phi:=\Phi_1:T^{\ast}N\rightarrow T^{\ast}N$ be the time-$1$ map
   of the Hamiltonian flow $\Phi_t$, $t\in {\Bbb R}$, on $T^{\ast}N$
   associated to $f$
   (i.e.\ $\frac{d}{dt}\Phi_t=X_f\circ\Phi_t$).
 By construction,
  $\Phi$ is a symplectomorphism on $T^{\ast}N$ that leaves $N$ fixed.
 Recall the proof of Claim in Part (b) and
        the notations and terminology therein.
 Then, since $f$ is admissible,
  $$
   f|_{\pi^{-1}(V)}\;
    =\; \frac{1}{2}\,\sum_{1\le i,j\le m}\,a_{ij}({\mathbf q})p_ip_j\,
        +\, o(|{\mathbf p}|^2)
  $$
  and
  $$
   X_f|_{\pi^{-1}(V)}\;=\;
    \sum_{i=1}^m \left(\sum_{j=1}^m\,a_{ij}({\mathbf q})\,p_j\right)\,
                 \frac{\partial}{\partial q_i}\,
        +\, o(|{\mathbf p}|)\,.
  $$
 Let $x\in V$, $V_x$ be a neighborhood of $x$ in $\pi^{-1}(V)$.
 Assume that $V_x$ is small enough and
  let $\Phi_{1,V_x}^{\prime}: V_x\rightarrow \pi^{-1}(V)$
   be the time-$1$ map of the flow generated by
  $\sum_{i=1}^m (\sum_{j=1}^m\,a_{ij}({\mathbf q})\,p_j)\,
                 \frac{\partial}{\partial q_i}\,$.
 Then, from the above approximation of $X_f|_{\pi^{-1}(V)}$,
  $$
   \Phi_{\ast}(T_0(T^{\ast}_xN))\;
   =\; \Phi_{1,V_x, \ast}^{\prime}(T_0(T^{\ast}_x\pi^{-1}(V)))\;
   =\; \Gamma_x\,.
  $$

 Take now $U=\Phi^{-1}(U^{\prime})$.
 Then $\Phi:U\rightarrow M$ gives a neighborhood of $N$ in $M$
  as required.
 This completes the proof of the theorem.

\end{proof}


%
%
%
%
%
%
%
%
%
%

\bigskip

\subsection{Admissible Lagrangian neighborhoods for $L^{\prime,a}$
    in $Y^{\prime}$ under $\psi^a$ and
     their geometry under partial scaling.}
Recall
 the flat Calabi-Yau $3$-fold
  $Y^{\prime}
   = ({\Bbb R}^2\times S^1\times {\Bbb R}^3,
      J^{\prime}, \omega^{\prime}, \Omega^{\prime})$
   with the real (resp.\ complex, hyperK\"{a}hler-rotated) coordinates
    $\,(u_1, u_2, u_3, v_1, v_2, v_3)\,$
   (resp.\
    $\,(z_1, z_2, z_3)
       = (u_1+\sqrt{-1}v_1, u_2+\sqrt{-1}v_2, u_3+\sqrt{-1}v_3)\,$,
    $\,(\hat{z}_1, u_3, \hat{z}_2, v_3)
       = (u_1+\sqrt{-1}u_2, u_3, v_1-\sqrt{-1}v_2, v_3)\,$)  and
 the embedded smooth special Lagrangian submanifolds
   $\,L^{\prime}\,=\, {\Bbb R}^2\times S^1\times\{\mbox{\boldmath $0$}\}\,$
    and
   $\,L^{\prime,a}\, =\, \{ a\hat{z}_1-\hat{z}_2^m=0,\, v_3=0 \}\,$,
     for $a>0$,
  in $Y^{\prime}$
 in Sec.~2.2.
Under the real coordinates, $(Y^{\prime},\omega^{\prime})$
 is identified with $(T^{\ast}L^{\prime}, \omega_{can})$ under the map
 $(u_1, u_2, u_3, v_1, v_2, v_3)
                    = (u_1, u_2, u_3, p_{u_1}, p_{u_2}, p_{u_3})$.
Recall also
 the embedded special Lagrangian map
  $\psi^a:L\rightarrow Y^{\prime}$,
   $\,(x_1, x_2, x_3)\mapsto
        ((x_1+\sqrt{-1}x_2)^m,\, x_3,\,
                 a^{1/m}\,(x_1+\sqrt{-1}x_2),\, 0\,)\,$,
   with image $L^{\prime,a}$,
 $h^a\in C^{\infty}(L)$ in Lemma~2.2.2,
   and
 the partial scaling on $Y^{\prime}$,
 $(\hat{z}_1, u_3, \hat{z}_2, v_3)
                   \mapsto (t\hat{z}_1, u_3, t\hat{z}_2, v_3)$,
 for $t>0$ in Notation~2.2.3.

\begin{proposition}
{\bf [admissible Lagrangian neighborhood: existence].}
 Given $R^{\prime}_0>0$,
 there exist
  a neighborhood $U_L^a$ of the zero-section of $T^{\ast}L$  and
  a symplectic embedding $\Phi_L^a: U_L^a\rightarrow Y^{\prime}$
   whose restriction to the zero-section is $\psi^a$
  such that
   $$
    \Phi_L^a|_{\pi^{-1}(\{|(x_1+\sqrt{-1}x_2)^m|\ge R^{\prime}_0\})}
     (\,\cdot\,)\;
    =\; \psi^0_{\,!}(dh^a+\,\cdot\,)
   $$
   and
  $\Phi_L^a$ is equivariant with respect to the translations along
   the $S^1$-direction under $u_3=x_3$.
 Here, $\pi:U_L^a\rightarrow L$ is the restriction of
  the projection map $\pi: T^{\ast}L\rightarrow L$.
\end{proposition}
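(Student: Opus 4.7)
The plan is to build $\Phi_L^a$ in two pieces --- explicitly via the covering map $\Phi$ of Sec.~2.2 away from the branch locus $\Gamma = \{(0,0)\}\times S^1 \subset L$, and abstractly via Weinstein's Lagrangian neighborhood theorem (Theorem~2.3.1) in an $S^1$-invariant tubular neighborhood of $\Gamma$ --- and then glue the two by a Moser-type relative Darboux argument that is the identity on $\pi^{-1}(\{|(x_1+\sqrt{-1}x_2)^m| \geq R_0^{\prime}\})$.

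For the far region, on $\pi^{-1}(V^{\flat})$ with $V^{\flat} := \{|(x_1+\sqrt{-1}x_2)^m| > R_0^{\prime}/2\} \subset L$, I would set $\Phi_{\mathrm{far}}(\alpha) := \Phi(dh^a + \alpha)$. Since $dh^a$ is closed, the fiberwise translation $T_{dh^a}: T^{\ast}L^{\diamond} \to T^{\ast}L^{\diamond}$ is a symplectomorphism, and combined with $\Phi^{\ast}\omega^{\prime} = \omega_{can}$ from Sec.~2.2 one gets $\Phi_{\mathrm{far}}^{\ast}\omega^{\prime} = \omega_{can}$. Lemma~2.2.2 (with the $k=0$ lift) identifies the graph of $dh^a$ with $L^{\prime,a,\diamond}$, so $\Phi_{\mathrm{far}}$ carries the zero section onto $\psi^a(V^{\flat})$; shrinking the fiber radius selects a single sheet of the degree-$m$ cover, making $\Phi_{\mathrm{far}}$ a symplectic embedding. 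It is manifestly $S^1$-equivariant in $x_3 = u_3$.

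Next, I would extend $\Phi_{\mathrm{far}}$ to a smooth, $S^1$-equivariant embedding $\tilde\Phi_L^a: U_L^a \to Y^{\prime}$ of a neighborhood of the zero section of the full $T^{\ast}L$, so that $\tilde\Phi_L^a$ equals $\Phi_{\mathrm{far}}$ on $\pi^{-1}(V^{\flat})$ and restricts to $\psi^a$ on the zero section. Near $\Gamma$ this requires a choice: one can use Theorem~2.3.3 as a local model for $\psi^a$ in an $S^1$-invariant tube around $\Gamma$, with transverse Lagrangian distribution $\Gamma_N$ chosen equivariantly and matched to the distribution induced by $\Phi_{\mathrm{far}}$ on the overlap, and patch via an $S^1$-invariant bump function supported strictly inside $\{|(x_1+\sqrt{-1}x_2)^m| < R_0^{\prime}\}$, so that the two constructions agree as maps into $Y^{\prime}$ (not merely on the zero section) throughout $\pi^{-1}(V^{\flat})$. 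The two-form $\eta := \tilde\Phi_L^{a,\ast}\omega^{\prime} - \omega_{can}$ is then closed, vanishes on $L$, and vanishes identically on $\pi^{-1}(V^{\flat})$; a relative Poincar\'e lemma via fiberwise homothety in $T^{\ast}L$ produces a primitive $\lambda$ with $d\lambda = \eta$, $\lambda|_L = 0$, and $\lambda \equiv 0$ on $\pi^{-1}(V^{\flat})$. Moser's trick on the path $\omega_{can} + t\eta$, $t \in [0,1]$, integrates the time-dependent vector field $X_t$ determined by $i(X_t)(\omega_{can} + t\eta) = -\lambda$ to a diffeomorphism $\chi$ that fixes the zero section pointwise and is the identity on $\pi^{-1}(\{|(x_1+\sqrt{-1}x_2)^m| \geq R_0^{\prime}\})$; then $\Phi_L^a := \tilde\Phi_L^a \circ \chi$ is the desired Lagrangian neighborhood, with $S^1$-equivariance inherited from all ingredients being chosen $S^1$-invariantly.

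The main obstacle is in the gluing step: one must arrange the interpolation between $\Phi_{\mathrm{far}}$ and the local Weinstein model at $\Gamma$ inside the target $Y^{\prime}$ so that the two maps \emph{coincide} (not merely match pullbacks) on the overlap $\pi^{-1}(V^{\flat})$, which is what forces the Moser primitive $\lambda$ to vanish there and thereby preserves the prescribed form of $\Phi_L^a$ on $\pi^{-1}(\{|(x_1+\sqrt{-1}x_2)^m| \geq R_0^{\prime}\})$. One must also cope with the blow-up of fiber radii in the image of $\Phi_{\mathrm{far}}$ as $|x_1+\sqrt{-1}x_2| \to 0$ (coming from the $z^{1-m}$ factor in the explicit formula for $\Phi$): the neighborhood $U_L^a$ must shrink to zero fiber radius near $\Gamma$, but this is compatible with the prescription since the latter only constrains $\Phi_L^a$ on the far region.
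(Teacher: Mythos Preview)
Your proposal is correct but takes a genuinely different route from the paper. You glue two Lagrangian-neighborhood maps (the explicit $\Phi_{\mathrm{far}}$ and an abstract Weinstein model near $\Gamma$) and then Moser-correct the pulled-back symplectic form, arranging the support of the correction to avoid $\pi^{-1}(\{|(x_1+\sqrt{-1}x_2)^m| \ge R^{\prime}_0\})$. The paper instead extends only first-order data: the fibers of $T^{\ast}L^{\prime} \simeq Y^{\prime}$ over $\{|\hat{z}_1| \ge R^{\prime}_0\}$ determine a transverse Lagrangian distribution along $L^{\prime,a,\diamond}$, which --- being a section of the trivial bundle $\Mor^{sym}(J\cdot T_{\ast}L^{\prime,a},\,T_{\ast}L^{\prime,a})$ --- extends smoothly and $S^1$-equivariantly over all of $L^{\prime,a}$; the linear structure of $Y^{\prime}$ then integrates the extended distribution to a transverse Lagrangian \emph{foliation}, and a single application of Theorem~2.3.2 produces $\Phi_L^a$. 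The uniqueness clause of that theorem forces $\Phi_L^a$ to coincide with $\psi^0_{\,!}(dh^a+\,\cdot\,)$ on the outer region automatically, with no Moser step. The paper's route is shorter and entirely sidesteps the gluing obstacle you yourself flag (forcing the far and near maps to coincide on the overlap, not merely agree to first order); its cost is that it exploits the flat linear structure of $Y^{\prime}$ to promote a distribution to a foliation, which your Moser argument does not need. Incidentally, your worry about blow-up of fiber radii as $|x_1+\sqrt{-1}x_2|\to 0$ is moot here, since $\Phi_{\mathrm{far}}$ is only ever invoked on the region bounded away from $\Gamma$.
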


\begin{proof}
 The decomposition-by-Lagrangian-subbundles
  $T_{\ast}Y^{\prime}|_{L^{\prime,a}}
   =T_{\ast}L^{\prime, a}\oplus J\cdot T_{\ast}L^{\prime,a}$
  and the fact that $T_{\ast}L^{\prime,a}$ is a trivial bundle
 imply that
  a Lagrangian distribution along and transverse to $L^{\prime,a}$
   in $Y^{\prime}$ is the same as a section of the trivial bundle
   $\Mor^{sym}(J\cdot T_{\ast}L^{\prime,a}, T_{\ast}L^{\prime,a})$
   of linear maps from $J\cdot T_pL^{\prime,a}$ to
   $T_pL^{\prime,a}$, $p\in L^{\prime,a}$,
   that are represented by symmetric matrices
   under the canonical isomorphism
   $T_{\ast}L^{\prime,a}
      \stackrel{\sim}{\rightarrow} J\cdot T_{\ast}L^{\prime,a}$
   by $J$ and a fixed trivialization of
   $J\cdot T_{\ast}L^{\prime, a}\simeq L^{\prime,a}\times{\Bbb R}^3$.
 Under the identification $T^{\ast}L^{\prime}\simeq Y^{\prime}$,
  the fibers of $(T^{\ast}L^{\prime})|_{\{|\hat{z}_1|\ge R^{\prime}_0\}}$
  give a transverse Lagrangian foliation along $L^{\prime,a,\diamond}$
  and, hence, a smooth map
  $L^{\prime,a}\cap \{|\hat{z}_1|\ge R^{\prime}_0\}
                                   \rightarrow {\Bbb R}^3$.
 It can always be extended to a smooth map
  $L^{\prime,a}\rightarrow {\Bbb R}^3$ and, hence,
  a transverse Lagrangian distribution along $L^{\prime,a}$.
 The linear structure on $Y^{\prime}$ turns further
  a transverse Lagrangian distribution along $L^{\prime,a}$
  into a transverse Lagrangian foliation in a neighborhood of
  $L^{\prime,a}$ in $Y^{\prime}$.

 In our situation, all these constructions can be made invariant
  under the $S^1$-translations on $Y^{\prime}$ in the $u_3$-coordinate.
 The proposition now follows from the proof of Theorem~2.3.2.

\end{proof}

\begin{definition}
{\bf [admissible Lagrangian neighborhood for $L^{\prime, a}$
      under $\psi^a$].} {\rm
 A Lagrangian neighborhood
   $\Phi_L^a: U_L^a\rightarrow Y^{\prime}$ for $L^{\prime,a}$
   as in Proposition~2.4.1 for some $R^{\prime}_0>0$
  is called an {\it admissible Lagrangian neighborhood}
   for $L^{\prime, a}$ under $\psi^a$.
}\end{definition}

\begin{notation}
{\bf [partial scaling].} {\rm
 Recall the map $t\cdot\psi^a:= \psi^{at^{m-1}}$ from Notation~2.2.3.
 We now extend this to denote
  $$
   \begin{array}{ccccc}
    t\cdot\Phi_L^a\; :=\;  \Phi_L^{a,t}
     & : & U_L^{a,t}  & \longrightarrow  & Y^{\prime}\\[.8ex]
    && (x_1,x_2,x_3, p_{x_1}, p_{x_2}, p_{x_3})
     & \longmapsto
     & t\cdot \Phi_L^a
       \left(t^{-1/m}\cdot(x_1,x_2,x_3, p_{x_1}, p_{x_2}, p_{x_3}) \right)
   \end{array}
  $$
  for a given admissible Lagrangian neighborhood
  $\Phi_L^a: U_L^a\rightarrow Y^{\prime}$ for $L^{\prime,a}$
  under $\psi^a$.
 Here,
  $t^{-1/m}\cdot\,$ is the partial scaling on $T^{\ast}L$,
   defined by
   $$
    (x_1,x_2,x_3, p_{x_1}, p_{x_2}, p_{x_3})\;
    \longmapsto\;
    (t^{-1/m}x_1, t^{-1/m}x_2,x_3,
                  t^{-1/m}p_{x_1}, t^{-1/m}p_{x_2}, p_{x_3})\,,
   $$
   and
  $U_L^{a,t}=t^{1/m}\cdot U_L^a \subset
   (T^{\ast}L,
    t^{2(m-1)/m}(dx_1\wedge dp_{x_1}+dx_2\wedge dp_{x_2})
    +dx_3\wedge dp_{x_3})$.
 By construction, $t\cdot\Phi_L^a$ is a symplectic embedding
  that gives an admissible Lagrangian neighborhood
  for $t\cdot L^{\prime,a}$ under $t\cdot \psi^a$.
 It has the image $t\cdot \Image\Psi_L^a$.
}\end{notation}

\begin{proposition}
{\bf [radius of fibers of $U_L^{a,t}/L$].}
 Let
  $g^{a,t}:=(t\cdot\psi^a)^{\ast}g^{\prime}$
   be the pull-back metric on $L$, $0<t<1$, and
  $|\cdot|_{g^{a,t}}$ be the norm on fibers of $T^{\ast}L$
   via $(g^{a,t})^{-1}$ with respect to the dual basis.
 Note that for $A^{\prime}_1>0$ small enough,
  $U_L^a$ contains the neighborhood
  $\{\alpha\in T^{\ast}L\,:\, |\alpha|_{g^a}< A^{\prime}_1 \}$
  of the zero-section in $T^{\ast}L$, where $g^a:= g^{a,1}$.
 Then, $U_L^{a,t}$ contains the neighborhood
  $\{\alpha\in T^{\ast}L\,:\, |\alpha|_{g^{a,t}}< A^{\prime}_1 t\}$
  of the zero-section in $T^{\ast}L$.
\end{proposition}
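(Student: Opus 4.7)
The plan is to reduce the stated inclusion to a direct comparison of cotangent norms under the partial scaling. By construction $U_L^{a,t}=\tilde{\sigma}_t(U_L^a)$, where $\tilde{\sigma}_t:=t^{1/m}\cdot$ denotes the partial scaling on $T^{\ast}L$. For $\alpha\in T^{\ast}L$ with $|\alpha|_{g^{a,t}}<A^{\prime}_1 t$, the desired containment $\alpha\in U_L^{a,t}$ is equivalent to $\tilde{\sigma}_t^{-1}(\alpha)\in U_L^a$, which by the hypothesis on $U_L^a$ reduces to showing $|\tilde{\sigma}_t^{-1}(\alpha)|_{g^a}<A^{\prime}_1$. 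Thus the real work is a bookkeeping comparison of $g^a$ and $g^{a,t}$ together with the effect of $\tilde{\sigma}_t^{-1}$ on fiber norms.

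Concretely, writing $w:=x_1+\sqrt{-1}x_2$ and pulling back $g^{\prime}$ by $\psi^a$ and by $t\cdot\psi^a=\psi^{at^{m-1}}$ gives
\begin{gather*}
 g^a\;=\;\bigl(m^2|w|^{2(m-1)}+a^{2/m}\bigr)(dx_1^2+dx_2^2)+dx_3^2\,,\\
 g^{a,t}\;=\;\bigl(m^2|w|^{2(m-1)}+a^{2/m}t^{2(m-1)/m}\bigr)(dx_1^2+dx_2^2)+dx_3^2\,.
\end{gather*}
Dualizing to fiber metrics on $T^{\ast}L$ and evaluating $|\tilde{\sigma}_t^{-1}(\alpha)|_{g^a}$ at the rescaled base point $(t^{-1/m}x_1,t^{-1/m}x_2,x_3)$ with rescaled fiber coordinates $(t^{-1/m}p_{x_1},t^{-1/m}p_{x_2},p_{x_3})$, one obtains after collecting powers of $t$
$$
 |\tilde{\sigma}_t^{-1}(\alpha)|_{g^a}^2\;=\;t^{2(m-2)/m}\,H(\alpha)\,+\,V(\alpha)\,,
$$
where $H(\alpha):=(p_{x_1}^2+p_{x_2}^2)/(m^2|w|^{2(m-1)}+a^{2/m}t^{2(m-1)/m})$ and $V(\alpha):=p_{x_3}^2$ are the horizontal and vertical pieces of $|\alpha|_{g^{a,t}}^2=H(\alpha)+V(\alpha)$.

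The conclusion is then immediate: with $m\geq 2$ and $0<t<1$, one has $t^{2(m-2)/m}\leq 1$, so $|\tilde{\sigma}_t^{-1}(\alpha)|_{g^a}\leq|\alpha|_{g^{a,t}}<A^{\prime}_1 t<A^{\prime}_1$, whence $\tilde{\sigma}_t^{-1}(\alpha)\in U_L^a$ and $\alpha\in U_L^{a,t}$. The only step requiring care is the exponent accounting that produces the factor $t^{2(m-2)/m}$: the three scalings coming from $a\mapsto at^{m-1}$, from $t^{-1/m}$ on the base, and from $t^{-1/m}$ on the cotangent fibers conspire to cancel cleanly, and I do not anticipate any genuine obstacle. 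Note that the same argument in fact yields the stronger containment $U_L^{a,t}\supset\{|\alpha|_{g^{a,t}}<A^{\prime}_1\}$; the weaker bound in the statement is presumably chosen to dovetail with the $t^{2(m-1)/m}$-twisted symplectic form of Notation 2.4.3 and with estimates appearing in later sections.
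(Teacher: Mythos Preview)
Your proof is correct and follows the same approach as the paper's: reduce the containment $\alpha\in U_L^{a,t}=\tilde{\sigma}_t(U_L^a)$ to the norm comparison $|\tilde{\sigma}_t^{-1}(\alpha)|_{g^a}<A'_1$, and establish the latter by comparing $g^a$ and $g^{a,t}$ under the partial scaling. The paper compresses this into the one-line metric inequality $g^{a,t}\ge t^2\,(\sigma_{t^{-1/m}})^{\ast}g^a$ (``since $t\cdot\psi^a=t\cdot(\psi^a(t^{-1/m}\cdot\;))$ and the scaling is only partial''), whereas you carry out the same comparison by explicit coordinate computation. Your exponent bookkeeping is correct and in fact produces the sharper relation $|\tilde{\sigma}_t^{-1}(\alpha)|_{g^a}\le|\alpha|_{g^{a,t}}$ (equality in the horizontal directions, with exponent $2(m-2)/m\ge 0$), whereas the paper's cruder inequality, once dualized and combined with the non-standard fiber scaling in $\tilde{\sigma}_t$, would only give the factor-of-$t^{-1}$ bound that exactly matches the statement. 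Your closing remark that the stronger containment $\{|\alpha|_{g^{a,t}}<A'_1\}\subset U_L^{a,t}$ also holds is correct.
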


\begin{proof}
 Note that $U_L^{a,t}=t^{1/m}\cdot U_L^a$
   as submanifolds in $T^{\ast}L$ and
  that $g^{a,t}\ge t^2g^a\cdot(t^{-1/m}\cdot\,)_{\ast}$
   since $t\cdot \psi^a=t\cdot(\psi^a(t^{-1/m}\cdot\;\;))$
    and the scaling is only partial.
 The proposition follows.

\end{proof}

Continuing the situation in Proposition~2.4.4.
Recall from Definition~1.2
 the pull-back connection $\hat{\nabla}^{a,t}$ on $U_L^{a,t}$
 that is constructed solely by $g^{a,t}$.

\begin{proposition}
{\bf [compatibility of pull-back connection under partial scaling].}
 The partial scaling $t^{1/m}\cdot$ on $T^{\ast}L$
  takes $(U_L^a,\hat{\nabla}^a)$ to $(U_L^{a,t}, \hat{\nabla}^{a,t})$,
  where $\hat{\nabla}^a:=\hat{\nabla}^{a,1}$.
\end{proposition}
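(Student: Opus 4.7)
The plan is to establish two claims: first, that the partial scaling $\sigma_t := t^{1/m}\cdot$ on $T^{\ast}L$ restricts to a diffeomorphism $U_L^a \to U_L^{a,t}$, which is immediate from the definition $U_L^{a,t} := t^{1/m}\cdot U_L^a$ in Notation~2.4.3; and second, that $\sigma_t$ intertwines the pull-back connections, i.e.\ $(\sigma_t)_{\ast}\hat{\nabla}^a = \hat{\nabla}^{a,t}$. The real content lies in the second claim, which I would reduce via Definition~1.2(4) to a compatibility statement for the Levi-Civita connections $\nabla^a$ and $\nabla^{a,t}$ on $L$ under the base scaling $(t^{1/m}\cdot)_L$.

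The first step is to write down $g^a$ and $g^{a,t}$ explicitly. Unwinding $g^{a,t} = (t\cdot\psi^a)^{\ast}g^{\prime}$ via the identification $t\cdot\psi^a = \psi^{at^{m-1}}$ of Notation~2.2.3 gives
\[
 g^a\; =\; [\,m^2(x_1^2+x_2^2)^{m-1} + a^{2/m}\,](dx_1^2+dx_2^2)\, +\, dx_3^2,
\]
\[
 g^{a,t}\; =\; [\,m^2(x_1^2+x_2^2)^{m-1} + a^{2/m}t^{2(m-1)/m}\,](dx_1^2+dx_2^2)\, +\, dx_3^2.
\]
A direct substitution $x_i \mapsto t^{1/m}x_i$ (for $i=1,2$) then yields
\[
 ((t^{1/m}\cdot)_L)^{\ast}g^{a,t}\;
 =\; t^2\,[\,m^2(x_1^2+x_2^2)^{m-1} + a^{2/m}\,](dx_1^2+dx_2^2)\, +\, dx_3^2,
\]
which differs from $g^a$ only by the constant scalar factor $t^2$ on the orthogonal $(dx_1^2+dx_2^2)$-block. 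A constant positive rescaling of a single orthogonal block of a block-diagonal metric leaves the Christoffel symbols within each block unchanged (the scalar cancels in $\tfrac12 g^{kl}\partial_i g_{jl}$) and produces no new mixed Christoffels, since the base-independence of the metric coefficients in the orthogonal direction is preserved. Hence $((t^{1/m}\cdot)_L)^{\ast}g^{a,t}$ and $g^a$ have identical Christoffel symbols on $L$, and therefore $((t^{1/m}\cdot)_L)^{\ast}\nabla^{a,t} = \nabla^a$.

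To conclude, I would lift this identity to $T^{\ast}L$ along the recipe of Definition~1.2(4). The scaling $\sigma_t$ covers $(t^{1/m}\cdot)_L$, so it automatically preserves the canonical vertical subbundle $V = \ker\pi_{\ast}$; because $\sigma_t$ is fiberwise linear, it also preserves the flat partial connection $\hat{\nabla}_V$ along the fibers. For the horizontal partial connection $\hat{\nabla}_H$, which is the pull-back of the Levi-Civita connection from $L$, the Christoffel identity of the previous step translates under $(\sigma_t)_{\ast}$ into the statement that the horizontal lifts $\partial_{x_i} + (\Gamma^a)^k_{ij}(x)\,p_k\,\partial_{p_j}$ for $\nabla^a$ are sent to (constant multiples of) the horizontal lifts $\partial_{y_i} + (\Gamma^{a,t})^k_{ij}(y)\,q_k\,\partial_{q_j}$ for $\nabla^{a,t}$; combined with the preservation of $V$ and $\hat{\nabla}_V$, this gives $(\sigma_t)_{\ast}\hat{\nabla}^a = \hat{\nabla}^{a,t}$. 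The one subtlety to keep in mind is that $\sigma_t$ is not the canonical cotangent lift of $(t^{1/m}\cdot)_L$ (that lift would contract cotangent fibers by $t^{-1/m}$ rather than dilate by $t^{1/m}$), so this final passage from base to $T^{\ast}L$ is best verified by an explicit Christoffel computation in local coordinates rather than by appeal to naturality.
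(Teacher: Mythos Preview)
Your argument is correct. You work directly on $T^{\ast}L$: you compute $g^a$ and $g^{a,t}$ explicitly, verify that the base scaling $(t^{1/m}\cdot)_L$ pulls $g^{a,t}$ back to a metric differing from $g^a$ only by the constant factor $t^2$ on the $(x_1,x_2)$-block, and deduce that the Levi-Civita connections agree under this pull-back; you then lift to $T^{\ast}L$, using crucially that all nonzero Christoffels live in the $\{1,2\}$-block so that the non-canonical fiber scaling in $\sigma_t$ still carries the horizontal distribution $H^a$ to $H^{a,t}$ and commutes with the vertical flat connection. The subtlety you flag at the end is real, and it is precisely this block structure (product metric with one flat factor) that resolves it.

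The paper argues differently and more tersely: it pushes the entire question to the target $Y^{\prime}$ via the symplectic embeddings $\Phi_L^a,\,\Phi_L^{a,t}$, using the built-in relation $\Phi_L^{a,t}\circ\sigma_t = (t\cdot)\circ\Phi_L^a$ of Notation~2.4.3. The statement then becomes that the partial scaling $t\cdot$ on $Y^{\prime}$ carries the horizontal distribution $\hat{\nabla}^{\prime,a}$ over $L^{\prime,a}$ to $\hat{\nabla}^{\prime,a,t}$ over $L^{\prime,at^{m-1}}$, which the paper asserts ``follows by construction'' (the map $t\cdot$ is a homothety of the ambient flat metric on the $(\hat z_1,\hat z_2)$-block and sends one special Lagrangian to the other). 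Your route is more self-contained and makes explicit where the block structure of the metric is used; the paper's route is more conceptual, exploiting the intertwining diagram to avoid any Christoffel bookkeeping on $T^{\ast}L$, at the cost of leaving the last verification implicit.
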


\begin{proof}
 Consider
  the associated horizontal distribution $\hat{\nabla}^{\prime,a,t}$
  of $\hat{\nabla}^{a,t}$ on the fibered (codimension-$0$) submanifold
  $U_{L^{\prime,at^{m-1}}}:=\Phi_L^{a,t}(U_L^{a,t})\subset Y^{\prime}$
  and
 set $\hat{\nabla}^{\prime,a}:= \hat{\nabla}^{\prime,a,1}$.
 Then, the proposition is equivalent to the statement that {\it
  the partial scaling $t\cdot$ on $L^{\prime}$ takes
  $(U_{L^{\prime,a}}, \hat{\nabla}^{\prime,a})$
  to $(U_{L^{\prime,at^{m-1}}}, \hat{\nabla}^{\prime,a,t})$}.
 Which follows by construction.

\end{proof}

Continuing the discussion.
Recall
 the standard holomorphic $3$-form $\Omega^{\prime}$ on $Y^{\prime}$
 and let $\beta^{a,t}:=(\Phi_L^{a,t})^{\ast}(\Imaginary\Omega^{\prime})$.

\begin{proposition}
{\bf [bound for $\|(\hat{\nabla}^{a,t})^k\beta^{a,t}\|_{C^0}$,
      $k=0,\,1,\,2,\,3$].}
 Assume that $t\in(0,1]$.
 Then there exists a constant $A^{\prime}_4>0$ such that
  $\|(\hat{\nabla}^{a,t})^k\beta^{a,t}\|_{C^0}\le A^{\prime}_4t^{-k}$
  for $k=0,\,1,\,2,\,3$.
\end{proposition}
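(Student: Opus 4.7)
The plan is to exploit the partial scaling $s_t$ on $Y'$ and its $T^{\ast}L$-companion $s'_t$ (the partial scaling $t^{1/m}\cdot$) to reduce the $\hat{g}^{a,t}$-estimate at parameter $t$ to a uniform $\hat{g}^a$-estimate at the reference parameter $t=1$, and then to track carefully how the norm amplifies under the partial scaling.

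First I would establish the scaling identity for $\beta^{a,t}$. A direct computation gives $s_t^{\ast}(dz_1)=t\,dz_1$, $s_t^{\ast}(dz_2)=t\,dz_2$, $s_t^{\ast}(dz_3)=dz_3$, hence $s_t^{\ast}\Omega'=t^2\,\Omega'$ and $s_t^{\ast}(\Imaginary\Omega')=t^2\,\Imaginary\Omega'$. Combined with the defining factorisation $\Phi_L^{a,t}\circ s'_t = s_t\circ\Phi_L^a$ from Notation 2.4.3, this yields the key identity $(s'_t)^{\ast}\beta^{a,t} = t^2\,\beta^a$. By Proposition 2.4.5, $s'_t$ intertwines the pull-back connections, so iterating gives $(s'_t)^{\ast}\bigl((\hat{\nabla}^{a,t})^k\beta^{a,t}\bigr) = t^2(\hat{\nabla}^a)^k\beta^a$ for $k=0,1,2,3$. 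For the fixed reference parameter $a>0$, the tensor $(\hat{\nabla}^a)^k\beta^a$ is smooth on $U_L^a$; using the $S^1$-translation equivariance in $x_3$ built into $\Phi_L^a$ and the explicit off-branch-locus formula of Proposition 2.4.1, one gets uniform bounds $\|(\hat{\nabla}^a)^k\beta^a\|_{C^0,\hat{g}^a}\le C_k$.

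To convert these $t=1$ bounds into $\hat{g}^{a,t}$-bounds on $U_L^{a,t}$, I would compare $(s'_t)^{\ast}\hat{g}^{a,t}$ with $\hat{g}^a$ in the canonical coordinate frame $\{\partial_{x_i},\partial_{p_{x_j}}\}$ of $T^{\ast}L$, using the explicit form $g^{a,t}=(m^2r^{2m-2}+a^{2/m}t^{2(m-1)/m})(dx_1^2+dx_2^2)+dx_3^2$. Direct computation shows that the amplification of the cotangent norm under the change of metric from $\hat{g}^a$ to $(s'_t)^{\ast}\hat{g}^{a,t}$ is: $t^{-1}$ on $dx_1,\,dx_2$ (horizontal scaled), $t^{(m-2)/m}\le 1$ on $dp_{x_1},\,dp_{x_2}$ (vertical scaled), and $1$ on $dx_3,\,dp_{x_3}$ (unscaled). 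The crucial observation is then that $\beta^a$ is a $3$-form while the subspace spanned by $\partial_{x_1},\partial_{x_2}$ is only $2$-dimensional: at most $2$ of the $3$ antisymmetric form-indices can lie in the most amplified $(x_1,x_2)$-direction. For $(\hat{\nabla}^a)^k\beta^a$ this allows at most $k+2$ of its $k+3$ covariant indices in the $(x_1,x_2)$-direction, yielding a worst-case amplification of $t^{-(k+2)}$. Combining with the global factor $t^2$ from the scaling identity gives $\|(\hat{\nabla}^{a,t})^k\beta^{a,t}\|_{C^0,\hat{g}^{a,t}}\le t^2\cdot t^{-(k+2)}\,C_k = A'_4\,t^{-k}$, as required.

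The main obstacle is exactly the combinatorial/dimensional bookkeeping in the last step. A careless worst-case estimate over all $k+3$ indices would give $t^{-(k+3)}$ amplification and hence only $t^{-(k+1)}$ after the $t^2$ factor, which is off by one power of $t$. Extracting the correct $t^{-k}$ bound forces one to use the $3$-form antisymmetry against the $2$-dimensionality of the horizontal scaled directions (and verify that the vertical scaled direction, which appears with the non-trivial factor $t^{(m-2)/m}$, only improves the estimate for $m\ge 2$). Once this indexing argument is set up, the actual estimate is purely algebraic.
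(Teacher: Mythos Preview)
Your scaling reduction and the index-counting at the end are essentially the paper's part~(c): the paper works on the $Y'$ side with the constant coframe $(e^1,\ldots,e^6)$ and uses that $\Omega'=dz_1\wedge dz_2\wedge dz_3$ forces exactly one of the three form-indices to lie in the unscaled $\{e^3,e^6\}$ block, whereas you work on the $T^{\ast}L$ side and use that the maximally amplified cotangent directions $dx_1,dx_2$ span only a $2$-plane. Both arguments yield the same $t^{-(k+2)}\cdot t^2=t^{-k}$ bookkeeping, and your observation that the vertical scaled directions contribute $t^{(m-2)/m}\le 1$ is what makes the $T^{\ast}L$ formulation go through.

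The genuine gap is your sentence ``using the $S^1$-translation equivariance \ldots\ one gets uniform bounds $\|(\hat\nabla^a)^k\beta^a\|_{C^0,\hat g^a}\le C_k$''. The domain $U_L^a$ is noncompact in the $(x_1,x_2)$-plane, so smoothness plus $S^1$-equivariance in $x_3$ reduces you only to a noncompact $\mathbb{R}^2$-direction; a uniform $C^0$ bound still requires controlling the asymptotics as $|\hat z_1|\to\infty$. This is exactly what the paper treats as the substantive content of the proof (its parts~(a)--(b)): it writes the connection coefficients of $\hat\nabla^{\prime,a,\diamond}$ explicitly and shows they are $O(|\hat z_1|^{(2-3m)/m})\to 0$, then expands $\beta^{\prime}|_{T^{\ast}L^{\prime,a,\diamond}}$ in the canonical cotangent coordinates and checks term-by-term that all coefficients (and hence all coefficients of $(\hat\nabla^{\prime,a})^k\beta^{\prime,a}$, $k\le 3$) stay bounded at infinity. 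You point to the right tool --- the explicit off-branch-locus formula of Proposition~2.4.1 --- but you have not carried out this decay analysis, and without it the reference constant $C_k$ in your argument is not known to be finite.
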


\begin{proof}
 Let
  $g^{\prime,a,t}$ be the metric on $L^{\prime,at^{m-1}}$
   induced by $g^{\prime}$ on $Y^{\prime}$ and
  $\beta^{\prime,a,t}
    := (\Imaginary\Omega^{\prime})|_{U_{L^{\prime,at^{m-1}}}}$.
 As $g^{a,t}$ on $L$ and $\hat{\nabla}^{a,t}, \beta^{a,t}$ on $U_L^{a,t}$
  come from the pull-back of
   $g^{\prime,a,t}$, $\hat{\nabla}^{\prime,a,t}$, and $\beta^{\prime,a,t}$
   via $\Phi_L^{a,t}$,
 we will directly prove the corresponding inequalities
  on the $Y^{\prime}$-side in three steps.

 \bigskip

 \noindent
 $(a)$
 {\it An explicit expression for $\hat{\nabla}^{\prime,a,\diamond}$
      in $T^{\ast}(T^{\ast}L^{\prime,a,\diamond})$.}
 \hspace{1em}
 Let
  $\hat{z}_1=\hat{r}_1e^{\sqrt{-1}\hat{\theta}_1}$
   and $\hat{z}_2=\hat{r}_2e^{\sqrt{-1}\hat{\theta}_2}$.
 Then
  $$
   L^{\prime,a,\diamond}\;
    =\; \{(\hat{r}_1,\hat{\theta}_1,u_3,\hat{r}_2,\hat{\theta}_2,0)\;:\;
           \hat{r}_2=a^{1/m}\hat{r}_1^{1/m},\,
           \hat{\theta}_2=\hat{\theta}_1/m,\, \hat{r}_1>0\}
  $$
   and
  $(\hat{r}_1,\hat{\theta}_1,u_3)
   \in {\Bbb R}^+\times ({\Bbb R}/(2\pi m)) \times ({\Bbb R}/l)$
   serves as a global coordinate chart on $L^{\prime,a,\diamond}$.
 Let
  $(\hat{r}_1,\hat{\theta}_1,u_3,
     s_{\hat{r}_1}, s_{\hat{\theta}_1}, s_{u_3})$
   be the induced coordinates on $T_{\ast}L^{\prime,a,\diamond}$
   through the trivialization of $T_{\ast}L^{\prime,a,\diamond}$
   by the coordinate frame
   $(\partial_{\hat{r}_1}, \partial_{\hat{\theta}_1},\partial_{u_3})$
   on $L^{\prime,a,\diamond}$
   and
  $(\hat{r}_1,\hat{\theta}_1,u_3,
     p_{\hat{r}_1}, p_{\hat{\theta}_1}, p_{u_3})$
   be the induced coordinates on $T^{\ast}L^{\prime,a,\diamond}$
   through the trivialization of $T^{\ast}L^{\prime,a,\diamond}$
   by the dual coframe $(d\hat{r}_1, d\hat{\theta}_1,du_3)$
   on $L^{\prime,a,\diamond}$.
  In terms of these coordinates,
   $$
    \begin{array}{rcl}
     g^{\prime,a,\diamond}
      & :=\: & g^{\prime}|_{L^{\prime,a,\diamond}} \\[.8ex]
     &   =
      & \left( 1\,+\,m^{-2}a^{2/m}\hat{r}_1^{2(1-m)/m}
        \right) d\hat{r}_1^2\,
         +\, \hat{r}_1^2\,
              \left( 1\,+\,m^{-2}a^{2/m}\hat{r}_1^{2(1-m)/m}
              \right) d\hat{\theta}_1^2\,
         +\, du_3^2                                \\[.8ex]
     & \:=: & A(\hat{r}_1)\,d\hat{r}_1^2\,
              +\, B(\hat{r}_1)\,d\hat{\theta}_1^2\, +\, du_3^2\,.
    \end{array}
   $$
  This is a product of a $2$-dimensional conformally flat metric
   with a circle.
  The Levi-Civita connection $\nabla^{\prime,a,\diamond}$
   from $g^{\prime,a,\diamond}$ defines
   a horizontal distribution $^{\ast}H^{\prime,a,\diamond}$
    in $T^{\ast}L^{\prime,a,\diamond}$,
    given by the kernel of the following ${\Bbb R}^3$-valued $1$-form
    on $T^{\ast}L^{\prime,a,\diamond}$:
    $$
     \left(
      \begin{array}{ll}
       dp_{\hat{r}_1}\\[1.2ex]  dp_{\hat{\theta}_1}\\[1.2ex] dp_{u_3}
      \end{array}\right)\;
       -\; \left(
            \begin{array}{lll}
             \mbox{\boldmath $\omega$}_{\hat{r}_1}^{\hat{r}_1}
              & \mbox{\boldmath $\omega$}_{\hat{r}_1}^{\hat{\theta}_1}
              & 0 \\[1.2ex]
             \mbox{\boldmath $\omega$}_{\hat{\theta}_1}^{\hat{r}_1}
              & \mbox{\boldmath $\omega$}_{\hat{\theta}_1}^{\hat{\theta}_1}
              & 0 \\[1.2ex]
             \;0 & \;0 & 0
            \end{array}
           \right)
     \left(
      \begin{array}{ll}
       p_{\hat{r}_1}\\[1.2ex] p_{\hat{\theta}_1}\\[1.2ex] p_{u_3}
      \end{array}\right)\,.
    $$
    and
   a horizontal distribution $_{\ast}H^{\prime,a,\diamond}$
    in $T_{\ast}L^{\prime,a,\diamond}$,
    given by the kernel of the following ${\Bbb R}^3$-valued $1$-form
    on $T_{\ast}L^{\prime,a,\diamond}$:
    $$
     \left(
      \begin{array}{ll}
       ds_{\hat{r}_1}\\[1.2ex]  ds_{\hat{\theta}_1}\\[1.2ex] ds_{u_3}
      \end{array}\right)\;
       +\; \left(
            \begin{array}{lll}
             \mbox{\boldmath $\omega$}_{\hat{r}_1}^{\hat{r}_1}
              & \mbox{\boldmath $\omega$}_{\hat{\theta}_1}^{\hat{r}_1}
              & 0 \\[1.2ex]
             \mbox{\boldmath $\omega$}_{\hat{r}_1}^{\hat{\theta_1}}
              & \mbox{\boldmath $\omega$}_{\hat{\theta}_1}^{\hat{\theta}_1}
              & 0 \\[1.2ex]
             \;0 & \;0 & 0
            \end{array}
           \right)
     \left(
      \begin{array}{ll}
       s_{\hat{r}_1}\\[1.2ex] s_{\hat{\theta}_1}\\[1.2ex] s_{u_3}
      \end{array}\right)\,.
    $$
  Here, the $1$-forms
   $\mbox{\boldmath $\omega$}_{\mbox{\tiny $\bullet$}}
                             ^{\mbox{\tiny $\bullet$}}$
   on $L^{\prime,a,\diamond}$ are given by
   $$
    \nabla^{\prime,a,\diamond}\partial_{\hat{r}_1}\;
    =\; \mbox{\boldmath $\omega$}_{\hat{r}_1}^{\hat{r}_1}\,
        \partial_{\hat{r}_1}\,
       +\,\mbox{\boldmath $\omega$}_{\hat{r}_1}^{\hat{\theta}_1}\,
          \partial_{\hat{\theta}_1}
     \hspace{2em}\mbox{and}\hspace{2em}
    \nabla^{\prime,a,\diamond}\partial_{\hat{\theta}_1}\;
    =\; \mbox{\boldmath $\omega$}_{\hat{\theta}_1}^{\hat{r}_1}\,
        \partial_{\hat{r}_1}\,
       +\,\mbox{\boldmath $\omega$}_{\hat{\theta}_1}^{\hat{\theta}_1}\,
          \partial_{\hat{\theta}_1}\,;
   $$
  explicitly,
   $$
    \begin{array}{lclcr}
     \mbox{\boldmath $\omega$}_{\hat{r}_1}^{\hat{r}_1}  & =
      & \Gamma_{\hat{r}_1\hat{r}_1}^{\hat{r}_1} d\hat{r}_1\,
        +\,\Gamma_{\hat{\theta}_1\hat{r}_1}^{\hat{r}_1} d\hat{\theta}_1
      & = & \frac{1}{2}A(\hat{r}_1)^{-1}
             \frac{d}{d{\hat{r}_1}}A(\hat{r}_1)\,d\hat{r}_1\,, \\[1.2ex]
     \mbox{\boldmath $\omega$}_{\hat{r}_1}^{\hat{\theta}_1}  & =
      & \Gamma_{\hat{r}_1\hat{r}_1}^{\hat{\theta}_1}d\hat{r}_1\,
        +\,\Gamma_{\hat{\theta}_1\hat{r}_1}^{\hat{\theta}_1}\,
           d\hat{\theta}_1
      & = & \frac{1}{2}B(\hat{r}_1)^{-1}
             \frac{d}{d\hat{r}_1}B(\hat{r}_1)\,d\hat{\theta}_1\,,
                                                             \\[1.2ex]
     \mbox{\boldmath $\omega$}_{\hat{\theta}_1}^{\hat{r}_1}
      & = & \Gamma_{\hat{r}_1\hat{\theta}_1}^{\hat{r}_1} d\hat{r}_1\,
            +\,\Gamma_{\hat{\theta}_1\hat{\theta}_1}^{\hat{r}_1}
               d\hat{\theta}_1
      & = & -\frac{1}{2}A(\hat{r}_1)^{-1}
              \frac{d}{d\hat{r}_1}B(\hat{r}_1)\, d\hat{\theta}_1\,,
                                                          \\[1.2ex]
     \mbox{\boldmath $\omega$}_{\hat{\theta}_1}^{\hat{\theta}_1}
      & = & \Gamma_{\hat{r}_1\hat{\theta}_1}^{\hat{\theta}_1} d\hat{r}_1\,
            +\,\Gamma_{\hat{\theta}_1\hat{\theta}_1}^{\hat{\theta}_1}
               d\hat{\theta}_1
      & = & \frac{1}{2}B(\hat{r}_1)^{-1}
             \frac{d}{d\hat{r}_1}B(\hat{r}_1)\,d\hat{r}_1\,.\\[1.2ex]
    \end{array}
   $$
 The coordinate frame
  $(\partial_{\hat{r}_1}, \partial_{\hat{\theta}_1}, \partial_{u_3},
    \partial_{p_{\hat{r}_1}}, \partial_{p_{\hat{\theta}_1}},
    \partial_{p_{u_3}})$
  on $T^{\ast}L^{\prime,a,\diamond}$ specifies a trivialization
  of the bundle $T_{\ast}(T^{\ast}L^{\prime,a,\diamond})$ over
  $T^{\ast}L^{\prime,a,\diamond}$
  and hence coordinates
  $$
   (\hat{r}_1,\,           \hat{\theta}_1,\,           u_3,\,
    p_{\hat{r}_1},\,       p_{\hat{\theta}_1},\,       p_{u_3},\,
    \xi_{\hat{r}_1},\,     \xi_{\hat{\theta}_1},\,     \xi_{u_3},\,
    \xi_{p_{\hat{r}_1}},\, \xi_{p_{\hat{\theta}_1}},\, \xi_{p_{u_3}})
   $$
  thereupon.
 In terms of this, the horizontal distribution in
  $T_{\ast}(T^{\ast}L^{\prime,a,\diamond})$ that defines the connection
   $\hat{\nabla}^{\prime,a,\diamond}$
  is given by the kernel of the following ${\Bbb R}^6$-valued $1$-form
  on $T_{\ast}(T^{\ast}L^{\prime,a,\diamond})$:
  $$
   \left(\begin{array}{c}
    d\xi_{\hat{r}_1}\\[1.2ex]     d\xi_{\hat{\theta}_1}\\[1.2ex]
    d\xi_{u_3}\\[1.2ex]           d\xi_{p_{\hat{r}_1}}\\[1.2ex]
    d\xi_{p_{\hat{\theta}_1}}\\[1.2ex]    d\xi_{p_{u_3}}
         \end{array}\right)\;
    +\; \left(
         \begin{array}{llllll}
          \mbox{\boldmath $\omega$}_{\hat{r}_1}^{\hat{r}_1}
           & \mbox{\boldmath $\omega$}_{\hat{\theta}_1}^{\hat{r}_1}
           & 0 & 0 & 0 & 0 \\[1.2ex]
          \mbox{\boldmath $\omega$}_{\hat{r}_1}^{\hat{\theta_1}}
           & \mbox{\boldmath $\omega$}_{\hat{\theta}_1}^{\hat{\theta}_1}
           & 0 & 0 & 0 & 0 \\[1.2ex]
          \;0 & \;0 & 0   & 0 & 0 & 0 \\[1.2ex]
          \;0 & \;0 & 0   & 0 & 0 & 0 \\[1.2ex]
          \;0 & \;0 & 0   & 0 & 0 & 0 \\[1.2ex]
          \;0 & \;0 & 0   & 0 & 0 & 0
         \end{array}
        \right)
   \left(\begin{array}{c}
    \xi_{\hat{r}_1}\\[1.2ex]     \xi_{\hat{\theta}_1}\\[1.2ex]
    \xi_{u_3}\\[1.2ex]           \xi_{p_{\hat{r}_1}}\\[1.2ex]
    \xi_{p_{\hat{\theta}_1}}\\[1.2ex]     \xi_{p_{u_3}}
         \end{array}\right)\hspace{1em}
   =:\; d\vec{\xi}\,+\,\mbox{\boldmath $\hat{\omega}$}\vec{\xi}\,.
  $$
 It follows that in terms of the coordinates
  $(\hat{r}_1,\, \hat{\theta}_1,\, u_3,\,
    p_{\hat{r}_1},\,p_{\hat{\theta}_1},\,p_{u_3})$
  on $T^{\ast}L^{\prime,a,\diamond}$,
  $$
   \hat{\nabla}^{\prime,a,\diamond}s\;
    =\; ds\,-\, ^t\mbox{\boldmath $\hat{\omega}$}s
  $$
  for $s$ a section of the bundle
   $T^{\ast}(T^{\ast}L^{\prime,a,\diamond})$ over
    $T^{\ast}L^{\prime,a,\diamond}$
   that is trivialized by the coframe
   $(d\hat{r}_1,\, d\hat{\theta}_1,\, du_3,\,
     dp_{\hat{r}_1},\, dp_{\hat{\theta}_1},\, dp_{u_3})$
   on $T^{\ast}L^{\prime,a,\diamond}$.
 Here, $^t\mbox{\boldmath $\hat{\omega}$}$ is the transpose
  of the matrix-valued $1$-form $\mbox{\boldmath $\hat{\omega}$}$.
 One can convert the above expression
  for $\hat{\nabla}^{\prime,a,\diamond}$ to an expression
  in terms of the complex-real canonical coordinates
  $(\hat{z}_1, p_{\hat{z}_1})
   :=(u_1+\sqrt{-1}u_2,u_3, p_{u_1}-\sqrt{-1}p_{u_2},p_{u_3})$
  as

  \begin{eqnarray*}
   \hat{\nabla}^{\prime,a,\diamond}
    _{\partial_{\hat{z}_1}}\,d\hat{z}_1   & =
    & -\,\frac{
          m^{-3}(1-m)\,a^{2/m}\,
           \hat{z}_1^{(1-2m)/m}\,
           \bar{\hat{z}}_1^{(1-m)/m} }
       {1\, +\, m^{-2}\,a^{2/m}\,
            \hat{z}_1^{(1-m)/m}\,
            \bar{\hat{z}}_1^{(1-m)/m} }\,
      d\hat{z}_1\,,                                      \\[2.4ex]
   \hat{\nabla}^{\prime,a,\diamond}
    _{\partial_{\bar{\hat{z}}_1}}\,d\bar{\hat{z}}_1   & =
    & -\,\frac{
          m^{-3}(1-m)\,a^{2/m}\,
           \hat{z}_1^{(1-m)/m}\,
           \bar{\hat{z}}_1^{(1-2m)/m} }
       {1\, +\, m^{-2}\,a^{2/m}\,
            \hat{z}_1^{(1-m)/m}\,
            \bar{\hat{z}}_1^{(1-m)/m} }\,
      d\bar{\hat{z}}_1\,,                                \\[2.4ex]
   \hat{\nabla}^{\prime,a,\diamond}
    _{\partial_{\hat{z}_1}}\,d\hat{z}_1   & =
    & \hat{\nabla}^{\prime,a,\diamond}
       _{\partial_{\hat{z}_1}}\,d\hat{z}_1
      \hspace{1em}=\hspace{1em}
      \mbox{all other $\hat{\nabla}^{\prime,a,\diamond}
         _{\bullet}\,\bullet$}
      \hspace{1em}=\hspace{1em} 0\,.
  \end{eqnarray*}

 \bigskip

 \noindent
 $(b)$
 {\it A uniform bound for
      $\|(\hat{\nabla}^{\prime,a})^k\beta^{\prime,a}\|_{C^0}$,
      $k=0,\,1,\,2,\,3$, for $U_{L^{\prime,a}}$.}
 \hspace{1em}
 Recall first some basic variations to express $\beta^{\prime}$
  on $Y^{\prime}$:
  $$
   \begin{array}{crl}
    \beta^{\prime}  & :=
     & \Imaginary(dz_1\wedge dz_2\wedge dz_3)\;\;
        =\;\; \Real(dz_1\wedge dz_2)\wedge dv_3
              + \Imaginary(dz_1\wedge dz_2)\wedge du_3   \\[1.2ex]
     & = & \frac{\sqrt{-1}}{2}(
            d\hat{z}_1\wedge d\bar{\hat{z}}_1
             + d\hat{z}_2\wedge d\bar{\hat{z}}_2)\wedge dv_3\,
           -\, \Imaginary(d\hat{z}_1\wedge d\hat{z}_2)\wedge du_3\,.
   \end{array}
  $$
 To re-express $\beta^{\prime}$
  in terms of $T^{\ast}L^{\prime,a,\diamond}$
  in the region $\{|\hat{z}_1|\ge R^{\prime}_0\}$,
 consider the smooth map
  $Y^{\prime,\diamond}\rightarrow Y^{\prime,\diamond}$,
  $(\hat{z}_1,u_3,\hat{z}_2,v_3)
   \mapsto (\hat{z}_1, u_3, \hat{z}_2+a^{1/m}\hat{z}_1^{1/m}, v_3)$.
 Then, $\beta^{\prime}$ is pulled back to
  $$
   \begin{array}{rcl}
    \beta^{\prime}|
     _{T^{\ast}L^{\prime,a,\diamond}_{\{|\hat{z}_1|\ge R^{\prime}_0\}}}
     & =
     & \frac{\sqrt{-1}}{2}\left(
         (1+m^{-2}a^{2/m}\hat{z}_1^{(1-m)/m}\bar{\hat{z}}_1^{(1-m)/m})\,
          d\hat{z}_1\wedge d\bar{\hat{z}}_1
          + dp_{\hat{z}_1}\wedge d\bar{p}_{\hat{z}_1}
                           \right)\wedge dp_{u_3}             \\[1.2ex]
    && +\, \frac{\sqrt{-1}}{2}\,
            m^{-1}a^{1/m}\,
            \left(
             \hat{z}_1^{(1-m)/m}\, d\hat{z}_1\wedge d\bar{p}_{\hat{z}_1}
              - \bar{\hat{z}}_1^{(1-m)/m}\,
                  d\bar{\hat{z}}_1\wedge dp_{\hat{z}_1}
            \right) \wedge dp_{u_3}                           \\[1.2ex]
    && -\, \Imaginary(d\hat{z}_1\wedge dp_{\hat{z}_1})\wedge du_3
   \end{array}
  $$
  with the coordinates on $T^{\ast}L^{\prime,a,\diamond}$ given by
  $(u_1+\sqrt{-1}u_2,u_3, p_{u_1}-\sqrt{-1}p_{u_2}, p_{u_3})
   =:(\hat{z}_1,u_3,p_{\hat{z}_1},v_3)$.
 After a further change of coordinates
  $(\hat{z}_1,u_3,p_{\hat{z}_1},v_3)
   =(\hat{r}_1e^{\sqrt{-1}\hat{\theta}_1},u_3,
     e^{-\sqrt{-1}\hat{\theta}_1}
      (p_{\hat{r}_1}-\mbox{\footnotesize $\sqrt{-1}\,$}
                           \frac{p_{\hat{\theta}_1}}{\hat{r}_1}),
     p_{u_3})$,
 it can be expressed also as
  $$
   \begin{array}{l}
    \beta^{\prime}|
     _{T^{\ast}L^{\prime,a,\diamond}_{\{\hat{r}_1\ge R^{\prime}_0\}}}
                                                             \\[1.2ex]
    =\;
     \left(
      \left[
        (1+m^{-2}a^{2/m}\hat{r}_1^{2(1-m)/m})\,\hat{r}_1
        - \hat{r}_1^{-3}p_{\hat{\theta}_1}^2
      \right] d\hat{r}_1\wedge d\hat{\theta}_1
     \right.                                                 \\[1.2ex]
    \hspace{4em}
     \left.
      -\, \hat{r}_1^{-2}p_{\hat{\theta}_1}\,
            d\hat{r}_1\wedge dp_{\hat{r}_1}\,
      +\, p_{\hat{r}_1}\, d\hat{\theta}_1\wedge dp_{\hat{r}_1}\,
      +\, \hat{r}_1^{-2}p_{\hat{\theta}_1}\,
           d\hat{\theta}_1\wedge dp_{\hat{\theta}_1}\,
      -\, \hat{r}_1^{-1}\, dp_{\hat{r}_1}\wedge dp_{\hat{\theta}_1}
     \right)\wedge dp_{u_3}                                  \\[1.2ex]
    \hspace{1em}
      -\, m^{-1}a^{1/m}\, \hat{r}_1^{(1-m)/m}\,
            \cos\left(\frac{(1+m)\hat{\theta}_1}{m}\right)
            \left(
      p_{\hat{r}_1}\, d\hat{r}_1\wedge d\hat{\theta}_1
      +\, \hat{r}_1^{-1}\, d\hat{r}_1\wedge dp_{\hat{\theta}_1}\,
      +\, \hat{r}_1\, d\hat{\theta}_1\wedge dp_{\hat{r}_1}
            \right) \wedge dp_{u_3}                          \\[1.2ex]
    \hspace{1em}
     +\, m^{-1}a^{1/m}\, \hat{r}_1^{(1-m)/m}\,
            \sin\left(\frac{(1+m)\hat{\theta}_1}{m}\right)
            \left(
      2\hat{r}_1^{-1}p_{\hat{\theta}_1}\,
         d\hat{r}_1\wedge d\hat{\theta}_1\,
      -\, d\hat{r}_1\wedge dp_{\hat{r}_1}\,
      +\, d\hat{\theta}_1\wedge dp_{\hat{\theta}_1}
            \right) \wedge dp_{u_3}                          \\[1.2ex]
    \hspace{1em}
     +\, \left(
          p_{\hat{r}_1}\, d\hat{r}_1\wedge d\hat{\theta}_1\,
          +\, \hat{r}_1^{-1}\, d\hat{r}_1\wedge dp_{\hat{\theta}_1}\,
          -\, \hat{r}_1\, d\hat{\theta}_1\wedge dp_{\hat{r}_1}
         \right) \wedge du_3\,.
   \end{array}
  $$
 To show that
  $\|(\hat{\nabla}^{\prime,a})^k\beta^{\prime,a}\|_{C^0}$,
          $k=0,\,1,\,2,\,3$,
  is uniformly bounded on $U_{L^{\prime,a}}$,
 one only needs to show that
  $\|(\hat{\nabla}^{\prime,a})^k\beta^{\prime,a}\|_{C^0}$,
            $k=0,\,1,\,2,\,3$,
  is uniformly bounded on
  $U_{L^{\prime,a}}\cap \{|\hat{z}_1|\ge R^{\prime}_0\}$.

 First, recall Definition~1.2
  that the horizontal distribution $\hat{\nabla}^{\prime,a}$
   on $U_{L^{\prime,a}}$ is used to define a metric $\hat{h}^{\prime,a}$
   on $U_{L^{\prime,a}}$.
 From the explicit expression in Part (a) in coordinates
   $(\hat{z}_1,u_3,p_{\hat{z}_1},p_{u_3})$,
  $\hat{\nabla}^{\prime,a}=O(|\hat{z}_1|^{(2-3m)/m})$ and,
   hence, $\rightarrow 0$ uniformly on $U_{L^{\prime,a}}$
  as $|\hat{z}_1|\rightarrow \infty$.
 Thus, the $\|\cdot\|_{C^0}$-norm, with respect to $\hat{h}^{\prime,a}$,
  of tensor product of of elements in
  $\{d\hat{z}_1,\,d\bar{\hat{z}}_1,\, du_3,\,
     dp_{\hat{z}_1},\, dp_{\bar{\hat{z}}_1},\,dp_{u_3}\}$
  are all uniformly bounded on $U_{L^{\prime,a}}$.
 It remains to analyze the large-$|\hat{z}_1|$ behavior of
  the coefficients of $(\hat{\nabla}^{\prime,a})^k\beta^{\prime,a}$
   in terms of the basis from these tensor products.
 Re-write
  $$
   \begin{array}{rcl}
    \beta^{\prime}|
     _{T^{\ast}L^{\prime,a,\diamond}_{\{|\hat{z}_1|\ge R^{\prime}_0\}}}
     & =
     & \frac{\sqrt{-1}}{2}\left(
         (1+O(|\hat{z}_1|^{2(1-m)/m})\,d\hat{z}_1\wedge d\bar{\hat{z}}_1
          + dp_{\hat{z}_1}\wedge d\bar{p}_{\hat{z}_1}
                           \right)\wedge dp_{u_3}             \\[1.2ex]
    && +\, \left(
            O(|\hat{z}_1|^{(1-m)/m})\,
              d\hat{z}_1\wedge d\bar{p}_{\hat{z}_1}
              - O(|\bar{\hat{z}}_1|^{(1-m)/m})\,
                  d\bar{\hat{z}}_1\wedge dp_{\hat{z}_1}
           \right) \wedge dp_{u_3}                                \\[1.2ex]
    && -\, \Imaginary(d\hat{z}_1\wedge dp_{\hat{z}_1})\wedge du_3 \\[1.2ex]
    && \mbox{as}\;\;|\hat{z}_1|\rightarrow \infty\,.
   \end{array}
  $$
 From
  the fact that
   $\hat{\nabla}^{\prime,a}=O(|\hat{z}_1|^{(2-3m)/m})$
    as $|\hat{z}|_1\rightarrow \infty$ and
  the identities
   $$
    \begin{array}{rcl}
     (\hat{\nabla}^{\prime,a})^2_{e_{i_1},e_{e_2}} \beta^{\prime,a}
      & =
      & \hat{\nabla}^{\prime,a}_{e_{i_1}}
        \hat{\nabla}^{\prime,a}_{e_{i_2}} \beta^{\prime,a}\,
        -\, \hat{\nabla}^{\prime,a}
             _{\hat{\nabla}^{\prime,a}_{e_{i_1}}e_{e_2}}
            \beta^{\prime,a}\,,                              \\[1.2ex]
     (\hat{\nabla}^{\prime,a})^3_{e_{i_1},e_{i_2},e_{i_3}}
       \beta^{\prime,a}
      & =
      & \hat{\nabla}^{\prime,a}_{e_{i_1}}
        \hat{\nabla}^{\prime,a}_{e_{i_2}}
        \hat{\nabla}^{\prime,a}_{e_{i_3}} \beta^{\prime,a}\,
        -\, \hat{\nabla}^{\prime,a}
             _{\hat{\nabla}^{\prime,a}_{e_{i_1}}e_{i_2}}
            \hat{\nabla}^{\prime,a}_{e_{i_3}}  \beta^{\prime,a}\,
        -\, \hat{\nabla}^{\prime,a}_{e_{i_2}}
            \hat{\nabla}^{\prime,a}
             _{\hat{\nabla}^{\prime,a}_{e_{i_1}}e_{i_3}}
            \beta^{\prime,a}                                 \\[1.2ex]
     && -\, \hat{\nabla}^{\prime,a}_{e_{i_1}}
            \hat{\nabla}^{\prime,a}
             _{\hat{\nabla}^{\prime,a}_{e_{i_2}}e_{i_3}}
            \beta^{\prime,a}\,
        +\, \hat{\nabla}^{\prime,a}
             _{\hat{\nabla}^{\prime,a}_{
                 \hat{\nabla}^{\prime,a}_{e_{i_1}}e_{i_2}
                                        }e_{i_3}}
            \beta^{\prime,a}\,
        +\, \hat{\nabla}^{\prime,a}
             _{\hat{\nabla}^{\prime,a}_{e_{i_2}}
                 \hat{\nabla}^{\prime,a}_{e_{i_1}} e_{i_3}}
            \beta^{\prime,a}\,,
    \end{array}
   $$
  one observes that
   each time a covariant derivative is applied to a term,
   the large-$|\hat{z}_1|$ behavior of the coefficients of
    the resulting terms
    either remains the same or
    shifts from $O(|\hat{z}_1|^{\bullet})$ to
     $O(|\hat{z}_1|^{\bullet\,-1})$.
 It follows that all coefficients of
   $\beta^{\prime}|
     _{T^{\ast}L^{\prime,a,\diamond}_{\{|\hat{z}_1|\ge R^{\prime}_0\}}}$
  are uniformly bounded on $U_{L^{\prime,a}}$.
 Thus,
  $\|(\hat{\nabla}^{\prime,a})^k\beta^{\prime,a}\|_{C^0}$,
    $k=0,\,1,\,2,\,3$, are uniformly bounded on $U_{L^{\prime,a}}$.

 \bigskip

 \noindent
 $(c)$
 {\it Bounds for
      $\|(\hat{\nabla}^{\prime,a,t})^k\beta^{\prime,a,t}\|_{C^0}$,
      $k=0,\,1,\,2,\,3$, for $U_{L^{\prime,at^{m-1}}}$.}
 \hspace{1em}
 The scaling argument of Joyce (cf.\ [Jo3: III, Sec.~6.3]) applies here
  only better since the scaling involved in our situation is only partial.
 To proceed, first note that
  for a diffeomorphism $\tau: M_1\rightarrow M_2$ on manifolds and
   $k$-tensor $\alpha$ and vector fields $X_1,\,\cdots,\,X_k$ on $M_1$,
  $(\tau_!\alpha)(\tau_{\ast}X_1,\,\cdots,\, \tau_{\ast}X_k)
    = (\tau^{\ast}(\tau^{-1})^{\ast}\alpha)(X_1,\,\cdots,\, X_k)
    = \tau(X_1,\,\cdots,\,X_k)$.
 Let
  $(e_1, e_2, e_3, e_4, e_5, e_6)$ be the coordinate frame
   $(\partial_{u_1},\partial_{u_2},\partial_{u_3},
     \partial_{v_1},\partial_{v_2},\partial_{v_3})$
   on $Y^{\prime}$  and
  $(e^1, e^2, e^3, e^4, e^5, e^6)$ its dual coframe.
 Since
  $t^{-1}_!\hat{\nabla}^{\prime,a,t}=\hat{\nabla}^{\prime,a}$
   as horizontal distributions and
  $t^{-1}_!\beta^{\prime,a,t}=t^2\beta^{\prime,a}$ by construction,
 after passing to $U_{L^{\prime,at^{m-1}}}$ and for $t\in (0,1]$,
  $$
   \begin{array}{l}
    ((\hat{\nabla}^{\prime,a,t})^k_{e_{i_1},\,\cdots,\,e_{i_k}}
      \beta^{\prime,a,t})(e_{k+1},\, e_{k+2},\, e_{k+3})
      \hspace{2em}\mbox{at}\hspace{1em}
      \pt\;\in\; U_{L^{\prime,a,t}}                      \\[1.2ex]
    \hspace{1em} =\;
    ((t^{-1}_!\hat{\nabla}^{\prime,a,t})^k
        _{t^{-1}_{\ast}e_{i_1},\,\cdots,\,t^{-1}_{\ast}e_{i_k}}
     (t^{-1}_!\beta^{\prime,a,t}))
        (t^{-1}_{\ast}e_{k+1},\,
         t^{-1}_{\ast}e_{k+2},\, t^{-1}_{\ast}e_{k+3})
      \hspace{2em}\mbox{at}\hspace{1em}
      t^{-1}\cdot \pt\;\in\; U_{L^{\prime,a}}            \\[1.2ex]
    \hspace{1em} \le\;
    t^{-k}\,
    ((\hat{\nabla}^{\prime,a})^k_{e_{i_1},\,\cdots,\,e_{i_k}}
      \beta^{\prime,a})(e_{k+1},\, e_{k+2},\, e_{k+3})
   \end{array}
  $$
  as sections in the contraction
   $((\otimes_kT^{\ast}Y^{\prime})\otimes\Omega^3(Y^{\prime}))
     \otimes(\otimes_{k+3}T_{\ast}Y^{\prime})
    \rightarrow C^{\infty}(Y^{\prime})$ through evaluation,
   over submanifolds
   $U_{L^{\prime,a,t}},\, U_{L^{\prime,a}}\subset Y^{\prime}$.
 Here, we used the fact that
  exactly one of $e_{k+1}$, $e_{k+2}$, $e_{k+3}$
  must be in $\{e_3,e_6\}$ for the above contraction to be non-zero.
 Since
   there exists a constant $C^{\prime}_1>0$ such that
    $\|e^i\|_{C^0_{U_{L^{\prime,a,t}}}}
      \le C^{\prime}_1\,t\,\|e^i\|_{C^0_{U_{L^{\prime,a}}}}$
     for $i=1,2,4,5$ and
    $=\|e^i\|_{C^0_{U_{L^{\prime,a}}}}$ for $i=3,6$,
  one has
  $$
   \|(\hat{\nabla}^{\prime,a,t})^k\beta^{\prime,a,t}\|
                                  _{C^0_{U_{L^{\prime,a,t}}}}\;
   \le\; C^{\prime}_2\,t^{-k}\,
         \|(\hat{\nabla}^{\prime,a})^k\beta^{\prime,a}\|
                                  _{C^0_{U_{L^{\prime,a}}}}\;
   \le\; A^{\prime}_4\,t^{-k}\,,
  $$
  for some constants $C^{\prime}_2>0$ and $A^{\prime}_4>0$,
  from the uniform bound for
      $\|(\hat{\nabla}^{\prime,a})^k\beta^{\prime,a}\|_{C^0}$,
      $k=0,\,1,\,2,\,3$, for $U_{L^{\prime,a}}$
  in Part (b).
 This proves the proposition.

\end{proof}

\bigskip

\section{Immersed Lagrangian deformations of a simple normalized
 branched covering of a special Lagrangian 3-sphere
 in a Calabi-Yau 3-fold and their deviation from Joyce's criteria.}

We construct in Sec.~3.1
 a natural family of immersed Lagrangian deformations of
  a simple normalized branched covering of a special Lagrangian 3-sphere
  in a Calabi-Yau 3-fold and
 compute in Sec.~3.2 - Sec.~3.4 their deviation from Joyce's criteria.

\bigskip

\subsection{Immersed Lagrangian deformations of a branched covering
            of a special Lagrangian 3-sphere in a Calabi-Yau 3-fold.
            }

Let $Z_0\simeq S^3\subset Y$ be a special Lagrangian $3$-sphere
 in a Calabi-Yau $3$-fold $Y=(Y, J, \omega, \Omega)$.
It follows from [McL] that $Z_0$ is rigid.
The canonical inclusion $J\cdot T_{\ast}Z_0\subset (T_{\ast}Y)|_{Z_0}$
 gives a distribution on $Y$ along $Z_0$ that is perpendicular to
 the canonical inclusion $T_{\ast}Z_0\subset (T_{\ast}Y)|_{Z_0}$.
Let $\Phi_{Z_0}:U_{Z_0}\rightarrow Y$ be a symplectomorphism
 from a neighborhood $U_{Z_0}$ of the zero-section of $T^{\ast}Z_0$
 to a neighborhood of $Z_0$ in $Y$ such that
 its restriction to the zero-section is $\id_{Z_0}:Z_0\rightarrow Z_0$
  and
 the embedding $\Phi_{Z_0,\ast}: T_{\ast}U_{Z_0}\rightarrow T_{\ast}Y$
  sends $T_0(\pi_{Z_0}^{-1}(z))$ to $J\cdot T_zZ_0$ for $z\in Z_0$.
Here $\pi_{Z_0}: U_{Z_0}\rightarrow Z_0$ is the restriction of
 the bundle map $T^{\ast}Z_0\rightarrow Z_0$ to $U_{Z_0}$
 and one endows $U_{Z_0}$ with a Calabi-Yau structure via $\Phi_{Z_0}$.

\bigskip

\begin{flushleft}
{\bf Simple normalized branched coverings of a sL 3-sphere
     in a Calabi-Yau 3-fold.}
\end{flushleft}
Let
 \begin{itemize}
  \item[$\cdot$]
   $X$ be a closed oriented $3$-manifold;

  \item[$\cdot$]
   $f:X\rightarrow Z_0$ be a smooth, orientation-preserving, finite,
    branched covering of $Z_0$,
    with branch locus $\Gamma\subset Z_0$ (downstairs)
    and $\tilde{\Gamma}\subset X$ (upstairs);

  \item[$\cdot$]
   $\Gamma=\amalg_{i=1}^n\Gamma_i$ and
   $\tilde{\Gamma}=\amalg_{j=1}^{\tilde{n}_0}\tilde{\Gamma}_j$
    be the decomposition of $\Gamma$ and $\tilde{\Gamma}$
    into connected components.
 \end{itemize}

\begin{definition}
{\bf [simple normalized branched covering].} {\rm
 $f:X\rightarrow Z_0$
  is called a {\it simple normalized} branched covering of $Z_0$ in $Y$
 if it satisfies in addition the following conditions:
 \begin{itemize}
  \item[$\cdot$]
   [{\it simple}$\,$]\hspace{1em}
   Each $\Gamma_i$, $\tilde{\Gamma}_j$
    is a smooth $1$-submanifold of $X$ isomorphic to a circle $S^1$ and
   $f$ maps each $\tilde{\Gamma}_j$ diffeomorphically
    to some $\Gamma_i$;

  \item[$\cdot$]
   [{\it normalized}$\,$]\hspace{1em}
   When $f:\tilde{\Gamma}_j\rightarrow \Gamma_i$,
   there exist
    a tubular neighborhood
     $\nu_X(\tilde{\Gamma}_j)$ of $\tilde{\Gamma}_j$ in $X$,
    a tubular neighborhood of $\nu_{Z_0}^j(\Gamma_i)$
     of $\Gamma_i$ in $Z_0$,
    coordinates $(x_1,x_2,x_3)\in {\Bbb R}^2\times({\Bbb R}/l_i)$
     on $\nu_{X}(\tilde{\Gamma}_j)$, and
    coordinates $(u_1,u_2,u_3)\in {\Bbb R}^2\times({\Bbb R}/l_i)$
     on $\nu_{Z_0}^j(\Gamma_i)$,
     where $l_i$ is the length of $\Gamma_i$ in $Y$,
    such that the following holds:
    \begin{itemize}
     \item[$\cdot$]
      The restriction of
       $(\frac{\partial}{\partial u_1}, \frac{\partial}{\partial u_2}
         \frac{\partial}{\partial u_3})$ to $\Gamma_i$
       is an orthonormal frame along $\Gamma_i$ with
      $\frac{\partial}{\partial u_3}$ tangent to $\Gamma_i$.

     \item[$\cdot$]
      The restriction
       $f:\nu_X(\tilde{\Gamma}_j)\rightarrow \nu_{Z_0}^j(\Gamma_i)$
       is given by
       $$
        (u_1,u_2,u_3)\;
         =\; f(x_1,x_2,x_3)\;
         =\; \left(
              \Real\left((x_1+\sqrt{-1}x_2)^{m_j}\right),\,
              \Imaginary\left((x_1+\sqrt{-1}x_2)^{m_j}\right)\,;\, x_3
             \right)
       $$
       for some $m_j\in {\Bbb Z}_{\ge 2}$.
    \end{itemize}
   $m_j$ is called the {\it degree/multiplicity/order of $f$
   around $\tilde{\Gamma}_j$}.
 \end{itemize}
}\end{definition}

\begin{remark} {$[$weaker condition$]$.} {\rm
 In the above definition, the requirement that
  \begin{itemize}
   \item[]
   `The restriction of
     $(\frac{\partial}{\partial u_1}, \frac{\partial}{\partial u_2}
          \frac{\partial}{\partial u_3})$ to $\Gamma_i$
        is an orthonormal frame along $\Gamma_i$ with
       $\frac{\partial}{\partial u_3}$\\
       $\mbox{$\,\,$}$tangent to $\Gamma_i$.'
  \end{itemize}
  is only for the simplicity of the presentation in this note.
 It can be replaced by the weaker requirement that
  \begin{itemize}
   \item[]
   `The restriction of $\frac{\partial}{\partial u_3}$ to $\Gamma_i$
    is tangent to $\Gamma_i$.'
  \end{itemize}
 Furthermore,
  since the construction below is local in nature and
   the branched covering map has finite degree,
  the condition that
  \begin{itemize}
   \item[]
   `$f$ maps each $\tilde{\Gamma}_j$ diffeomorphically to some $\Gamma_i$'
  \end{itemize}
  can be weakened to the condition
  \begin{itemize}
   \item[]
   `$f$ maps each $\tilde{\Gamma}_j$ to some $\Gamma_i$
    as a smooth finite covering map'.
  \end{itemize}
}\end{remark}

\bigskip

\begin{flushleft}
{\bf Auxiliary flat Lagrangian neighborhoods
     $Y^{\prime,j}$, $j=1,\,\cdots,\,\tilde{n}_0$,
     for the branch locus $\Gamma\subset Y$ of $f$.}
\end{flushleft}
The inclusion $\nu_{Z_0}^j(\Gamma_i)\subset Z_0$ induces an inclusion
 $T^{\ast}\nu_{Z_0}^j(\Gamma_i)\subset T^{\ast}Z_0$.
Let
 $U_{\nu_{Z_0}^j(\Gamma_i)} := U_{Z_0}\cap T^{\ast}\nu_{Z_0}^j(\Gamma_i)$
  and
 $\Phi_{\nu_{Z_0}(\Gamma_i)}:= \Phi_{Z_0}|_{U_{\nu_{Z_0}^j(\Gamma_i)}}$.
Shrinking all these tubular neighborhoods if necessary,
 one has then a symplectomorphism between tubular neighborhoods
 $$
  \Phi_{\nu_{Z_0}^j(\Gamma_i)}\;:\;
   U_{\nu_{Z_0}^j(\Gamma_i)}\;
      \longrightarrow\; \nu_Y^j(\Gamma_i)\,\subset\, Y\,,
 $$
 whose restriction to the zero-section is the identity map
  $\nu_{Z_0}^j(\Gamma_i)\rightarrow \nu_{Z_0}^j(\Gamma_i)$.

Recall the coordinates $(u_1,u_2,u_3)$ on $\nu_{Z_0}^j(\Gamma_i)$
 and the construction in Sec.~2.2.
Identify the canonical coordinates
 $(u_1, u_2, u_3, p_{u_1}, p_{u_2}, p_{u_3})$ on
 $U_{\nu_{Z_0}^j(\Gamma_i)}\subset T^{\ast}\nu_{Z_0}^j(\Gamma_i)$ here
 with the coordinates $(u_1, u_2, u_3, v_1, v_2, v_3)$ on $Y^{\prime}$
 there.
Then the Calabi-Yau structure
 $(J^{\prime}, \omega^{\prime},\Omega^{\prime})$ on $Y^{\prime}$
 in Sec.~2.2 induces a Calabi-Yau structure,
  denoted also by $(J^{\prime},\omega^{\prime},\Omega^{\prime})$,
 on $U_{\nu_{Z_0}^j(\Gamma_i)}$ that is flat.
Denote
 $Y^{\prime,j} := U_{\nu_{Z_0}^j(\Gamma_i)}$  and
 $\Phi_{\nu_{Z_0}^j(\Gamma_i)}: U_{\nu_{Z_0}^j(\Gamma_i)}
                                \rightarrow \nu_Y^j(\Gamma_i)$
  by
  $$
   \Upsilon^j\;:\;
    (Y^{\prime,j},\omega^{\prime})\;
       \longrightarrow\; (\nu_Y^j(\Gamma_i), \omega)\,\subset\, Y\,.
  $$
Then
 the K\"{a}hler property of a Calabi-Yau structure and
  the special Lagrangian property with respect to a calibration
  imply that
 $$
  \Upsilon^{j\ast}(J,\Omega)|_{\Gamma_i}\;
   =\; (J^{\prime}, \Omega^{\prime})|_{\Gamma_i}\,.
 $$
In this sense,
 $(Y^{\prime,j},J^{\prime}, \omega^{\prime}, \Omega^{\prime})$,
  denoted collectively also by $Y^{\prime,j}$,
 is an {\it infinitesimal flat approximation} of
 $(\nu_Y^j(\Gamma_i), J,\omega,\Omega)$
 ($=: \nu_Y^j(\Gamma_i)$ collectively) in $Y$
 via $\Upsilon^j$  and
one can identify $Y^{\prime,j}$ with a tubular neighborhood of
 the zero-section of the orthogonal complement
 $(\frac{\partial}{\partial u_3}|_{\Gamma_i})^{\perp}$
 of the nowhere-zero section
 $\frac{\partial}{\partial u_3}|_{\Gamma_i}$ in $(T_{\ast}Y)|_{\Gamma_i}$,
 with the induced flat Calabi-Yau structure.

\bigskip

\begin{flushleft}
{\bf Immersed Lagrangian deformations $f^t$ of $f$ from gluing.}
\end{flushleft}
The restriction
 $f:\nu_X(\tilde{\Gamma}_j)\rightarrow \nu_{Z_0}^j(\Gamma_i)\subset Y$
 defines a special Lagrangian map
 $$
  \psi^j\; :\; \nu_X(\tilde{\Gamma}_j)\;
          \longrightarrow\; \nu_{Z_0}^j(\Gamma_i)\subset Y^{\prime,j}
 $$
 such that $f= \Upsilon^j\circ\psi^j$ on $\nu_X(\tilde{\Gamma}_j)$.
We'll glue the immersed Lagrangian deformation $\psi^{j,a_jt^{m_j-1}}$
 of $\psi^j$ as constructed in Sec.~2.2 to $f$
 to give an immersed Lagrangian deformation $f^t$ of $f$.

The following class of cutoff functions with their first three derivatives
 bounded in the best possible manner
 is the basis of our gluing construction and some later estimates:

\begin{lemma}
{\bf [cutoff function].}
 Given $\delta>0$ and $R_0>0$,
 let
  $t\in (0,\delta)$ and
  $0<b_1^t<b_2^t <R_0$ be constants that depend smoothly on $t$
  such that $b_1^t,\, b_2^t,\, b_1^t/b_2^t\rightarrow 0$
   when $t\rightarrow 0$.
 Then, there exist
   smooth functions $\chi^t:(0,R_0)\rightarrow [0,1]$
    that depend smoothly on $t$ as well and
   a constant $C_0>0$, independent of $t$,
  such that the following hold:
  \begin{itemize}
   \item[$\cdot$]
    $\chi^t:(0,b_1^t]\rightarrow \{1\}$.

   \item[$\cdot$]
    $\chi^t:[b_2^t,R_0)\rightarrow \{0\}$.

   \item[$\cdot$]
    For $t$ small enough,
     $\left|\frac{d^k}{dr^k}\chi^t(r)\right|
       \le C_0\cdot (b_2^t)^{-k}$, for $k=1,\,2,\,3$.
  \end{itemize}
\end{lemma}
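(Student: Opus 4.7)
The plan is to build $\chi^t$ from a single fixed model cutoff by an affine rescaling that puts the transition inside $[b_1^t,b_2^t]$, and then to use the hypothesis $b_1^t/b_2^t\to 0$ to trade the ``natural'' derivative bound involving $(b_2^t-b_1^t)^{-1}$ for the stated bound involving $(b_2^t)^{-1}$.

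First I would fix, once and for all, a smooth function $\eta:{\Bbb R}\rightarrow [0,1]$ with $\eta\equiv 1$ on $(-\infty,0]$, $\eta\equiv 0$ on $[1,\infty)$, and $\eta$ monotone on $[0,1]$; such an $\eta$ exists by a standard bump/mollifier construction (e.g.\ the integral of a rescaled, shifted copy of $e^{-1/r(1-r)}$ normalized to have total mass $1$). Set
$$
  \chi^t(r)\;:=\;\eta\!\left(\frac{r-b_1^t}{b_2^t-b_1^t}\right)
  \hspace{2em}\mbox{for $r\in (0,R_0)$}\,.
$$
Since $b_1^t$ and $b_2^t$ depend smoothly on $t$ and $b_2^t-b_1^t>0$, $\chi^t$ is smooth in $(r,t)$. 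By definition of $\eta$, $\chi^t\equiv 1$ on $(0,b_1^t]$ and $\chi^t\equiv 0$ on $[b_2^t,R_0)$, verifying the first two bullets.

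Next I would establish the derivative bound. By the chain rule,
$$
  \frac{d^k}{dr^k}\chi^t(r)\;
    =\;\frac{1}{(b_2^t-b_1^t)^k}\,
       \eta^{(k)}\!\left(\frac{r-b_1^t}{b_2^t-b_1^t}\right)\,,
$$
so setting $C_\eta:=\max_{k=1,2,3}\|\eta^{(k)}\|_{C^0({\Bbb R})}<\infty$ gives
$\bigl|\tfrac{d^k}{dr^k}\chi^t(r)\bigr|\le C_\eta\,(b_2^t-b_1^t)^{-k}$ for $k=1,2,3$. The remaining task is to convert this into a bound in terms of $b_2^t$ alone. Writing $b_2^t-b_1^t=b_2^t\,(1-b_1^t/b_2^t)$ and using the hypothesis $b_1^t/b_2^t\rightarrow 0$, there exists $\delta'\in(0,\delta)$ such that for all $t\in(0,\delta')$ one has $b_1^t/b_2^t\le 1/2$, hence $(b_2^t-b_1^t)^{-1}\le 2(b_2^t)^{-1}$. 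Taking $C_0:=8\,C_\eta$ then yields $\bigl|\tfrac{d^k}{dr^k}\chi^t(r)\bigr|\le C_0\,(b_2^t)^{-k}$ uniformly in $t\in(0,\delta')$ and $r\in(0,R_0)$.

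The only substantive point in the argument is recognizing that the stated bound $(b_2^t)^{-k}$ is weaker than the sharp bound $(b_2^t-b_1^t)^{-k}$ coming out of the construction, and that the weakening is precisely compensated by the hypothesis $b_1^t/b_2^t\rightarrow 0$; no obstacle is anticipated. A minor bookkeeping matter is that the constant $C_0$ must not depend on $t$, which is automatic since $\eta$ is fixed independently of $t$ and the threshold $1/2$ used above depends only on the asymptotic hypothesis on $b_1^t/b_2^t$.
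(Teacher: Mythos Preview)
Your proof is correct. It takes a different route from the paper's: the paper builds $\chi^t$ by first writing down an explicit piecewise-linear profile $\hat\chi^t$ supported in $[b_1^t, b_2^t]$ (with nodes near fixed fractions $\tfrac{1}{8},\tfrac{1}{4},\tfrac{1}{2},\tfrac{3}{4},\tfrac{7}{8}$ of $b_2^t$, after using $b_1^t < \tfrac{1}{8}b_2^t$ from the hypothesis), smoothing it to $\hat\chi^{t,\sim}$, and then setting $\chi^t(r) = 1 + \int_0^r\!\int_0^{r'}\!\int_0^{r''}\hat\chi^{t,\sim}\,dr'''\,dr''\,dr'$; the free amplitudes in $\hat\chi^t$ are tuned so that this triple antiderivative reaches $0$ with vanishing first and second derivative at $r=b_2^t$. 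Your approach---affinely rescaling a single fixed model cutoff $\eta$---is the standard construction and is more direct: it yields the sharp bound $\bigl|\tfrac{d^k}{dr^k}\chi^t\bigr|\le C_\eta(b_2^t-b_1^t)^{-k}$ immediately, which you then relax to $C_0(b_2^t)^{-k}$ using $b_1^t/b_2^t\to 0$. The paper's version makes the third derivative explicit as a prescribed (smoothed) piecewise-linear shape, but that extra information is not exploited later; your argument is shorter and equally valid.
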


\begin{proof}
 For $t$ small enough,
  one may assume that $0<b_1^t<\frac{1}{8}\cdot b_2^t$.
 Consider the following piecewise linear continuous function
  $$
   \hat{\chi}^t(r)\;=\; \left\{
    \begin{array}{llllll}
     0
      &&& \mbox{for}  && 0 < r < b_1^t\,,    \\[2ex]
     \frac{a_1}{r_1-b_1^t}\,(r-b_1^t)
      &&& \mbox{for}  && b_1^t\le r < r_1\,, \\[2ex]
     \frac{-a_1}{r_2-r_1}\,(r-r_1)\,+\, a_1
      &&& \mbox{for}  && r_1\le r < r_2\,,   \\[2ex]
     \frac{a_2}{r_3-r_2}\,(r-r_2)
      &&& \mbox{for}  && r_2\le r < r_3\,,   \\[2ex]
     \frac{-a_2}{r_4-r_3}\,(r-r_3)\,+\, a_2
      &&& \mbox{for}  && r_3\le r < r_4\,,   \\[2ex]
     \frac{a_3}{r_5-r_4}\,(r-r_4)
      &&& \mbox{for}  && r_4\le r < r_5\,,   \\[2ex]
     \frac{-a_3}{b_2^t-r_5}\,(r-r_5)\,+\, a_3
      &&& \mbox{for}  && r_5\le r < b_2^t\,, \\[2ex]
     0
      &&& \mbox{for}  && b_2^t\le r < R_0\,.
    \end{array}
             \right.
  $$
  which depends on the parameters
   $a_1, a_3<0;\, a_2>0;\, b_1^t< r_1 <  r_2 < r_3 < r_4 < r_5 < b_2^t$
    with $r_1$ (resp.\ $r_2,\, r_3,\, r_4,\, r_5$)
    in a small neighborhood of $\frac{1}{8}\,b_2^t$
    (resp.\ $\frac{1}{4}\,b_2^t,\, \frac{1}{2}\,b_2^t,\,
             \frac{3}{4}\,b_2^t,\, \frac{7}{8}\,b_2^t$).
 By an appropriate adjustment of these parameters and
  a smoothing $\hat{\chi}^{t,\sim}$ of $\hat{\chi}^t$
   in the $C^{\infty}$-topology,
 one can choose $\chi^t$ as required by taking
  $$
   \chi^t(r)\;
    =\; 1\,
        +\, \int_0^r \int_0^{r^{\prime}} \int_0^{r^{\prime\prime}}\,
             \hat{\chi}^{t,\sim}(r^{\prime\prime\prime})\,
             dr^{\prime\prime\prime} dr^{\prime\prime} dr^{\prime}\,.
  $$
\end{proof}

We'll denote $d\chi^t/dr$ and $d^2\chi^t/dr^2$
 also by $\dot{\chi}^t$ and $\ddot{\chi}^t$ respectively.

\begin{definition}
{\bf [immersed Lagrangian deformations $f^t$ of $f$ from gluing].} {\rm
 Let
  $$
   \begin{array}{rclccrcl}
    X^{\diamond}   & := & X-\tilde{\Gamma}\,, &&
     & Z_0^{\diamond} & := & Z_0-\Gamma\,, \\[1.2ex]
    \nu_X(\tilde{\Gamma}_j)^{\diamond}
     & := & \nu_X(\tilde{\Gamma}_j)-\tilde{\Gamma}_j\;
            =\; \nu_X(\tilde{\Gamma}_j)\cap X^{\diamond}\,, &&
     & \nu_{Z_0}^j(\Gamma_i)^{\diamond}
     & := & \nu_{Z_0}^j(\Gamma_i)^{\diamond}\;
            =\; \nu_{Z_0}^j(\Gamma_i)\cap Z_0^{\diamond}\,.
   \end{array}
  $$
 The restriction $f^{\diamond}:X^{\diamond}\rightarrow Z_0^{\diamond}$
  of $f$ to $X^{\diamond}$ is a covering map and, hence,
  induces a covering map
  $$
   f^{\diamond}_{!}\; :\; T^{\ast}X^{\diamond}\;
     \longrightarrow\; T^{\ast}Z_0^{\diamond}
  $$
  that is a local symplectomorphism.
 Recall Sec.~2.2 and the notations therein.
 For a pair $(j,i)$ with $f(\tilde{\Gamma}_j)=\Gamma_i$,
 let $R_0>0$ and $a_j>0$ be small enough so that
  \begin{itemize}
   \item[$\cdot$]
    the solid torus $\{|\hat{z}_1|\le R_0\}$ around $\Gamma_i$ in $Z_0$
     is contained in $\nu_{Z_0}^j(\Gamma_i)$  and,
    for notational convenience,
    we shrink $\nu_{Z_0}^j(\Gamma_i)$
     so that they are the same from now on,

   \item[$\cdot$]
    the solid torus $\{ |x_1+\sqrt{-1}x_2|\le R_0^{1/m_j}\}$ around
    $\tilde{\Gamma}_j$ in $X$ is contained in $\nu_X(\tilde{\Gamma}_j)$
    and we now shrink $\nu_X(\tilde{\Gamma}_j)$
     so that they are the same from now on,

   \item[$\cdot$]
    the smooth embedded special Lagrangian submanifold with boundary
     $$
      L^{\prime,a_j}_{R_0}\;
       :=\; \{\, |\hat{z}_1|\le R_0\,,\;
                 a_j\hat{z}_1-\hat{z}_2^{m_j}=0\,,\; v_3=0\,\}\;
       \subset\;  Y^{\prime}
     $$
     is contained in $Y^{\prime,j}$.
  \end{itemize}
  Recall the graph $\Gamma(\alpha^j)$ of the associated exact $1$-form
   $$
    \alpha^j\;
      =\; dh^j\;
     :=\; \Real\left(
          \frac{m_j}{m_j+1}\,a_j^{1/m_j}\,
                    d\left((x_1+\sqrt{-1}x_2)^{m_j+1}\right)
           \right)
   $$
   on $\nu_X(\tilde{\Gamma}_j)$
   whose restriction over $\nu_X(\tilde{\Gamma}_j)^{\diamond}$ is mapped to
    $L^{\prime, a}_{R_0}\cap \nu_{Z_0}^j(\Gamma_i)^{\diamond}$
    under $f^{\diamond}_!$.
   (Cf.\ Lemma~2.2.2.)
 While $f^{\diamond}_!$ does not extend to fibers of $T^{\ast}X$
  over $\tilde{\Gamma}$,
 it follows from the explicit study in Sec.~2.2 that
  the restriction of $f^{\diamond}_!$
    on $\Gamma(\alpha^j)|_{\nu_X(\tilde{\Gamma}_j)^{\diamond}}$
   extends to the whole $\Gamma(\alpha^j)$ and
  defines a smooth Lagrangian embedding
   $$
    \psi^{j,a_j}\; :\;  \nu_X(\tilde{\Gamma}_j)\;
                                \hookrightarrow\;  Y^{\prime,j}
   $$
   with the image $L^{\prime,a_j}_{R_0}$ and satisfying
   $\pi\circ \psi^{j,a_j} = f|_{\nu_X(\tilde{\Gamma}_j)}$,
   where $\pi:T^{\ast}Z_0\rightarrow Z_0$ is the bundle map.

 Let
   $t\in (0,\delta)$, $0< b_1^t< b_2^t< R_0$, and
   $\chi^t:(0,R_0)\rightarrow [0,1]$ with $r=|(x_1+\sqrt{-1}x_2)^{m_j}|$
  be as in Lemma~3.1.3.
 Recall the partial scaling
  $t\cdot L^{\prime,a_j} := L^{\prime, a_jt^{m_j-1}}$ of $L^{\prime,a_j}$
   in $Y^{\prime}$;
  cf.\ Notation~2.2.3.
 The associated $1$-form on $\nu_X(\tilde{\Gamma}_j)$
  is thus $t^{(m_j-1)/m_j}\alpha^j = d(t^{(m_j-1)/m_j}h^j)$\,.
 Define
  $$
   h^{j,t}\;
    :=\; \chi^t\cdot (t^{(m_j-1)/m_j}\cdot h^j)
    \hspace{2em}\mbox{and}\hspace{2em}
   \alpha^{j,t}\;=\; dh^{j,t}\,.
  $$
 Then,
  since the two graphs
   $\Gamma(\alpha^{j,t})\,,\,
    \Gamma(t^{(m_j-1)/m_j}\cdot\alpha^j)\,
    \subset\,T^{\ast}\nu_X(\tilde{\Gamma}_j)$ of $1$-forms
   are identical over
   $\{|x_1+\sqrt{-1}x_2|^{m_j}\le b_1^t\}\subset \nu_X(\tilde{\Gamma}_j)$,
  the restriction of $f^{\diamond}_!$
    on $\Gamma(\alpha^{j,t})|_{\nu_X(\tilde{\Gamma}_j)^{\diamond}}$
   extends to the whole $\Gamma(\alpha^{j,t})$ as well.
 Since
   $\Gamma(t^{(m_j-1)/m_j}\cdot\alpha^j)$ defines also
    a smooth Lagrangian embedding of $\nu_X(\tilde{\Gamma}_j)$ under
    the extension of $f^{\diamond}_!$
    and
   $f$ is a smooth Lagrangian immersion
    of $\nu_X(\tilde{\Gamma}_j)^{\diamond}$,
  $f^{\diamond}_!$ defines now a smooth Lagrangian immersion:
   (following the notation of Sec.~2.2)
   $$
    \psi^{j,a_jt^{m_j-1}}\; :\;  \nu_X(\tilde{\Gamma}_j)\;
                            \hookrightarrow\;  Y^{\prime,j}
   $$
   with the image $L^{\prime,a_jt^{(m_j-1)/m_j}}_{R_0}$ and
   satisfying also
    $\pi\circ \psi^{j,a_jt^{m_j-1}} = f|_{\nu_X(\tilde{\Gamma}_j)}\,$.
 After the post-composition with $\Upsilon^j: Y^{\prime,j}\rightarrow Y$,
  one has then a smooth immersion
   $$
    \Upsilon^j\circ \psi^{j,a_jt^{m_j-1}}\; :\;
     \nu_X(\tilde{\Gamma}_j)\; \longrightarrow\; Y
   $$
  of Lagrangian submanifold with boundary.
 Since
  $$
   \chi^t|_{[b_2^t,R_0)}\equiv 0
    \hspace{1em}\mbox{and}\hspace{1em}
   (\Upsilon^j\circ \psi^{j,a_jt^{m_j-1}})|
               _{\{ b_2^t\le |(x_1+\sqrt{-1}x_2)^{m_j}| < R_0\}}
   \equiv f|_{\{ b_2^t\le |(x_1+\sqrt{-1}x_2)^{m_j}| < R_0\}}\,,
  $$
 it follows that
  $$
   \coprod_{j=1}^{\tilde{n}_0}\,(\Upsilon^j\circ \psi^{j,a_jt^{m_j-1}})\;
    :\; \coprod_{j=1}^{\tilde{n}_0}\, \nu_X(\tilde{\Gamma}_j)\;
    \longrightarrow\; Y
  $$
  can be extended by $f$ on $X-\coprod_j\,\nu_X(\tilde{\Gamma}_j)$
  to a smooth Lagrangian immersion
  $$
   f^t\; :\; N^t\,=\,X\; \longrightarrow\; Y\,.
  $$
 In other words,
  recall
    the Lagrangian neighborhood $\Phi_{Z_0}:U_{Z_0}\rightarrow Y$
     of $Z_0\subset Y$ and
    the projection map $\pi_{Z_0}:U_{Z_0}\rightarrow Z_0$
   at the beginning of this subsection.
 Then,
  the smooth function
   $$
    \coprod_{j=1}^{\tilde{n}_0}\,h^{j,t}\; :\;
     \coprod_{j=1}^{\tilde{n}_0}\nu_X(\tilde{\Gamma}_j)\;
      \longrightarrow\; {\Bbb R}
   $$
   extends to a smooth function
   $$
    h^t\; :\; X\; \longrightarrow\; {\Bbb R}
   $$
   by $0$;
  $f^{\diamond}_!$ extends to $f_!$ on the graph $\Gamma(dh^t)$ of $dh^t$
   in $T^{\ast}X$; and
  $$
   \,\,f^t\;=\; \Phi_{Z_0}\circ f_!\circ dh^t\,,
  $$
   where $dh^t$ is regarded
    as a section $X\rightarrow T^{\ast}X$ of $T^{\ast}X$ over $X$.
}\end{definition}

By construction,
 $f^t\rightarrow f=:f^0$, as $t\rightarrow 0$,
  both in the sense of currents
  and in the sense of $C^{\infty}$-topology on any compact subset
   of $X^{\diamond}$, and
 $\pi_{Z_0}\circ f^t=f$ for all $t\in (0,\delta)$.

\begin{notation}
{\bf [pull-back metric].} {\rm
 Let $g_Y$ be the Calabi-Yau metric on $Y$ determined by $(J,\omega)$.
 Denote by $g^t$ the pull-back metric $(f^t)^{\ast}g_Y$ on $N^t=X$.
}\end{notation}

\begin{remark}
{$[$expression in $Y^{\prime,j}$$]$.} {\rm
 By construction, the only difference of $f^t$ and $f=: f^0$
  lies in $\nu_{X}(\tilde{\Gamma})$, whose image in $Y$ lies in
  $\cup_j(\Upsilon^j(Y^{\prime,j}))$.
 In terms of the complex-real coordinates
  $(u_1+\sqrt{-1}u_2,u_3,v_1-\sqrt{-1}v_2,v_3)
   = (\hat{z}_1,u_2,\hat{z}_2,v_3)
   = (r_1e^{\sqrt{-1}\theta_1}, u_3, r_2e^{\sqrt{-1}\theta_2},v_3)$
  on $Y^{\prime,j}$ and, hence, on $\Upsilon^j(Y^{\prime,j})$,
 the immersed image $f^t(N^t)\cap \Upsilon^j(Y^{\prime,j})$
  is given by
  \begin{eqnarray*}
   \lefteqn{f^t(N^t)\cap \Upsilon^j(Y^{\prime,j})  }      \\[1,2ex]
    &&\hspace{-2ex} =\;
     \left\{
      (r_1e^{\sqrt{-1}\theta_1},\, u_3,\,
       r_2e^{\sqrt{-1}\theta_2},\, 0)\,
     \left|
      \begin{array}{l}
       \cdot\; r_2\;
        =\; a^{1/m_j}\,t^{(m_j-1)/m_j}\,
            \left[\frac{m_j}{m_j+1}\,\dot{\chi}^t(r_1)\,
                  +\,\chi^t(r_1)\,r_1^{1/m_j}\right]   \\[1.2ex]
       \cdot\; m_j\,\theta_2\; =\; \theta_1\;\;\;\;
                              (\,\mbox{\rm mod}\; 2\pi\,)
      \end{array}\right.\hspace{-1ex}\right\}.
  \end{eqnarray*}
}\end{remark}

\begin{notation}
{\bf [$f^t$ in three parts].} {\rm
 With the notation in this subsection,
 let
  $$
   \begin{array}{lcl}
    P_j^t & :=
    & \{(x_1,x_2,x_3)\in \nu_X(\tilde{\Gamma}_j)\; :\;
      0\,\le\,|(x_1+\sqrt{-1}x_2)^{m_j}|\,\le\, b_1^t \}\,, \\[1.2ex]
    Q_j^t & :=
    & \{(x_1,x_2,x_3)\in \nu_X(\tilde{\Gamma}_j)\; :\;
      b_1^t\,\le\,|(x_1+\sqrt{-1}x_2)^{m_j}|\,\le\, b_2^t \}\,,
     \\[1.2ex]
    K^t   & := & X-\coprod_{j=1}^{\tilde{n}_0}(P_j\cup Q_j)\,.
   \end{array}
  $$
 Then, from the gluing construction of $f^t$,
  $$
   f^t\;
    =\;  \left( \cup_{j=1}^{\tilde{n_0}}
                (f^t|_{P_j^t}\cup f^t|_{Q_j^t} ) \right)
         \bigcup f^t|_{K^t}\,.
  $$
}\end{notation}

\bigskip

\subsection{Estimating {\it Im}$\,\Omega|_{N^t}$.}

We now estimate the Sobolev norms of $\Imaginary\Omega|_{N^t}$
 in Criterion (i) of Theorem 1.1.
The discussion is based upon Taylor's formula  and
 a finite-dimensional nature of the problem.

\bigskip

\begin{flushleft}
{\bf Taylor's formula.}
\end{flushleft}
Let
 $f$ be an ${\Bbb R}$-valued function
  on an open subset $S\subset {\Bbb R}^m$ and
 ${\mathbf y}:= (y_1,\,\cdots\,,y_m)$ be the coordinates on ${\Bbb R}^m$.
For ${\mathbf a}\in S$ and ${\mathbf t}\in {\Bbb R}^m$,
 if all $l$-th order partial derivatives of $f$ exist at ${\mathbf a}$,
 then write
  $$
    f^{(l)}({\mathbf a};{\mathbf t})\;
    :=\; \sum_{j_l=1}^m\,\cdots\,\sum_{j_1=1}^m\,
          \frac{\partial^l\,f}
               {\partial y_{j_l}\,\cdots\,\partial y_{j_1}}\,
             ({\mathbf a})\,
          t_{j_1}\,\cdots\,t_{j_l}\,.
  $$

\begin{theorem}
{\bf [Taylor's formula with remainder].}
{\rm (E.g.\ [Ap: Theorem~12.14].)}
 Assume that $f$ and all its partial derivatives of order $\le l$
  are differentiable at each point of an open set $S$ in ${\Bbb R}^m$.
 If ${\mathbf a}$ and ${\mathbf b}$ are two points of $S$ such that
  the line segment $\overline{{\mathbf a},\!{\mathbf b}}$ in ${\Bbb R}^m$
   that connects ${\mathbf a}$ and ${\mathbf b}$
  is contained in $S$,
 then there is a point
  ${\mathbf c}\in \overline{{\mathbf a},\!{\mathbf b}}$
  such that
  $$
   f({\mathbf b})\;
    =\; \sum_{i=0}^l\,
          \frac{1}{i!}\,
            f^{(i)}({\mathbf a}; {\mathbf b}-{\mathbf a})\:
        +\: \frac{1}{(l+1)!}\,
              f^{(l+1)}({\mathbf c}; {\mathbf b}-{\mathbf a})\,.
  $$
\end{theorem}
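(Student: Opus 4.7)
The plan is to reduce the multivariable Taylor formula to the classical one-dimensional version via restriction to the segment $\overline{\mathbf{a},\!\mathbf{b}}$. First I would introduce the auxiliary scalar function $g:[0,1]\to{\Bbb R}$ defined by
$$g(s)\;:=\; f\bigl(\mathbf{a} + s(\mathbf{b}-\mathbf{a})\bigr)\,.$$
The hypothesis that $\overline{\mathbf{a},\!\mathbf{b}}\subset S$ guarantees $g$ is well-defined, and the two boundary evaluations $g(0)=f(\mathbf{a})$, $g(1)=f(\mathbf{b})$ are what one wants to relate.

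The key computational step is to verify, by induction on $i=0,1,\ldots,l+1$, the identity
$$g^{(i)}(s)\;=\; f^{(i)}\bigl(\mathbf{a}+s(\mathbf{b}-\mathbf{a});\,\mathbf{b}-\mathbf{a}\bigr)\,.$$
The inductive step is the chain rule applied to the composition of the scalar function $\partial^{i-1}f/(\partial y_{j_{i-1}}\cdots\partial y_{j_1})$ with the affine map $s\mapsto \mathbf{a}+s(\mathbf{b}-\mathbf{a})$, yielding
$$\frac{d}{ds}\left[\frac{\partial^{i-1}f}{\partial y_{j_{i-1}}\cdots\partial y_{j_1}}\bigl(\mathbf{a}+s(\mathbf{b}-\mathbf{a})\bigr)\right]\;=\;\sum_{j_i=1}^m\frac{\partial^{i}f}{\partial y_{j_i}\cdots\partial y_{j_1}}\bigl(\mathbf{a}+s(\mathbf{b}-\mathbf{a})\bigr)(b_{j_i}-a_{j_i})\,.$$
Multiplying by $(b_{j_1}-a_{j_1})\cdots(b_{j_{i-1}}-a_{j_{i-1}})$ and summing over $j_1,\ldots,j_{i-1}$ passes the induction. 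This is where the precise hypothesis of the theorem enters: differentiability (not mere existence) of the order-$\le l$ partial derivatives of $f$ on $S$ is exactly what licenses this chain-rule step up to $i=l+1$.

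Once the formula for $g^{(i)}$ is established, I would apply the one-dimensional Taylor theorem with Lagrange remainder to $g$ on $[0,1]$ to obtain a $\theta\in(0,1)$ with
$$g(1)\;=\;\sum_{i=0}^{l}\frac{1}{i!}\,g^{(i)}(0)\:+\:\frac{1}{(l+1)!}\,g^{(l+1)}(\theta)\,.$$
Setting $\mathbf{c}:=\mathbf{a}+\theta(\mathbf{b}-\mathbf{a})\in\overline{\mathbf{a},\!\mathbf{b}}$ and substituting the identities for $g(0)$, $g(1)$, $g^{(i)}(0)$, and $g^{(l+1)}(\theta)$ recovers the stated multivariable expansion.

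The only real obstacle is the inductive chain-rule computation, which is a bookkeeping exercise once one takes care that at each stage the relevant partial derivative is differentiable in the sense of Fr\'echet (so that the product and chain rules apply cleanly). The single-variable Taylor theorem may be invoked as standard, and no further analytic input — in particular no convexity or compactness of $S$ beyond the one segment — is needed.
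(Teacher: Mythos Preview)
The paper does not prove this statement; it simply cites it as a standard result from Apostol's \emph{Mathematical Analysis} ([Ap: Theorem~12.14]) and uses it as background input for the estimates in Sec.~3.2. Your proposed argument --- restricting $f$ to the segment via $g(s)=f(\mathbf{a}+s(\mathbf{b}-\mathbf{a}))$, verifying $g^{(i)}(s)=f^{(i)}(\mathbf{a}+s(\mathbf{b}-\mathbf{a});\mathbf{b}-\mathbf{a})$ inductively by the chain rule, and then invoking the one-variable Taylor theorem with Lagrange remainder --- is exactly the standard textbook proof (and is in fact the proof Apostol gives), and it is correct as written.
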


\bigskip

\begin{flushleft}
{\bf Oriented-Lagrangian Grassmannian bundles,
     prolongation of Lagrangian immersions, and calibrations.}
\end{flushleft}
Let
 $(Y,J,\omega,\Omega)$ be a Calabi-Yau $m$-fold,
  denoted collectively by $Y$, and
 $\Gr^{L^+}(T_{\ast}Y)$ be the oriented-Lagrangian Grassmannian bundle
  over $Y$, whose fiber over $y\in Y$ is given by
  the Grassmannian manifold $\Gr^{L^+}(T_yY)$
  of oriented Lagrangian subspaces of $T_yY$.
By construction,
 a Lagrangian immersion $f:X\rightarrow Y$
  from an oriented $m$-manifold $X$ to $Y$ has a unique lifting
  $$
   \xymatrix{
      &&  \Gr^{L^+}(T_{\ast}Y)\ar[d]^-{\pi_Y} \\
    X\ar[rru]^-{Gr^{L^+}\!\!f}\ar[rr]_-f && Y  &\hspace{-4em} ,
   }
  $$
  defined by $(\Gr^{L^+}\!\!f)(x)=[f_{\ast}(T_xX)]\in \Gr^{L^+}(T_{f(x)}Y)$
  for $x\in X$, where $f_{\ast}(T_xX)$
  is equipped with an orientation from that of $T_xX$
  via the isomorphism $f_{\ast}: T_xX\rightarrow f_{\ast}(T_xX)$.

\begin{definition}
{\bf [prolongation of Lagrangian immersion].} {\rm
 $\Gr^{L^+}\!\!f:X\rightarrow \Gr^{L^+}(T_{\ast}Y)$ is called
 the {\it prolongation} of the Lagrangian map $f:X\rightarrow Y$
 to $\Gr^{L^+}(T_{\ast}Y)$.
}\end{definition}

The holomorphic $m$-form $\Omega$ on $Y$ defines a map
 $e^{\sqrt{-1}\alpha}:\Gr^{L^+}(T_{\ast}Y)
                      \rightarrow U(1)\subset {\Bbb C}^{\ast}$
 by $[L]\mapsto \Omega|_{L}/\vol_L$,
 where $\vol_L$ is the volume-form on $L$ induced by the metric on $Y$.
The imaginary part $\varepsilon:= \sin\alpha$ of $e^{\sqrt{-1}\alpha}$
 defines a smooth function on $\Gr^{L^+}(T_{\ast}Y)$.
This defines in turn
 a smooth section $d\varepsilon$ of $T^{\ast}(\Gr^{L^+}(T_{\ast}Y))$
  and
 a smooth function
  $|d\varepsilon|^2:=|(\Gr^{L^+}\!\!f)^{\ast}d\varepsilon|^2$ on $X$,
    using the pullback metric tensor on $X$ under $f$.

\bigskip

\begin{flushleft}
{\bf Local charts on $\Gr^{L^+}(T_{\ast}Y)$ and prolongations as 2-jets.}
\end{flushleft}
Note that any Lagrangian tangent subspace of $Y$ is tangent
 to some embedded Lagrangian submanifold of $Y$.
Local charts on $\Gr^{L^+}(T_{\ast}Y)$ can thus be provided by
 Lagrangian neighborhoods on $Y$ as follows.\footnote{Such
                          local charts on {\it Gr}$\,^{L^+}(T_{\ast}Y)$
                           are  more convenient for our purpose.
                          One can also consider local charts
                           on {\it Gr}$\,^{L^+}(T_{\ast}Y)$
                           induced by Darboux charts on $Y$.}
Let
 $Z\subset Y$ be an oriented embedded Lagrangian submanifold $Y$,
 $\tilde{Z}\subset\Gr^{L^+}(T_{\ast}Y)$ be its prolongation
  to $\Gr^{L^+}(T_{\ast}Y)$,  and
 $\Phi_Z:U_Z\rightarrow Y$ be a Lagrangian neighborhood of $Z\subset Y$,
  where $U_Z$ is a neighborhood of the zero-section of $T^{\ast}Y$.
Local coordinates $(u_1,\,\cdots\,,u_m)$ of a chart $U$ on $Z$,
 with the orientation specified by $du_1\wedge\,\cdots\,\wedge du_m$
 induce local coordinates
 $(u_1,\,\cdots\,,u_m, p_{u_1},\,\cdots\,, p_{u_m})
  =:\, (\mbox{\boldmath $u$},\mbox{\boldmath $p_u$})\,
       \in {\Bbb R}^m_{(1)}\times{\Bbb R}^m_{(2)}$
 on the associated chart on $U_Z$ with
 $$
  (u_1,\,\cdots\,,u_m, p_{u_1},\,\cdots\,, p_{u_m})\;
  \longleftrightarrow\;
  p_{u_1}du_1+\,\cdots\,+p_{u_m}du_m \in T^{\ast}Z\,.
 $$
In terms of this and by [McD-S: Lemma~2.28],
 a coordinate chart $\tilde{U}$ for a neighborhood of
 $\tilde{Z}\subset\Gr^{L^+}(T_{\ast}Y)$ is given by
 $$
  \{((\mbox{\boldmath $u$},\mbox{\boldmath $p_u$}),A)\,|\,
    (\mbox{\boldmath $u$},\mbox{\boldmath $p_u$})
     \in\,\mbox{chart on $U_Z$ associated to $U$}\,;\,
    A:\,\mbox{symmetric $m\times m$-matrix}\}\,.
 $$
Here, $((\mbox{\boldmath $u$},\mbox{\boldmath $p_u$}),A)$
 specifies the oriented Lagrangian tangent subspace at
 $(\mbox{\boldmath $u$},\mbox{\boldmath $p_u$})$ given by
 $$
  \Lambda_{((\mbox{\scriptsize\boldmath $u$},
             \mbox{\scriptsize\boldmath $p_u$}),A)}\;
  :=\; \{(\mbox{\boldmath $\xi$}\,, A\mbox{\boldmath $\xi$})\,:\,
          \mbox{\boldmath $\xi$}\in {\Bbb R}^m_{(1)}\}\;\subset\;
       {\Bbb R}^m_{(1)}\times{\Bbb R}^m_{(2)}\,,
 $$
 where
  we have identified the tangent space of a point on
   ${\Bbb R}^m_{(1)}\times{\Bbb R}^m_{(2)}$ canonically with
   ${\Bbb R}^m_{(1)}\times{\Bbb R}^m_{(2)}$ itself,
   using the linear structure,  and
  the orientation of
   $\Lambda_{((\mbox{\scriptsize\boldmath $u$},
               \mbox{\scriptsize\boldmath $p_u$}),A)}$
   is specified by the orientation on ${\Bbb R}^m_{(1)}$ via
    the restriction of the projection map
    ${\Bbb R}^m_{(1)}\times{\Bbb R}^m_{(2)}\rightarrow {\Bbb R}^m_{(1)}$
    to $\Lambda_{((\mbox{\scriptsize\boldmath $u$},
                   \mbox{\scriptsize\boldmath $p_u$}),A)}$.

Now, an oriented embedded Lagrangian submanifold $Z^{\prime}$ in $Y$
  that is $C^1$-close to $Z$
 can be expressed as the graph of a closed $1$-form on $Z$
  under $\Phi_Z$.
On a small enough chart $U$ on $Z$, $Z^{\prime}$ is thus given by
  $$
   \Gamma_{dh}\;=\;
    \left\{\left.
     \left(\mbox{\boldmath $u$},
           \frac{\partial h}{\partial u_1}(\mbox{\boldmath $u$}),\,
           \cdots\,,
           \frac{\partial h}{\partial u_m}(\mbox{\boldmath $u$})
      \right) \right|\, \mbox{\boldmath $u$} \in U
    \right\}
  $$
  for some $h\in C^{\infty}(U)$.
The prolongation $\tilde{Z}^{\prime}$ of $Z^{\prime}$
 to $\Gr^{L^+}(T_{\ast}Y)$ is thus locally given by
  $$
   \tilde{\Gamma}_{dh}\; =\;
    \left\{\left.
     \left(\mbox{\boldmath $u$},
           \frac{\partial h}{\partial u_1}(\mbox{\boldmath $u$}),\,
           \cdots\,,
           \frac{\partial h}{\partial u_m}(\mbox{\boldmath $u$}),
     \left(\frac{\partial^2 h}{\partial u_i\partial u_j}
                               (\mbox{\boldmath $u$})\right)_{j,i}
      \right) \right|\, \mbox{\boldmath $u$} \in U
    \right\}\,.
  $$
It follows that the prolongation
 $\Gr^{L^+}\!\!f:X\rightarrow \Gr^{L^+}(T_{\ast}Y)$
 of an oriented Lagrangian immersion $f:X\rightarrow Y$
 can be expressed locally as the prolongation of a $2$-jet.

%
%
%
%
%
%

\bigskip

\begin{flushleft}
{\bf Estimating $\Imaginary\Omega|_{N^t}$.}
\end{flushleft}

\begin{definition-notation}
{\bf [setup: reference map of prolongation].} {\rm
 Recall Notation~3.1.7:
 The submanifolds with boundary
   $P_j^t$, $Q_j^t$, $K^t\subset N^t=X$ and
   the decomposition
    $$
     f^t\;
      =\;  \left( \cup_{j=1}^{\tilde{n_0}}
                  (f^t|_{P_j^t}\cup f^t|_{Q_j^t} ) \right)
           \bigcup f^t|_{K^t}
    $$
   of Lagrangian immersions $f^t:X\rightarrow Y$.
 With respect to this decomposition,
 define
   $$
    \Theta^t\;  :=\;
     \left( \coprod_{j=1}^{\tilde{n_0}}
      \left(\Theta^t_{P_j^t}\coprod \Theta^t_{Q_j^t} \right) \right)
     \coprod \Theta^t_{K^t}\;:\;
     \left( \coprod_{j=1}^{\tilde{n_0}}
      \left(P_j^t\coprod Q_j^t \right) \right) \coprod K^t\;
      \longrightarrow\;
      \Gr^{L^+}(T_{\ast}Y)
   $$
  by
  $$
   \begin{array}{ccccclc}
    \Theta^t_{P_j^t} & :
     & P_j^t & \longrightarrow
     & \Gr^{L^+}((T_{\ast}Y)|_{\Gamma_i})
     & \subset\; \Gr^{L^+}(T_{\ast}Y) \\[1.2ex]
    && (x_1,x_2,x_3)  & \longmapsto
     & \pr^{Gr}_2([f^t_{\ast}(T_{(x_1,x_2,x_3)}P_j^t)])
     & \in\;\, \Gr^{L^+}(T_{(0,0,x_3)}Y)  & , \\[1.8ex]
   \end{array}
  $$
  $$
   \Theta^t_{Q_j^t}\; =\; \Gr^{L^+}\!\!(f^0|_{Q_j^t})\,,
    \hspace{2em}\mbox{and}\hspace{2em}
   \Theta^t_{K^t}\; =\; \Gr^{L^+}\!\!(f^0|_{K^t})\,. \hspace{8.3em}
  $$
 Here,
  \begin{itemize}
   \item[$\cdot$]
    $f(\tilde{\Gamma}_j)=\Gamma_i$;

   \item[$\cdot$]
    the symplectic coordinates
     $(u_1,u_2,u_3,p_{u_1},p_{u_2},p_{u_3})=(u_1,u_2,u_3,v_1,v_2,v_3)$
     on $\nu^j_{Z_0}(\Gamma_i)$ induces a trivialization
     $\Gr^{L^+}(T_{\ast}\nu^j_{Z_0}(\Gamma_i)) \simeq
      T_{\ast}\nu^j_{Z_0}(\Gamma_i) \times_{\Gamma_i}
       \Gr^{L^+}(T_{\ast}Y|_{\Gamma_i})$
     via the symplectic linear structure from the coordinates  and
    $\pr^{Gr}_2$ is the projection map to the second factor;

   \item[$\cdot$]
    recall that $f^0:= f$.
  \end{itemize}
 We'll call $\Theta^t$
  a {\it (piecewise-smooth) reference map} for the prolongation
  $\Gr^{L^+}\!\!f^t:X\rightarrow \Gr^{L^+}(T_{\ast}Y)$ of $f^t$.
}\end{definition-notation}

\begin{proposition}
{\bf [basic estimate].}
 In the situation and notations in Definition~3.2.3,
  making $\delta>0$ smaller if necessary  and
  assuming that $b_1^t=t^{c_1}$, $b_2^t=t^{c_2}$ for some $0<c_2<c_1$,
 then there exists a constant $C>0$
  such that for all $t\in (0,\delta)$, one has
  $$
   \begin{array}{rcl}
    |\varepsilon^t|   & \le  & \left\{
     \begin{array}{lccl}
      C\,t^{c_1}\,+\, C\,t^{(1-\frac{1}{m_j})+\frac{c_1}{m_j}}
       && \mbox{on}\hspace{2ex} P_j^t\,,  \\[1.2ex]
      C\,t^{ 1-\frac{1}{m_j}-2c_2}\,
       +\, C\,t^{(1-\frac{1}{m_j})(1-c_1)}
       && \mbox{on}\hspace{2ex} Q_j^t\,,  \\[1.2ex]
      0 && \mbox{on}\hspace{2ex} K^t\,;
     \end{array}              \right.        \\[-1ex]
     & &                                     \\
    |d\varepsilon^t|  & \le  & \left\{
     \begin{array}{lccl}
      C\,+\, C\,t^{(m_j-1)(c_1-1)/m_j}\,.
       && \mbox{on}\hspace{2ex} P_j^t\,,  \\[1.2ex]
      C\,t^{1-\frac{1}{m_j}-3c_2}\,
       +\, C\,t^{(1-\frac{1}{m_j})(1-c_1)-c_2}\,
       +\, C\,t^{(1-\frac{1}{m_j})-c_1(2-\frac{1}{m_j})}
       && \mbox{on}\hspace{2ex} Q_j^t\,,  \\[1.2ex]
      0 && \mbox{on}\hspace{2ex} K^t
     \end{array}         \right.
   \end{array}
  $$
  for all $j=1,\,\cdots\,,\tilde{n}_0$.
 Here $|\cdot|$ is computed using the metric $g^t$ on $N^t$.
\end{proposition}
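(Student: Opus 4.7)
The proof will split into the three regions $K^t$, $P_j^t$, $Q_j^t$ of Notation~3.1.7. On $K^t$, $f^t$ equals $f^0=f$, which maps into the special Lagrangian $Z_0$, so both $\varepsilon^t\equiv 0$ and $d\varepsilon^t\equiv 0$ identically; this yields the bounds on $K^t$ without effort. For the two remaining regions, the plan is to view $\varepsilon=\sin\alpha$ as a fixed smooth function on $\Gr^{L^+}(T_{\ast}Y)$, apply Taylor's formula (Theorem~3.2.1) on that finite-dimensional manifold, and compare the prolongation $\Gr^{L^+}\!\!f^t$ with the piecewise-smooth reference map $\Theta^t$ of Definition-Notation~3.2.3, along which $\varepsilon$ vanishes identically. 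Uniform bounds on the $C^0$-norms of $d\varepsilon$ and $\Hess\,\varepsilon$ on a compact neighborhood of $\overline{\Gr^{L^+}(T_{\ast}Y)|_{Z_0}}$ are provided by the compactness of $Z_0$ and smoothness of $\varepsilon$, and this is the finite-dimensional input that replaces direct calculation with $J$ and $\Omega$.

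On $P_j^t$ the map $f^t=\Upsilon^j\circ\psi^{j,a_jt^{m_j-1}}$ factors through the smooth embedded special Lagrangian $L^{\prime,a_jt^{m_j-1}}_{R_0}$ of the flat model $(Y^{\prime,j},J^{\prime},\omega^{\prime},\Omega^{\prime})$. Since $\Upsilon^{j\ast}(J,\Omega)|_{\Gamma_i}=(J^{\prime},\Omega^{\prime})|_{\Gamma_i}$, the oriented Lagrangian subspace $f^t_{\ast}T_xP_j^t$, once parallel-transported along the symplectic coordinates to $(0,0,x_3)\in\Gamma_i$, is special Lagrangian for $(J,\Omega)$ as well. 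Hence $\varepsilon\circ\Theta^t_{P_j^t}\equiv 0$, and Taylor's formula of order $0$ gives
$$
|\varepsilon^t(x)|\;\le\;\|d\varepsilon\|_{C^0}\cdot d_{\Gr^{L^+}(T_{\ast}Y)}\bigl(\Gr^{L^+}\!\!f^t(x),\,\Theta^t_{P_j^t}(x)\bigr).
$$
Because $\Theta^t_{P_j^t}$ is the trivialization-parallel translate of the same Lagrangian to the base point on $\Gamma_i$, the right-hand side reduces to the distance $d_Y(f^t(x),\Gamma_i)$, which by the explicit form of $\psi^{j,a_jt^{m_j-1}}$ is bounded by $|\hat z_1|+|\hat z_2|\le t^{c_1}+a_j^{1/m_j}\,t^{(m_j-1)/m_j+c_1/m_j}$. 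This produces the two claimed terms on $P_j^t$. For $|d\varepsilon^t|_{g^t}$ one applies Taylor's formula of order $1$ with second-order remainder and the chain rule $d\varepsilon^t=(d\varepsilon)\circ d(\Gr^{L^+}\!\!f^t)$, bounding $|d(\Gr^{L^+}\!\!f^t)|_{g^t}$ from the explicit expression in Sec.~2.2 and the fact that $g^t=(m_j^2r^{2(m_j-1)}+a_j^{2/m_j}t^{2(m_j-1)/m_j})(dx_1^2+dx_2^2)+dx_3^2$ (plus $O(|\hat z|)$ corrections); the $C+Ct^{(m_j-1)(c_1-1)/m_j}$ bound arises because the $g^t$-dual frame has its length controlled by $(m_j^2r^{2(m_j-1)}+a_j^{2/m_j}t^{2(m_j-1)/m_j})^{-1/2}$, worst at $r=t^{c_1/m_j}$.

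On $Q_j^t$ the reference is $\Theta^t_{Q_j^t}=\Gr^{L^+}\!\!f|_{Q_j^t}$, and $\varepsilon\circ\Gr^{L^+}\!\!f\equiv 0$ because $f$ lands in $Z_0$. The base-point distance $d_Y(f^t(x),f(x))=|\hat z_2|$ is read off from Remark~3.1.6 as $a^{1/m_j}t^{(m_j-1)/m_j}\bigl|\tfrac{m_j}{m_j+1}\dot\chi^t(r_1)+\chi^t(r_1)r_1^{1/m_j}\bigr|$; invoking Lemma~3.1.3 and $r_1\in[t^{c_1},t^{c_2}]$, the $\dot\chi^t$-term dominates near $r_1=b_2^t$ and the $r_1^{1/m_j}$-term near $r_1=b_1^t$. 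The fiber-distance contribution is governed by $\Hess\,h^{j,t}$, which involves $\ddot\chi^t$ and thus carries additional factors of $t^{-c_2}$. Combining these with the further $t^{-c_2}$-type factors introduced when converting distances measured in $Y$ into $g^t$-norms on $N^t=X$ (the inverse $g^t$ degenerates as one crosses $Q_j^t$ between the scaled and unscaled regimes) yields the exponents $1-1/m_j-2c_2$ and $1-1/m_j-3c_2$ of the claim; the exponents $(1-1/m_j)(1-c_1)$ and $(1-1/m_j)-c_1(2-1/m_j)$ are the matching contributions from the $\chi^t(r_1)r_1^{1/m_j}$-term evaluated at the inner boundary $r_1=b_1^t=t^{c_1}$ and its first derivative. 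The main obstacle is the second paragraph above broken into the $Q_j^t$ region: the cutoff derivatives $|\dot\chi^t|\lesssim t^{-c_2}$, $|\ddot\chi^t|\lesssim t^{-2c_2}$ must be combined carefully with the partial-scaling factor $t^{(m_j-1)/m_j}$ of $h^j$ and with the degenerating $g^t$, and the exponents $c_1>c_2>0$ are picked precisely so that the Taylor remainders on $Q_j^t$ stay in the regime where $\|d\varepsilon\|_{C^0}$ and $\|\Hess\,\varepsilon\|_{C^0}$ are uniformly controlled (this is where shrinking $\delta$ is needed).
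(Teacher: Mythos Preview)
Your overall strategy---Taylor expansion of $\varepsilon$ on $\Gr^{L^+}(T_{\ast}Y)$ against the reference map $\Theta^t$, split over $K^t$, $P_j^t$, $Q_j^t$---is exactly the paper's approach, and your treatment of $K^t$ and $P_j^t$ is essentially correct.

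On $Q_j^t$, however, your mechanism for the key exponents is wrong. You attribute the extra $t^{-c_2}$ factors (turning $1-\frac{1}{m_j}-c_2$ into $1-\frac{1}{m_j}-2c_2$ for $|\varepsilon^t|$, and then into $1-\frac{1}{m_j}-3c_2$ for $|d\varepsilon^t|$) to ``converting distances measured in $Y$ into $g^t$-norms on $N^t$'', saying that $g^t$ degenerates across $Q_j^t$. It does not: on $Q_j^t$ one has $f^t\approx f$, so $g^t$ is within a bounded factor of the flat $dr_1^2+r_1^2\,d\theta_1^2+du_3^2$, and in particular $|dr_1|_{g^t}\approx 1$. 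No powers of $t$ come from the metric there. The actual source is that the prolongation records the $2$-jet: the fiber coordinate in $\Gr^{L^+}(T_{\ast}Y)$ is $dr_2/dr_1$, which already contains $\ddot\chi^t\sim(b_2^t)^{-2}=t^{-2c_2}$, and differentiating the prolongation once more brings in $d^2r_2/dr_1^2$, which contains $\dddot\chi^{\,t}\sim(b_2^t)^{-3}=t^{-3c_2}$. This is exactly why Lemma~3.1.3 is set up to control the first \emph{three} derivatives of $\chi^t$. Concretely, the paper's bookkeeping on $Q_j^t$ is
\[
\varepsilon^t\;=\;O(r_2)\,+\,O(dr_2/dr_1),\qquad
|d\varepsilon^t|\;=\;O(dr_2/dr_1)\,+\,O(d^2r_2/dr_1^2),
\]
and the listed exponents drop out from the explicit derivatives of $r_2(r_1)=a_j^{1/m_j}t^{(m_j-1)/m_j}\bigl[\tfrac{m_j}{m_j+1}\dot\chi^t(r_1)+\chi^t(r_1)r_1^{1/m_j}\bigr]$, with the $\dddot\chi^{\,t}$, $\dot\chi^t\cdot r_1^{(1-m_j)/m_j}$, and $\chi^t\cdot r_1^{(1-2m_j)/m_j}$ terms producing respectively the $1-\frac{1}{m_j}-3c_2$, $(1-\frac{1}{m_j})(1-c_1)-c_2$, and $(1-\frac{1}{m_j})-c_1(2-\frac{1}{m_j})$ contributions. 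Once you replace the metric-degeneration story with this derivative count, your argument lines up with the paper's.
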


\begin{proof}
 (See Item (a.1) and Item (b.1) in the proof of Proposition~3.2.5,
      where all the necessary expressions are collected.)
 Note that
  $(\Theta^t)^{\ast}\varepsilon^t \equiv 0
   \equiv (\Theta^t)^{\ast}(d\varepsilon)$
  on
  $(\coprod_{j=1}^{\tilde{n_0}}
    (\Theta^t_{P_j^t}\coprod \Theta^t_{Q_j^t})) \coprod \Theta^t_{K^t}$.
 All the estimates can be made
  with $P_j^t\cup Q_j^t\subset (X,g^t)$
   approximated by the flat geometry on $L^{\prime,j}$.
 Recall Remark~3.1.6.
 The estimate for $\varepsilon^t$
  follows thus from pointwise Taylor's formula over $X$
   for $\varepsilon$ on $\Gr^{L^+}(T_{\ast}Y)$
   with the corresponding point in $\Image\Theta^t$
    as the reference point
 and the following estimates:
 %
 %
 %
 %
 %
 For $P_j^t$,
 consider $t\cdot L^{\prime,j}$ with $r_1\le b_1^t$.
  %
 Then,
  $$
   0\; \le\; r_1\; \le b_1^t\; =\; t^{c_1}\,,
  $$
  $$
   0\; \le\; r_2\;
   =\;   a_j^{1/m_j}\,t^{(m_j-1)/m_j}\,r_1^{1/m_j}\;
   \le\; O(t^{(m_j-1)/m_j})\,(b_1^t)^{1/m_j}\;
   =\;   O(\,t^{(1-\frac{1}{m_j})+\frac{c_1}{m_j}}\,)\,.
  $$
 For $Q_j^t$,
 consider $t\cdot L^{\prime,j}$ with $b_1^t\le r_1 \le b_2^t$:
  %
 Then,
  $$
   r_2\;
   =\;   a_j^{1/m_j}\,t^{(m_j-1)/m_j}\,r_1^{1/m_j}\;
   \ge\; O(t^{(m_j-1)/m_j})\,(b_1^t)^{1/m_j}\;
   \ge\; O(\,t^{(1-\frac{1}{m_j})+\frac{c_1}{m_j}}\,)\,;
 $$
 $$
   r_2\;
   =\;   a_j^{1/m_j}\,t^{(m_j-1)/m_j}\,r_1^{1/m_j}\;
   \le\; O(t^{(m_j-1)/m_j})\,(b_2^t)^{1/m_j}\;
   =\;   O(\,t^{(1-\frac{1}{m_j})+\frac{c_2}{m_j}}\,)\,;
 $$

 \begin{eqnarray*}
  \lefteqn{\left|\,
   a_j^{1/m_j}\,t^{(m_j-1)/m_j}\,
     \left[\frac{m_j}{m_j+1}\,\dot{\chi}^t(r_1)\,
          +\,\chi^t(r_1)\,r_1^{1/m_j}\right] \right|}     \\[1.2ex]
   && \le\;
    a_j^{1/m_j}\,t^{(m_j-1)/m_j}\,
    \left(\frac{m_j}{m_j+1}\cdot \frac{C_0}{b_2^t}\,
          +\, r_1^{1/m_j} \right)                         \\[1.2ex]
   && \le\;
    O(t^{(m_j-1)/m_j})\,(b_2^t)^{-1}\,
    +\, O(t^{(m_j-1)/m_j})\,(b_2^t)^{1/m_j}               \\[1.2ex]
   && =\;
    O(\,t^{ 1-\frac{1}{m_j}-c_2}\,)\,
    +\, O(\,t^{1-\frac{1}{m_j}+\frac{c_2}{m_j} }\,)       \\[1.2ex]
   && =\;
    O(\,t^{ 1-\frac{1}{m_j}-c_2}\,)\,;
 \end{eqnarray*}

 \begin{eqnarray*}
  \lefteqn{\left|
   \frac{d}{dr_1}
     \left(
      a_j^{1/m_j}\,t^{(m_j-1)/m_j}\,
        \left[\frac{m_j}{m_j+1}\,\dot{\chi}^t(r_1)\,
             +\,\chi^t(r_1)\,r_1^{1/m_j}\right]
               \right) \right|}                                \\[1.2ex]
   && =\;
    a_j^{1/m_j}\,t^{(m_j-1)/m_j}\,
    \left|
     \frac{m_j}{m_j+1}\,\ddot{\chi}^t(r_1)\,
     +\, \dot{\chi}^t(r_1)\,r_1^{1/m_j}\,
     +\, \frac{1}{m_j}\,\chi^t(r_1)\,r_1^{(1-m_j)/m_j} \right| \\[1.2ex]
   && \le\;
    a_j^{1/m_j}\,t^{(m_j-1)/m_j}\,
    \left(\frac{m_j}{m_j+1}\cdot\frac{C_0}{(b_2^t)^2}\,
          +\, \frac{C_0}{b_2^t}\,r_1^{1/m_j}\,
          +\, \frac{1}{m_j}\,r_1^{(1-m_j)/m_j} \right)         \\[1.2ex]
   && \le\;
    O(t^{(m_j-1)/m_j})\,(b_2^t)^{-2}\,
    +\, O(t^{(m_j-1)/m_j})\,(b_2^t)^{(1-m_j)/m_j}\,
    +\, O(t^{(m_j-1)/m_j})\,(b_1^t)^{(1-m_j)/m_j}              \\[1.2ex]
   && =\;
    O(\,t^{ 1-\frac{1}{m_j}-2c_2}\,)\,
    +\, O(\,t^{(1-\frac{1}{m_j})(1-c_2)}\,)\,
    +\, O(\,t^{(1-\frac{1}{m_j})(1-c_1)}\,)                    \\[1.2ex]
   && =\;
    O(\,t^{ 1-\frac{1}{m_j}-2c_2}\,)\,
    +\, O(\,t^{(1-\frac{1}{m_j})(1-c_1)}\,)\,.
 \end{eqnarray*}
 Here, we have used the fact that
 $r_1^{1/m_j}$ (resp.\ $r_1^{(1-m_j)/m_j}$)
  is an increasing (resp.\ decreasing) function over $r_1>0$ for all $j$
   and
 $(1-\frac{1}{m_j})(1-c_2)>(1-\frac{1}{m_j})(1-c_1)$.
 Under the assumption, all the exponents in the $t$-orders are positive.

 Similarly for the estimates for $|d\varepsilon^t|$
  with, in addition, the following estimates:
 For $P_j^t$,
 $$
  dr_2/dr_2\;=\;1\,,
 $$
 $$
  \left|
   \frac{d}{dr_2}
     \left(a_j^{-1}\,t^{1-m_j}\,r_2^{m_j}\right) \right|\;
   =\;    m_j\,a_j^{-1}\,t^{1-m_j}\,r_2^{m_j-1}\;
   \le\;  O(t^{(m_j-1)(c_1-1)/m_j})\,.
 $$
 For $Q_j^t$,
 \begin{eqnarray*}
  \lefteqn{\left|
   \frac{d^2}{dr_1^2}
     \left(
      a_j^{1/m_j}\,t^{(m_j-1)/m_j}\,
            \left[\frac{m_j}{m_j+1}\,\dot{\chi}^t(r_1)\,
                  +\,\chi^t(r_1)\,r_1^{1/m_j}\right]
               \right) \right|}                                \\[1.2ex]
   && =\;
    a_j^{1/m_j}\,t^{(m_j-1)/m_j}\,
    \left|\rule{0em}{1.6em}\right.
     \frac{m_j}{m_j+1}\,\dddot{\chi}^{\,t}(r_1)\,
     +\, \ddot{\chi}^t(r_1)\, r_1^{1/m_j}\,                    \\[1.2ex]
   && \hspace{10em}
    +\, \frac{2}{m_j}\,\dot{\chi}^t(r_1)\,r_1^{(1-m_j)/m_j}
    +\, \frac{1-m_j}{m_j^2}\,\chi^t(r_1)\,r_1^{(1-2m_j)/m_j}
            \left.\rule{0em}{1.6em}\right|                     \\[1.2ex]
   && \le\;
    a_j^{1/m_j}\,t^{(m_j-1)/m_j}\,
    \left(\rule{0em}{1.6em}\right.
      \frac{m_j}{m_j+1}\,\frac{C_0}{(b_2^t)^3}\,
         +\, \frac{C_0}{(b_2^t)^2}\,r_1^{1/m_j}                \\[1.2ex]
   && \hspace{10em}
         +\, \frac{2\,C_0}{m_j\,b_2^t}\,r_1^{(1-m_j)/m_j}\,
         +\, \frac{1-m_j}{m_j^2}\,r_1^{(1-2m_j)/m_j}
    \left.\rule{0em}{1.6em}\right)                             \\[1.2ex]
   && \le\;
    O(t^{(m_j-1)/m_j})\,(b_2^t)^{-3}\,
    +\,O(t^{(m_j-1)/m_j})\,(b_2^t)^{(1-2m_j)/m_j}\,            \\[1.2ex]
   && \hspace{6em}
    +\, O(t^{(m_j-1)/m_j})\,(b_2^t)^{-1}\,(b_1^t)^{(1-m_j)/m_j}
    +\, O(t^{(m_j-1)/m_j})\,(b_1^t)^{(1-2m_j)/m_j}             \\[1.2ex]
   && =\;
    O(\,t^{1-\frac{1}{m_j}-3c_2}\,)\,
    +\, O(\,t^{(1-\frac{1}{m_j})-c_2(2-\frac{1}{m_j})}\,)\,
    +\, O(\,t^{(1-\frac{1}{m_j})(1-c_1)-c_2}\,)\,
    +\, O(\,t^{(1-\frac{1}{m_j})-c_1(2-\frac{1}{m_j})}\,)     \\[1.2ex]
   && =\;
    O(\,t^{1-\frac{1}{m_j}-3c_2}\,)\,
    +\, O(\,t^{(1-\frac{1}{m_j})(1-c_1)-c_2}\,)\,
    +\, O(\,t^{(1-\frac{1}{m_j})-c_1(2-\frac{1}{m_j})}\,)\,.
  \end{eqnarray*}
 Here, we have used in addition the fact that
 $r_1^{(1-2m_j)/m_j}$
  is a decreasing function over $r_1>0$ for all $j$, and
 $(1-\frac{1}{m_j})-c_2(2-\frac{1}{m_j})
  > (1-\frac{1}{m_j})-c_1(2-\frac{1}{m_j})$.
 Under the assumption, all the exponents in the $t$-orders are positive.

\end{proof}

\begin{proposition}
{\bf [Sobolev norm estimate].}
 Continuing the situation in Proposition~3.2.4,
 with an additional assumption that $m_j\not\in\{2,6,11\}$,
 then for all $t\in (0,\delta)$, with $\delta$ small enough,
  the norms
  $\|\varepsilon^t\|_{L^{6/5}}$, $\|\varepsilon^t\|_{C^0}$,
  $\|d\varepsilon^t\|_{L^6}$, and $\|\varepsilon^t\|_{L^1}$
   are bounded above by $t$-powers with exponents a linear function
   in $c_1$ and $c_2$ with coefficients fractional functions in $m_j$,
   $j=1,\,\,\ldots\,,\,\tilde{n}_0$.

 Here the norms $\|\cdot\|_{\bullet}$ are computed using $g^t$ on $N^t$.
\end{proposition}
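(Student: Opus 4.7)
The plan is to combine the pointwise $C^0$ bounds of Proposition 3.2.4 with an explicit computation of the volume form $dV_{g^t}$ on each of the cylindrical regions $P_j^t, Q_j^t$ that support $\varepsilon^t$. Since $\varepsilon^t$ vanishes identically on $K^t$ (where $f^t=f^0$ is a special Lagrangian immersion), all four norms receive contributions only from the $P_j^t$'s and $Q_j^t$'s, each mapped into the flat model chart $\Upsilon^j(Y^{\prime,j})$.

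Using the expression of Remark 3.1.6 for $f^t(N^t)\cap\Upsilon^j(Y^{\prime,j})$, together with the polar coordinates $\rho=|x_1+\sqrt{-1}x_2|,\,\phi,\,x_3$ on $\nu_X(\tilde\Gamma_j)$, I would first compute that $g^t|_{P_j^t\cup Q_j^t}$ is a product of $dx_3^2$ with a conformally flat metric on the $(\rho,\phi)$-disk whose conformal factor is an explicit polynomial-plus-lower-order expression in $\rho$, $t^{(m_j-1)/m_j}$, and $\chi^t(\rho^{m_j}),\dot\chi^t(\rho^{m_j})$. The volume element then has the schematic form
$$
 dV_{g^t}\;\asymp\;
   \bigl(m_j^2\,\rho^{2(m_j-1)}\,+\,a_j^{2/m_j}\,t^{2(m_j-1)/m_j}
          \,+\,\mbox{\it l.o.t.}\bigr)\,\rho\,d\rho\wedge d\phi\wedge dx_3\,.
$$

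With this volume form and the $C^0$ bounds of Proposition 3.2.4 in hand, each of the four norms is estimated by integrating the relevant power of the pointwise bound against $dV_{g^t}$ over $P_j^t$ (where $\rho\in[0,t^{c_1/m_j}]$) and over $Q_j^t$ (where $\rho\in[t^{c_1/m_j},t^{c_2/m_j}]$). After using the derivative bounds of Lemma 3.1.3 to control the $\chi^t$-dependent corrections, the problem reduces to a finite list of one-dimensional monomial integrals
$$
 \int_{0}^{t^{c_1/m_j}}\rho^{\alpha}\,d\rho
 \hspace{1em}\mbox{and}\hspace{1em}
 \int_{t^{c_1/m_j}}^{t^{c_2/m_j}}\rho^{\alpha}\,d\rho\,,
$$
with $\alpha$ a specific rational function of $m_j$ depending on the norm under consideration. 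Whenever $\alpha+1\ne 0$, each such integral evaluates to a $t$-power with exponent an affine function of $(c_1,c_2)$ and coefficient rational in $m_j$; multiplying by the prefactor $t$-powers from Proposition 3.2.4 and the overall length $l_i$ of $\Gamma_i$ produces the advertised bound.

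The main obstacle — and the origin of the exclusion $m_j\notin\{2,6,11\}$ — is that in the critical case $\alpha(m_j)+1=0$ the model integral replaces the clean $t$-power by a logarithmic factor $\log t$, spoiling the desired affine-linear exponent. I would then enumerate which exponent $\alpha(m_j)$ enters for each of $\|\varepsilon^t\|_{L^{6/5}}$, $\|d\varepsilon^t\|_{L^6}$ and $\|\varepsilon^t\|_{L^1}$ (the $C^0$ norm is immediate from Proposition 3.2.4 and requires no exclusion), and verify that the three sporadic integers $2,\,6,\,11$ are exactly those at which one of these $\alpha(m_j)+1$ vanishes. Outside this exceptional set, assembling the per-region bounds over $j=1,\,\cdots,\tilde n_0$ gives the claimed estimate for all four norms.
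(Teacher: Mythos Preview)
Your overall strategy --- split into $P_j^t$, $Q_j^t$, $K^t$, compute the volume form from the explicit parametrization in Remark~3.1.6, and reduce to one-variable radial integrals --- matches the paper's approach.  But there is a genuine gap in the execution.

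You propose to integrate the \emph{constant} $C^0$ bounds of Proposition~3.2.4 against $dV_{g^t}$.  With that choice the only $\rho$-dependence in the integrand comes from the volume factor itself, so the exponents $\alpha$ you list lie in $\{1,\,2m_j-1,\,\ldots\}$ and are never $-1$.  No logarithms can appear, and your explanation of the exclusion $m_j\notin\{2,6,11\}$ collapses: under your own scheme the exclusion is unmotivated.  (What survives is a much cruder bound $\|\varepsilon^t\|_{L^p}\le \|\varepsilon^t\|_{C^0}\cdot\mbox{\it Vol}(P_j^t\cup Q_j^t)^{1/p}$, which does give \emph{some} $t$-power with affine exponent, but not the ones the paper records and uses.)

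What the paper actually integrates are the sharper, \emph{position-dependent} pointwise estimates developed inside the proof of Proposition~3.2.4 (Items~(a.1) and~(b.1) here).  On $Q_j^t$, in the target variable $r_1=|\hat z_1|$, these read
$$
 |\varepsilon^t|\;\le\;O(t^{1-1/m_j})\bigl[O(t^{-2c_2})+O(r_1^{\,1/m_j-1})\bigr],
 \qquad
 |d\varepsilon^t|\;\le\;O(t^{1-1/m_j})\bigl[O(t^{-3c_2})+O(r_1^{\,1/m_j-2})\bigr],
$$
and on $P_j^t$ the analogous expressions in $r_2$.  Raising to the $6/5$-th, $1$-st, or $6$-th power and multiplying by the explicit volume density produces several competing monomials in $r_1$; the paper then partitions the $r_1$-interval $[t^{c_1},t^{c_2}]$ at the crossover radii (e.g.\ $r_1=O(t^{2c_2m_j/(m_j-1)})$, $r_1=O(t^{1-1/m_j-c_2})$, $r_1=O(t)$), which in turn decomposes the $(c_1,c_2)$-quadrant into $13$ (for $L^{6/5}$, $L^1$) or $26$ (for $L^6$ of $d\varepsilon^t$) regions.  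The logarithm appears precisely when one of these monomial exponents equals $-1$: this happens for $m_j=2$ in $\|\varepsilon^t\|_{L^1}$ and for $m_j=6,\,11$ in $\|\varepsilon^t\|_{L^{6/5}}$.  To repair your argument, replace ``the $C^0$ bounds of Proposition~3.2.4'' by these $r$-dependent pointwise estimates and carry out the crossover bookkeeping.
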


\noindent
Exact expressions for these exponents are given in the proof.

\begin{proof}
 The calculation is similar to that in the proof of Proposition~3.2.4,
  with the same reference map $\Theta^t$ for Taylor expansion and
  an additional ingredient from the $t$-dependent volume-forms
  on $X$ from the pull-back metric $g^t$.
 As $f^t$, $t\in (0,\delta)$, are immersions,
  we will perform the computation using data and coordinates on $Y$.

 \bigskip

 \noindent
 $(a)$ {\it On $P_j^t$.}\hspace{1em}

 \bigskip

 \noindent
 $(a.1)$ {\it Basic data for estimates on $P_j^t$.}\hspace{1em}
 Consider
  $$
   f^t(P_j^t)\; =\;
    \left\{
      (r_1e^{\sqrt{-1}\theta_1},\, u_3,\,
       r_2e^{\sqrt{-1}\theta_2},\, 0)\,
     \left|
      \begin{array}{l}
       \cdot\; r_1\;
        =\; a_j^{-1}\,t^{1-m_j}\,r_2^{m_j}\,,   \\[1.2ex]
       \hspace{2em} 0\, \le\, r_1\, \le\, b_1^t\, =\, t^{c_1}\,; \\[1.2ex]
       \cdot\;\theta_1\;=\; m_j\,\theta_2\;\;\;\;
                              (\,\mbox{\rm mod}\; 2\pi\,)
      \end{array}\right.\hspace{-1ex}\right\}.\
  $$
 The approximate metric is given by
  $$
   ds^2\; =\;
    \left(1\,+\,m_j^2\,a_j^{-2}\,t^{2(1-m_j)}\,r_2^{2(m_j-1)}
     \right)\,dr_2^2\,
    +\, r_2^2\,
        \left(1\,+\,m_j^2\,a_j^{-2}\,t^{2(1-m_j)}\,r_2^{2(m_j-1)}
         \right)\,d\theta_2^2\, +\, du_3^2\,.
  $$
  which gives the approximate volume-form:
   $$
    \vol^t\; =\;
     r_2\,
     \left(1\,+\,m_j^2\,a_j^{-2}\,t^{2(1-m_j)}\,r_2^{2(m_j-1)}
      \right)\,dr_2\wedge d\theta_2\wedge du_3\,.
   $$
 The imaginary part $\varepsilon^t$ of the ratio calibration/volume-form
  is approximated by
   $$
    \varepsilon^t\; =\; O(r_1)\,+\,O(r_2)\;
     =\; O(\,t^{1-m_j}\,r_2^{m_j}\,)\,+\, O(r_2)\,.
   $$  and
 $|d\varepsilon^t|$ is approximated by
   $$
    |d\varepsilon^t|\;=\; O(t^{1-m_j}r_2^{m_j-1})\,+\, O(1)\,.
   $$

 \bigskip

 \noindent
 $(a.2)$
 {\it Estimating $\|\varepsilon^t\|_{C^0}$ on $P_j^t$.}\hspace{1em}
 It follows from Part (a.1) (cf.\ Proposition~3.2.4) that
 $$
  \|\varepsilon^t\|_{C^0}\; =\;
  O(\,t^{c_1}\,)\,+\, O(\,t^{(1-\frac{1}{m_j})+\frac{c_1}{m_j}}\,)\,.
 $$

 \bigskip

 \noindent $(a.3)$
 {\it Estimating $\|\varepsilon^t\|_{L^{6/5}}$ on $P_j^t$.}
 \begin{eqnarray*}
  \lefteqn{\|\varepsilon^t\|_{L^{6/5}}\; =\;
   \left(
    2\pi l_j\,
    \int_{r_2=0}^{a^{1/m_j}t^{(m_j-1)/m_j}(b_1^t)^{1/m_j}}
     \left|\,
      O(\,t^{1-m_j}\,r_2^{m_j}\,)\,+\, O(r_2)
     \right|^{6/5} \right.}                          \\
   && \hspace{12em} \left.\rule{0em}{1.6em} \cdot
     r_2\,
     \left(1\,+\,m_j^2\,a_j^{-2}\,t^{2(1-m_j)}\,r_2^{2(m_j-1)}
      \right)\,dr_2\,
    \right)^{5/6}                                    \\[1.2ex]
   && =\;
   O(1)\,
    \left(\rule{0em}{1.6em}\right.
    \int_{r_2=0}^{O(\,t^{1+\frac{c_1-1}{m_j}}\,)}
     \left|\,
      O(\,t^{1-m_j}\,r_2^{m_j}\,)\,+\, O(r_2)
     \right|^{6/5}                            \\
   && \hspace{12em} \left.\rule{0em}{1.6em} \cdot
     r_2\,
     \left(1\,+\,O(\,t^{2(1-m_j)}\,r_2^{2(m_j-1)}\,)
      \right)\,dr_2\,
    \right)^{5/6}\,.
 \end{eqnarray*}
 Note that both $O(\,t^{1-m_j}\,r_2^{m_j}\,) = O(r_2)$ and
  $1\,=\,O(\,t^{2(1-m_j)}\,r_2^{2(m_j-1)}\,)$ occur at $r_2=O(t)$.
 Thus:

 If $0<c_1\le 1$,
 then for $\delta>0$ small enough, and all $t\in (0,\delta)$,
  $$
   0<O(t)\le O(\,t^{1+\frac{c_1-1}{m_j}}\,)
  $$
  \begin{eqnarray*}
   \lefteqn{\|\varepsilon^t\|_{L^{6/5}}\; =\;
    O(1)\,
    \left(\rule{0em}{1.6em}\right.
     \left(\rule{0em}{1.2em}\right.
      \int_{r_2=0}^{O(t)}\,
      +\,\int_{r_2=O(t)}^{O(\,t^{1+\frac{c_1-1}{m_j}}\,)}
     \left.\rule{0em}{1.2em}\right)
      \left|\,
       O(\,t^{1-m_j}\,r_2^{m_j}\,)\,+\, O(r_2)
      \right|^{6/5}                                 } \\
    && \hspace{20em} \left.\rule{0em}{1.6em} \cdot
      r_2\,
      \left(1\,+\,O(\,t^{2(1-m_j)}\,r_2^{2(m_j-1)}\,)
       \right)\,dr_2\,
     \right)^{5/6}            \\[1.2ex]
    && =\;
     O(1)\, \left(
      \int_{r_2=0}^{O(t)}\, O(r_2)^{6/5}
         \cdot r_2\cdot\,O(1)\,dr_2  \right. \\
    && \hspace{8em}
     +\,
     \int_{r_2=O(t)}^{O(\,t^{1+\frac{c_1-1}{m_j}}\,)}
      O(\,t^{1-m_j}\,r_2^{m_j}\,)^{6/5}\cdot r_2 \cdot
        O(\,t^{2(1-m_j)}\,r_2^{2(m_j-1)}\,)\, dr_2\,
          \left.\rule{0em}{1.6em}\right)^{5/6}    \\[1.2ex]
    && =\; \left( O(t^{16/5})\,+\, O(t^{16 c_1/5})
           \right)^{5/6}\; =\;  O(t^{8 c_1/3})\,.
  \end{eqnarray*}

 If $c_1>1$, then $O(\,t^{1+\frac{c_1-1}{m_j}}\,) < O(t)$ and
  $$
   \|\varepsilon^t\|_{L^{6/5}}\;
    =\;
    \left(\rule{0em}{1.2em}\right.
      \int_{r_2=0}^{O(\,t^{1+\frac{c_1-1}{m_j}}\,)}
       O(r_2)^{6/5}\cdot r_2\, dr_2\,
     \left.\rule{0em}{1.2em}\right)^{5/6}\;
    =\; O(\,t^{\frac{8}{3}\,(1+\frac{c_1-1}{m_j})}\,)\,.
  $$

 \bigskip

 \noindent
 $(a.4)$
 {\it Estimating $\|\varepsilon^t\|_{L^1}$ on $P_j^t$.}\hspace{1em}
 Similar to
  the estimation for $\|\varepsilon^t\|_{L^{6/5}}$ in Part (a.3),
 if $0<c_1\le 1$, then
  \begin{eqnarray*}
   \lefteqn{\|\varepsilon^t\|_{L^1}\; =\;
      \int_{r_2=0}^{O(t)}\, O(r_2)
         \cdot r_2\cdot\,O(1)\,dr_2           }\\
    && \hspace{8em}
     +\,
     \int_{r_2=O(t)}^{O(\,t^{1+\frac{c_1-1}{m_j}}\,)}
      O(\,t^{1-m_j}\,r_2^{m_j}\,)\cdot r_2\cdot
        O(\,t^{2(1-m_j)}\,r_2^{2(m_j-1)}\,)\, dr_2     \\[1.2ex]
    && =\; O(t^3)\,+\, O(t^{3c_1})\; =\;  O(t^{3c_1})\,.
  \end{eqnarray*}

 If $c_1>1$, then
  $$
   \|\varepsilon^t\|_{L^1}\;
    =\; \int_{r_2=0}^{O(\,t^{1+\frac{c_1-1}{m_j}}\,)}\, O(r_2)
         \cdot r_2\cdot\,O(1)\,dr_2\;
    =\; O(\,t^{3(1+\frac{c_1-1}{m_j})}\,)\,.
  $$

 \bigskip

 \noindent
 $(a.5)$
 {\it Estimating $\|d\varepsilon^t\|_{L^6}$ on $P_j^t$.}\hspace{1em}
 Note $ O(t^{1-m_j}r_2^{m_j-1})=O(1)$ occurs also at $r_2=O(t)$.
 Thus, similar to
  the estimation for $\|\varepsilon^t\|_{L^{6/5}}$ in Part (a.3),
 if $0<c_1\le 1$, then
  \begin{eqnarray*}
   \lefteqn{\|d\varepsilon^t\|_{L^6}\; =\;
    \left(
      \int_{r_2=0}^{O(t)}\, O(1)^6
         \cdot r_2\cdot\,O(1)\,dr_2  \right. }\\
    && \hspace{8em}
     +\,
     \int_{r_2=O(t)}^{O(\,t^{1+\frac{c_1-1}{m_j}}\,)}
      O(\,t^{1-m_j}\,r_2^{m_j-1}\,)^6\cdot r_2\cdot
        O(\,t^{2(1-m_j)}\,r_2^{2(m_j-1)}\,)\, dr_2\,
           \left.\rule{0em}{1.6em}\right)^{1/6}        \\[1.2ex]
    && =\; \left(O(t^2)\, +\,
            O(\,t^{2c_1(4-\frac{3}{m_j})+6(\frac{1}{m_j}-1)}\,)
           \right)^{1/6}\;
       =\; O(\,t^{c_1(\frac{4}{3}-\frac{1}{m_j})+(\frac{1}{m_j}-1)}\,)\,.
  \end{eqnarray*}

 If $c_1>1$, then
  $$
   \|d\varepsilon^t\|_{L^1}\;
    =\; \left(\rule{0em}{1.6em}\right.
         \int_{r_2=0}^{O(\,t^{1+\frac{c_1-1}{m_j}}\,)}\, O(1)^6
          \cdot r_2\cdot\,O(1)\,dr_2
        \left.\rule{0em}{1.6em}\right)^{1/6}\;
    =\; O(\,t^{\frac{1}{3}(1+\frac{c_1-1}{m_j})}\,)\,.
  $$

 \bigskip

 \noindent
 $(b)$ {\it On $Q_j^t$.}

 \bigskip

 \noindent
 $(b.1)$ {\it Basic data for estimates on $Q_j^t$.}\hspace{1em}
 Consider
  \begin{eqnarray*}
   \lefteqn{f^t(Q_j^t)\; =\; }\\
   &&
    \left\{
      (r_1e^{\sqrt{-1}\theta_1},\, u_3,\,
       r_2e^{\sqrt{-1}\theta_2},\, 0)\,
     \left|
      \begin{array}{l}
       \cdot\; r_2\;
        =\; a_j^{1/m_j}\,t^{(m_j-1)/m_j}\,
             \left[\frac{m_j}{m_j+1}\,\dot{\chi}^t(r_1)\,
                   +\,\chi^t(r_1)\,r_1^{1/m_j}\right]\,,  \\[1.8ex]
        \hspace{2em} b_1^t=t^{c_1}\, \le\, r_1\,
                     \le\, t^{c_2} = b_2^t\,; \\[1.2ex]
       \cdot\; m_j\,\theta_2\;=\; \theta_1\;\;\;\;
                              (\,\mbox{\rm mod}\; 2\pi\,)
      \end{array}\right.\hspace{-1ex}\right\}.
  \end{eqnarray*}
 The approximate metric is given by
  $$
   \begin{array}{l}
   ds^2\; =\;
     \left(1\,+\,
      a_j^{2/m_j}\,t^{2(m_j-1)/m_j}\,
      \left[\frac{m_j}{m_j+1}\,\ddot{\chi}^t(r_1)\,
            +\,\dot{\chi}^t(r_1)\,r_1^{1/m_j}
            +\,\frac{1}{m_j}\,\chi^t(r_1)\,r_1^{(1-m_j)/m_j}\right]^2
     \right)\,dr_1^2                                          \\[2ex]
    \hspace{6em}
    +\, \left(r_1^2\,
         +\,m_j^{-2}\,a_j^{2/m_j}\,t^{2(m_j-1)/m_j}\,
             \left[\frac{m_j}{m_j+1}\,\dot{\chi}^t(r_1)\,
                   +\,\chi^t(r_1)\,r_1^{1/m_j}\right]^2\right)\,
             d\theta_1^2\,
    +\, du_3^2\,,
   \end{array}
  $$
  which gives the approximate volume-form
  $$
   \begin{array}{l}
    \vol^t\; =\;
     \left(1\,+\,
      a_j^{2/m_j}\,t^{2(m_j-1)/m_j}\,
      \left[\frac{m_j}{m_j+1}\,\ddot{\chi}^t(r_1)\,
            +\,\dot{\chi}^t(r_1)\,r_1^{1/m_j}
            +\,\frac{1}{m_j}\,\chi^t(r_1)\,r_1^{(1-m_j)/m_j}\right]^2
     \right)^{1/2}\,                                          \\[2ex]
    \hspace{2em}
     \cdot\, \left(r_1^2\,
         +\,m_j^{-2}\,a_j^{2/m_j}\,t^{2(m_j-1)/m_j}\,
             \left[\frac{m_j}{m_j+1}\,\dot{\chi}^t(r_1)\,
                   +\,\chi^t(r_1)\,r_1^{1/m_j}\right]^2\right)^{1/2}\,
    dr_1\wedge d\theta_1\wedge du_3\,.
   \end{array}
  $$
 Thus,
  $$
   \begin{array}{ccl}
    |\vol^t|  & \le
     & \left( 1\,
        +\, O(\,t^{2(1-\frac{1}{m_j})}\,)\,
           \left[
            O(\,t^{-2c_2}\,)\,
            +\, O(\,t^{-c_2}\,r_1^{\frac{1}{m_j}}\,)\,
            +\, O(\,r_1^{\frac{1}{m_j}-1}\,) \right]^2\,\right)^{1/2} \\[3ex]
    && \hspace{2em} \cdot
       \left( r_1^2\,
        +\, O(\,t^{2(1-\frac{1}{m_j})}\,)\,
           \left[
            O(\,t^{-c_2}\,)\,
            +\, O(\,r_1^{\frac{1}{m_j}}\,) \right]^2\,\right)^{1/2}\,
       |dr_1\wedge d\theta_1\wedge du_3|       \\[3ex]
    && =
       \left( 1\,
        +\, O(\,t^{2(1-\frac{1}{m_j})}\,)\,
           \left[
            O(\,t^{-2c_2}\,)\,
            +\, O(\,r_1^{\frac{1}{m_j}-1}\,) \right]^2\,\right)^{1/2} \\[3ex]
    && \hspace{12em} \cdot
       \left( r_1^2\,
        +\, O(\,t^{2(1-\frac{1}{m_j}-c_2)}\,)\, \right)^{1/2}\,
       |dr_1\wedge d\theta_1\wedge du_3|       \\[3ex]
   \end{array}
  $$
 The imaginary part $\varepsilon^t$ of the ratio calibration/volume-form
  is approximated by
  $$
   \begin{array}{rcl}
    \varepsilon^t  &  =  &  O(r_2)\,+\, O(dr_2/dr_1)  \\[1.2ex]
     & =
     &  O\left(\,t^{(m_j-1)/m_j}\,
               \left[\frac{m_j}{m_j+1}\,\dot{\chi}^t(r_1)\,
                  +\,\chi^t(r_1)\,r_1^{1/m_j}\right]\,\right) \\[1.2ex]
    && \hspace{1em}
       +\, O\left(\,t^{(m_j-1)/m_j}\,
                \left[\frac{m_j}{m_j+1}\,\ddot{\chi}^t(r_1)\,
                  +\,\dot{\chi}^t(r_1)\,r_1^{1/m_j}
                  +\,\frac{1}{m_j}\,\chi^t(r_1)\,r_1^{(1-m_j)/m_j}
                \right]\,\right)\,;
   \end{array}
  $$
 Thus,
  $$
   \begin{array}{ccl}
    |\varepsilon^t|  & \le
     & O(\,t^{1-\frac{1}{m_j}}\,)\,
       \left[
         O(\,t^{-2c_2}\,)\,
         +\, O(\,t^{-c_2}\,r_1^{\frac{1}{m_j}}\,)\,
         +\, O(\,r_1^{\frac{1}{m_j}-1}\,)
      \right]    \\[2ex]
    & =
     & O(\,t^{1-\frac{1}{m_j}}\,)\,
       \left[
         O(\,t^{-2c_2}\,)\, +\, O(\,r_1^{\frac{1}{m_j}-1}\,)
      \right]\,.
   \end{array}
  $$
 $|d\varepsilon^t|$ is approximated by
  $$
   \begin{array}{rcl}
    |d\varepsilon^t|
     &  =   &  O(dr_2/dr_1)\,+\, O(d^2r_2/dr_1^2)         \\[2ex]
    & = & O\left(\,t^{(m_j-1)/m_j}\,
                \left[\frac{m_j}{m_j+1}\,\ddot{\chi}^t(r_1)\,
                  +\,\dot{\chi}^t(r_1)\,r_1^{1/m_j}
                  +\,\frac{1}{m_j}\,\chi^t(r_1)\,r_1^{(1-m_j)/m_j}
                \right]\,\right)                          \\[3ex]
    && \hspace{1em}
       +\, O\left(\,
        t^{(m_j-1)/m_j}\,
         \left[\rule{0em}{1em}\right.
          \frac{m_j}{m_j+1}\,\dddot{\chi}^{\,t}(r_1)\,
          +\, \ddot{\chi}^t(r_1)\, r_1^{1/m_j}\,  \right. \\[3ex]
    && \hspace{6em}
       \left.
        +\, \frac{2}{m_j}\,\dot{\chi}^t(r_1)\,r_1^{(1-m_j)/m_j}
        +\, \frac{1-m_j}{m_j^2}\,\chi^t(r_1)\,r_1^{(1-2m_j)/m_j}
                 \left.\rule{0em}{1em}\right]\, \right)   \\[3ex]
    & \le
     & O(\,t^{1-\frac{1}{m_j}}\,)\,
       \left[
        O(\,t^{-3c_2}\,)\,+\,O(\,t^{-2c_2}\,r_1^{\frac{1}{m_j}}\,)\,
        +\, O(\,t^{-c_2}\,r_1^{\frac{1}{m_j}-1}\,)\,
        +\, O(\,r_1^{\frac{1}{m_j}-2}\,) \right]          \\[3ex]
    & =
     & O(\,t^{1-\frac{1}{m_j}}\,)\,
       \left[
        O(\,t^{-3c_2}\,)\,
        +\, O(\,t^{-c_2}\,r_1^{\frac{1}{m_j}-1}\,)\,
        +\, O(\,r_1^{\frac{1}{m_j}-2}\,) \right]          \\[3ex]
    & =
     & O(\,t^{1-\frac{1}{m_j}}\,)\,
       \left[
        O(\,t^{-3c_2}\,)\,
        +\, O(\,r_1^{\frac{1}{m_j}-2}\,) \right]\,.
   \end{array}
  $$
 Here, we use
  $O(\,t^{-c_2}\,r_1^{\frac{1}{m_j}-1}\,) + O(\,r_1^{\frac{1}{m_j}-2}\,)
   = O(\,r_1^{\frac{1}{m_j}-2}\,)$
  for $0<r_1<t^{c_2}$ in the last equality.

 \bigskip

 \noindent
 $(b.2)$
 {\it Estimating $\|\varepsilon^t\|_{C^0}$ on $Q_j^t$.}\hspace{1em}
 It follows from Part (b.1) (cf.\ Proposition~3.2.4) that
  $$
   \|\varepsilon^t\|_{C^0}\;
    \le\;
     O(\,t^{ 1-\frac{1}{m_j}-2c_2}\,)\,
       +\, O(\,t^{(1-\frac{1}{m_j})(1-c_1)}\,)\,.
  $$

 %

 \bigskip

 \noindent
 $(b.3)$
 {\it Estimating $\|\varepsilon^t\|_{L^{6/5}}$ on $Q_j^t$.}\hspace{1em}
 It follows from Part (b.1) that
 \begin{eqnarray*}
  \lefteqn{\|\varepsilon^t\|_{L^{6/5}}\; \le\;
   \left(
    2\pi m_j l_j\,
    \int_{r_1=t^{c_1}}^{t^{c_2}}
     \left|\,
      O(\,t^{1-\frac{1}{m_j}}\,)\,
      \left[
        O(\,t^{-2c_2}\,)\, +\, O(\,r_1^{\frac{1}{m_j}-1}\,)
      \right]
     \right|^{6/5} \right.                          }\\[1.2ex]
  && \hspace{7em} \cdot
      \left( 1\,
        +\, O(\,t^{2(1-\frac{1}{m_j})}\,)\,
           \left[
            O(\,t^{-2c_2}\,)\,
            +\, O(\,r_1^{\frac{1}{m_j}-1}\,)
                               \right]^2\, \right)^{1/2} \\[1.2ex]
  && \hspace{16em} \left.
      \cdot
      \left( r_1^2\, +\, O(\,t^{2(1-\frac{1}{m_j}-c_2)}\,)\,
       \right)^{1/2}\,
      dr_1\,\right)^{5/6}\,.
 \end{eqnarray*}
 The equalities
  $$
   \begin{array}{rclccrcl}
    O(\,t^{-2c_2}\,) & = &  O(\,r_1^{\frac{1}{m_j}-1}\,)\,; &&
     & 1 & = & O(\,t^{2(1-\frac{1}{m_j}-2c_2)}\,)\,, \\[1.2ex]
    1 & = & O(\,t^{2(1-\frac{1}{m_j})}r_1^{2(\frac{1}{m_j}-1)}\,)\,; &&
     & r_1^2 & = & O(\,t^{2(1-\frac{1}{m_j}-c_2)}\,)
   \end{array}
  $$
  are solved by
  $$
   r_1\,=\,O(\,t^{\frac{2c_2m_j}{m_j-1}}\,)\,;\hspace{2em}
   c_2\,=\,\frac{1}{2}(1-\frac{1}{m_j})\,,\hspace{2em}
   r_1\,=\,O(t)\,;\hspace{2em}
   r_1\,=\,O(\,t^{1-\frac{1}{m_j}-c_2}\,)
  $$
  respectively.
 Denote the $t$-exponents:
  $$
   E_1\, :=\, c_1\,,\hspace{2em}
   E_2\, :=\, c_2\,,\hspace{2em}
   E_3\, :=\, \frac{2c_2m_j}{m_j-1}\,,\hspace{2em}
   E_4\, :=\, 1-\frac{1}{m_j}-c_2\,,\hspace{2em}
   E_5\, :=\, 1\,.
  $$
 Then the equations $E_i=E_j$, $1\le i<j\le 5$, divides
  $\{(c_1,c_2)\in ({\Bbb R}_+)^2\,:\, c_1>c_2\}$
  into 13 regions:

 \bigskip

 \centerline{\small\it
 \noindent
 \begin{tabular}{|l|c|c|} \hline
   $\hspace{.8em}$region      & description in $\{0<c_2<c_1\}$
    & order                   \rule{0em}{1.2em}\\[.6ex] \hline\hline
   Region$_{\,(1)}$
    & $1\le c_2\, (< c_1)\,,\;\;
       c_2\ge \frac{1}{2}(1-\frac{1}{m_j})c_1$
    & $E_4\le E_5\le E_2\le E_1\le E_3$  \rule{0em}{1.6em} \\[1.4ex] \hline
   Region$_{\,(2)}$
    & $1\le c_2\le \frac{1}{2}(1-\frac{1}{m_j})c_1$
    & $E_4\le E_5\le E_2\le E_3\le E_1$  \rule{0em}{1.6em} \\[1.4ex] \hline
   Region$_{\,(3)}$
    & $\frac{1}{2}(1-\frac{1}{m_j}) \le c_2\le 1\,,\;\;
       c_2\le \frac{1}{2}(1-\frac{1}{m_j})c_1$
    & $E_4\le E_2\le E_5\le E_3\le E_1$  \rule{0em}{1.6em} \\[1.4ex] \hline
   Region$_{\,(4)}$
    & $c_1\ge 1\,,\;\;
       \frac{(m_j-1)^2}{m_j(3m_j-1)}
        \le c_2 \le \frac{1}{2}(1-\frac{1}{m_j})$
    & $E_2\le E_4\le E_3\le E_5\le E_1$  \rule{0em}{1.6em} \\[1.4ex] \hline
   Region$_{\,(5)}$
    & $c_1\ge 1\,,\;\,
       0< c_2\le  \frac{(m_j-1)^2}{m_j(3m_j-1)}$
    & $E_2\le E_3\le E_4\le E_5\le E_1$  \rule{0em}{1.6em} \\[1.4ex] \hline
   Region$_{\,(6)}$
    & $c_1\ge 1\,,\;\;
       \frac{1}{2}(1-\frac{1}{m_j})c_1 \le c_2 \le 1$
    & $E_4\le E_2\le E_5\le E_1\le E_3$  \rule{0em}{1.6em} \\[1.4ex] \hline
   Region$_{\,(7)}$
    & $c_1\le 1\,,\;\;
       \frac{1}{2}(1-\frac{1}{m_j})\le c_2\, (<c_1)$
    & $E_4\le E_2\le E_1\le E_5\le E_3$  \rule{0em}{1.6em} \\[1.4ex] \hline
   Region$_{\,(8)}$
    & $(1-\frac{1}{m_j})-c_1
        \le c_2\le \frac{1}{2}(1-\frac{1}{m_j})\,,\;\;
       c_2\ge \frac{1}{2}(1-\frac{1}{m_j})c_1$
    & $E_2\le E_4\le E_1\le E_3\le E_5$  \rule{0em}{1.6em} \\[1.4ex] \hline
   Region$_{\,(9)}$
    & $c_1\le 1\,,\;\;
       \frac{(m_j-1)^2}{m_j(3m_j-1)}
        \le c_2 \le \frac{1}{2}(1-\frac{1}{m_j})c_1$
    & $E_2\le E_4\le E_3\le E_1\le E_5$  \rule{0em}{1.6em} \\[1.4ex] \hline
   Region$_{\,(10)}$
    & $c_1\le 1\,,\;\;
       (0<)\,c_2\le \frac{(m_j-1)^2}{m_j(3m_j-1)}\,,\;\;
       c_2\ge (1-\frac{1}{m_j})-c_1$
    & $E_2\le E_3\le E_4\le E_1\le E_5$  \rule{0em}{1.6em} \\[1.4ex] \hline
   Region$_{\,(11)}$
    & $\frac{(m_j-1)^2}{m_j(3m_j-1)}\le c_2\,(< c_1)\,,\;\;
       c_2\le (1-\frac{1}{m_j})-c_1$
    & $E_2\le E_1\le E_4\le E_3\le E_5$  \rule{0em}{1.6em} \\[1.4ex] \hline
   Region$_{\,(12)}$
    & $\frac{1}{2}(1-\frac{1}{m_j})c_1\le c_2\,(<c_1)\,,\;\;
        c_2\le \frac{(m_j-1)^2}{m_j(3m_j-1)}$
    & $E_2\le E_1\le E_3\le E_4\le E_5$  \rule{0em}{1.6em} \\[1.4ex] \hline
   Region$_{\,(13)}$
    & $(0<)\,c_2\le \frac{1}{2}(1-\frac{1}{m_j})c_1\,,\;\;
       c_2\le (1-\frac{1}{m_j})-c_1$
    & $E_2\le E_3\le E_1\le E_4\le E_5$  \rule{0em}{1.6em} \\[1.4ex] \hline
 \end{tabular}
 } 

 \bigskip

 \noindent
 Once the region $(c_1,c_2)$ lies is fixed,
 the decomposition of the integral $\int_{r_1=t^{c_1}}^{c_2}$
   is determined by the order of $E_1,\,E_2,\,E_3,\,E_4,\,E_5$;
  and for each part of the integral,
   the dominant $t$-order term in each factor of the integrand
  is determined.
 Similar computations as those in Part (a) give then
  the following bounds for $\|\varepsilon^t\|_{L^{6/5}}$ on $Q_j^t$:

 \bigskip

 \centerline{\small\it
 \begin{tabular}{|l|c|} \hline
  $(c_1,c_2)$    &   $\|\varepsilon^t\|_{L^{6/5}}\;\le\;\bullet$
                            \rule{0em}{1.2em} \\[.6ex] \hline\hline
  Region$_{\,(1)}$
   & $O(\,t^{\frac{8}{3}(1-\frac{1}{m_j})-\frac{11}{3}c_2}\,)$
                              \rule{0em}{2em} \\[.6ex] \hline
  Region$_{\,(2)}$
   & $O(\,t^{(\frac{8}{3}-\frac{11}{6}c_1)(1-\frac{1}{m_j})
             +\frac{5}{6}(c_1-c_2)}\,)\,
      +\, O(\,t^{\frac{8}{3}(1-\frac{1}{m_j})-\frac{11}{3}c_2}\,)$
                              \rule{0em}{2em} \\[.6ex] \hline
  Region$_{\,(3)}$
   & $O(\,t^{(\frac{8}{3}-\frac{11}{6}c_1)(1-\frac{1}{m_j})
             +\frac{5}{6}(c_1-c_2)}\,)\,
      +\, O(\,t^{\frac{7}{2}-\frac{8}{3m_j}-\frac{9}{2}c_2}\,)\,
      +\, O(\,t^{\frac{8}{3}(1-\frac{1}{m_j})-\frac{11}{3}c_2}\,)$
                              \rule{0em}{2em} \\[.6ex] \hline
  Region$_{\,(4)}$
   & $O(\,t^{(\frac{8}{3}-\frac{11}{6}c_1)(1-\frac{1}{m_j})
             +\frac{5}{6}(c_1-c_2)}\,)\,
      +\, O(\,t^{\frac{11}{6}(1-\frac{1}{m_j})
                 +(\frac{5}{6}-2m_j+\frac{5}{3(m_j-1)})c_2}\,)$
      \hspace{4.4em}          \rule{0em}{2em} \\ [.6ex]
   & \hspace{4em}
     $+\, O(\,t^{\frac{8}{3}(1-\frac{1}{m_j})-\frac{11}{3}c_2}\,)\,
      +\, O(\,t^{1-\frac{1}{m_j}-\frac{c_2}{3}}\,)$\hspace{2em}
     for $2\le m_j\le 5\,$,   \rule{0em}{1em} \\[.6ex]
  & $O(\,t^{(\frac{8}{3}-\frac{11}{6}c_1)(1-\frac{1}{m_j})
             +\frac{5}{6}(c_1-c_2)}\,)\,
     +\, O(\,t^{\frac{11}{6}(1-\frac{1}{m_j}-\frac{5}{6}c_2)}\,
             |\log t|^{5/6}\,)$
                              \rule{0em}{2em} \\ [.6ex]
   & \hspace{2em}
     $+\, O(\,t^{\frac{8}{3}(1-\frac{1}{m_j})-\frac{11}{3}c_2}\,)\,
      +\, O(\,t^{1-\frac{1}{m_j}-\frac{c_2}{3}}\,)$\hspace{2em}
     for $m_j=6\,$,           \rule{0em}{1em} \\[.6ex]
  & $O(\,t^{(\frac{8}{3}-\frac{11}{6}c_1)(1-\frac{1}{m_j})
             +\frac{5}{6}(c_1-c_2)}\,)\,
     +\, O(\,t^{\frac{5}{6}(2-\frac{1}{m_j}-c_2)}\,)$
      \hspace{11em}            \rule{0em}{2em} \\ [.6ex]
   & \hspace{2em}
     $+\, O(\,t^{\frac{8}{3}(1-\frac{1}{m_j})-\frac{11}{3}c_2}\,)\,
      +\, O(\,t^{1-\frac{1}{m_j}-\frac{c_2}{3}}\,)$\hspace{2em}
     for $m_j\ge 7$           \rule{0em}{1em} \\[.6ex] \hline
  Region$_{\,(5)}$
   & $O(\,t^{\frac{8}{3}(1-\frac{1}{m_j})
             -\frac{5}{6}c_2 +(\frac{11}{6m_j}-1)c_1}\,)\,
      +\, O(\,t^{\frac{5}{3}-\frac{5}{6m_j}-\frac{5}{6}c_2}\,)$
      \hspace{10em}          \rule{0em}{2em} \\ [.6ex]
   & \hspace{1em}
     $+O(\,t^{\frac{11}{6}(1-\frac{1}{m_j})
             +\frac{(11-m_j)c_2}{3(m_j-1)}}\,)\,
      +\, O(\,t^{1-\frac{1}{m_j}-\frac{1}{3}c_2}\,)$\hspace{2em}
     for $2\le m_j\le 10\,$,   \rule{0em}{1em} \\[.6ex]
  & $O(\,t^{\frac{8}{3}(1-\frac{1}{m_j})
             -\frac{5}{6}c_2 +(\frac{11}{6m_j}-1)c_1}\,)\,
      +\, O(\,t^{\frac{5}{3}-\frac{5}{6m_j}-\frac{5}{6}c_2}\,)$
      \hspace{10em}          \rule{0em}{2em} \\ [.6ex]
   & \hspace{2em}
     $+\, O(\,t^{5/3}\,|\log t|^{5/6}\,)\,
      +\, O(\,t^{1-\frac{1}{m_j}-\frac{1}{3}c_2}\,)$\hspace{2em}
     for $m_j=11\,$,   \rule{0em}{1em} \\[.6ex]
  & $O(\,t^{\frac{8}{3}(1-\frac{1}{m_j})
             -\frac{5}{6}c_2 +(\frac{11}{6m_j}-1)c_1}\,)\,
      +\, O(\,t^{\frac{5}{3}-\frac{5}{6m_j}-\frac{5}{6}c_2}\,)$
      \hspace{10em}          \rule{0em}{2em} \\ [.6ex]
   & \hspace{1.7em}
     $+\, O(\,t^{\frac{1}{6}(1-\frac{1}{m_j})(10+\frac{11}{m_j})
                 +\frac{1}{6}(1-\frac{11}{m_j})c_2}\,)
      +\, O(\,t^{1-\frac{1}{m_j}-\frac{1}{3}c_2}\,)$\hspace{2em}
     for $m_j\ge 12$   \rule{0em}{1em} \\[.6ex] \hline
  Region$_{\,(6)}$
   & $O(\,t^{\frac{7}{2}-\frac{8}{3m_j}-\frac{9}{2}c_2}\,)\,
      +\, O(\,t^{\frac{8}{3}(1-\frac{1}{m_j})-\frac{11}{3}c_2}\,)$
                              \rule{0em}{2em} \\[.6ex] \hline
  Region$_{\,(7)}$
   & $O(\,t^{\frac{8}{3}(1-\frac{1}{m_j})-\frac{11}{3}c_2}\,)$
                              \rule{0em}{2em} \\[.6ex] \hline
  Region$_{\,(8)}$
   & $O(\,t^{\frac{8}{3}(1-\frac{1}{m_j})-\frac{11}{3}c_2}\,)\,
      +\, O(\,t^{1-\frac{1}{m_j}-\frac{1}{3}c_2}\,)$
                              \rule{0em}{2em} \\[.6ex] \hline
  Region$_{\,(9)}$
   & $O(\,t^{\frac{11}{6}(1-\frac{1}{m_j})-\frac{17}{6}c_2
             +\frac{5m_jc_2}{3(m_j-1)}}\,)\,
      +\, O(\,t^{\frac{8}{3}(1-\frac{1}{m_j})-\frac{11}{3}c_2}\,)\,
      +\, O(\,t^{1-\frac{1}{m_j}-\frac{1}{3}c_2}\,)$ \hspace{5em}
                              \rule{0em}{2em} \\[.6ex]
   & \hspace{16em} for $2\le m_j\le 5$,       \\[.6ex]
  & $O(\,t^{\frac{11}{6}(1-\frac{1}{m_j})-\frac{5}{6}c_2}\,
         |\log t|^{5/6}\,)\,
      +\, O(\,t^{\frac{8}{3}(1-\frac{1}{m_j})-\frac{11}{3}c_2}\,)\,
      +\, O(\,t^{1-\frac{1}{m_j}-\frac{1}{3}c_2}\,)$
                              \rule{0em}{2em} \\[.6ex]
   & \hspace{14em} for $m_j=6$,       \\[.6ex]
  & $O(\,t^{\frac{11}{6}(1-\frac{1}{m_j})
             -(\frac{1}{m_j}-\frac{1}{6})c_1
             -\frac{5}{6}c_2}\,)\,
      +\, O(\,t^{\frac{8}{3}(1-\frac{1}{m_j})-\frac{11}{3}c_2}\,)\,
      +\, O(\,t^{1-\frac{1}{m_j}-\frac{1}{3}c_2}\,)$ \hspace{5em}
                              \rule{0em}{2em} \\[.6ex]
   & \hspace{14em} for $m_j\ge 7$       \\[.6ex] \hline
 \end{tabular}
  }


 \centerline{\small\it
 \begin{tabular}{|l|c|} \hline
  Region$_{\,(10)}$
   & $O(\,t^{\frac{8}{3}(1-\frac{1}{m_j})-(1-\frac{1}{m_j})^2
             -(\frac{2}{3}+\frac{1}{m_j})c_2}\,)\,
      +\, O(\,t^{1-\frac{1}{m_j}
                 +(\frac{1}{m_j}+\frac{2}{3})\frac{2c_2m_j}{m_j-1}}\,)\,
      +\, O(\,t^{1-\frac{1}{m_j}-\frac{1}{3}c_2}\,)$ \hspace{1em}
                              \rule{0em}{2em} \\[.6ex]
   & \hspace{16em} for $2\le m_j\le 5$,       \\[.6ex]
  & $O(\,t^{\frac{55}{36}-\frac{5}{6}c_2}\,|\log t|^{5/6}\,)\,
      +\, O(\,t^{1-\frac{1}{m_j}
                 +(\frac{1}{m_j}+\frac{2}{3})\frac{2c_2m_j}{m_j-1}}\,)\,
      +\, O(\,t^{1-\frac{1}{m_j}-\frac{1}{3}c_2}\,)$
                              \rule{0em}{2em} \\[.6ex]
   & \hspace{14em} for $m_j=6$,       \\[.6ex]
  & $O(\,t^{\frac{11}{6}(1-\frac{1}{m_j})
            +(\frac{1}{m_j}-\frac{1}{6})c_1-\frac{5}{6}c_2}\,)\,
      +\, O(\,t^{1-\frac{1}{m_j}
                 +(\frac{1}{m_j}+\frac{2}{3})\frac{2c_2m_j}{m_j-1}}\,)\,
      +\, O(\,t^{1-\frac{1}{m_j}-\frac{1}{3}c_2}\,)$ \hspace{2.4em}
                              \rule{0em}{2em} \\[.6ex]
   & \hspace{14em} for $m_j\ge 7$       \\[.6ex] \hline
  Region$_{\,(11)}$
   & $O(\,t^{1-\frac{1}{m_j}-\frac{1}{3}c_2}\,)$
                              \rule{0em}{2em} \\[.6ex] \hline
  Region$_{\,(12)}$
   & $O(\,t^{1-\frac{1}{m_j}-\frac{1}{3}c_2}\,)$
                              \rule{0em}{2em} \\[.6ex] \hline
  Region$_{\,(13)}$
   & $O(\,t^{1-\frac{1}{m_j}+\frac{4}{3}c_2+\frac{10c_2}{3(m_j-1)}}\,)\,
      +\, O(\,t^{1-\frac{1}{m_j}-\frac{1}{3}c_2}\,)$
                              \rule{0em}{2em} \\[.6ex] \hline
 \end{tabular}
 } 

 %

 \bigskip
 \newpage

 \noindent
 $(b.4)$
 {\it Estimating $\|\varepsilon^t\|_{L^1}$ on $Q_j^t$.}\hspace{1em}
 Except an adjustment of powers
   due the change from $L^{6/5}$-norm to $L^1$-norm,
  the computations in this case is completely the same
  as those in Part~(b.3):

 \bigskip

 \centerline{\small\it
 \begin{tabular}{|l|c|} \hline
  $(c_1,c_2)$    &   $\|\varepsilon^t\|_{L^1}\;\le\;\bullet$
                            \rule{0em}{1.2em} \\[.6ex] \hline\hline
  Region$_{\,(1)}$
   & $O(\,t^{3(1-\frac{1}{m_j})-4c_2}\,)$
                              \rule{0em}{2em} \\[.6ex] \hline
  Region$_{\,(2)}$
   & $O(\,t^{\frac{3}{2}-c_2}\,|\log t|\,)\,
      +\, O(\,t^{3(1-\frac{1}{m_j})-4c_2}\,)$
                              \rule{0em}{2em} \\[.6ex]
   & \hspace{14em} for $m_j=2$,       \\[.6ex]
  & $O(\,t^{3(1-\frac{1}{m_j})+(\frac{2}{m_j}-1)c_1-c_2}\,)\,
     +\, O(\,t^{3(1-\frac{1}{m_j})-4c_2}\,)$
                              \rule{0em}{2em} \\[.6ex]
   & \hspace{14em} for $m_j\ge 3$       \\[.6ex] \hline
  Region$_{\,(3)}$
   & $O(\,t^{\frac{3}{2}-c_2}\,|\log t|\,)\,
      +\, O(\,t^{4-\frac{3}{m_j}-5c_2}\,)
      +\, O(\,t^{3(1-\frac{1}{m_j})-4c_2}\,)$
                              \rule{0em}{2em} \\[.6ex]
   & \hspace{14em} for $m_j=2$,       \\[.6ex]
  & $O(\,t^{3(1-\frac{1}{m_j})+(\frac{2}{m_j}-1)c_1-c_2}\,)\,
     +\, O(\,t^{4-\frac{3}{m_j}-5c_2}\,)
     +\, O(\,t^{3(1-\frac{1}{m_j})-4c_2}\,)$
                              \rule{0em}{2em} \\[.6ex]
   & \hspace{14em} for $m_j\ge 3$       \\[.6ex] \hline
  Region$_{\,(4)}$
   & $O(\,t^{\frac{3}{2}-c_2}\,|\log t|\,)\,
      +\, O(\,t^{2(1-\frac{1}{m_j})-c_2+\frac{2c_2}{m_j-1})}\,)\,
      +\, O(\,t^{3(1-\frac{1}{m_j})-4c_2}\,)\,
      +\, O(\,t^{1-\frac{1}{m_j}}\,)$
                              \rule{0em}{2em} \\[.6ex]
   & \hspace{14em} for $m_j=2$,               \\[.6ex]
  & $O(\,t^{3(1-\frac{1}{m_j})+(\frac{2}{m_j}-1)c_1-c_2}\,)\,
      +\, O(\,t^{2(1-\frac{1}{m_j})-c_2+\frac{2c_2}{m_j-1})}\,)\,
      +\, O(\,t^{3(1-\frac{1}{m_j})-4c_2}\,)\,
      +\, O(\,t^{1-\frac{1}{m_j}}\,)$
                              \rule{0em}{2em} \\[.6ex]
   & \hspace{14em} for $m_j\ge 3$             \\[.6ex] \hline
  Region$_{\,(5)}$
   & $O(\,t^{\frac{3}{2}-c_2}|\log t|\,)\,
      +\, O(\,t^{2(1-\frac{1}{m_j})+\frac{4c_2}{m_j-1}}\,)\,
      +\, O(\,t^{1-\frac{1}{m_j}}\,)$
                              \rule{0em}{2em} \\[.6ex]
   & \hspace{14em} for $m_j=2$,               \\[.6ex]
  & $O(\,t^{3(1-\frac{1}{m_j})+(\frac{2}{m_j}-1)c_1-c_2}\,)\,
     +\, O(\,t^{2-\frac{1}{m_j}-c_2}\,)\,
     +\, O(\,t^{2(1-\frac{1}{m_j})+\frac{4c_2}{m_j-1}}\,)\,
     +\, O(\,t^{1-\frac{1}{m_j}}\,)$
                              \rule{0em}{2em} \\[.6ex]
   & \hspace{14em} for $m_j\ge 3$             \\[.6ex] \hline
  Region$_{\,(6)}$
   & $O(\,t^{4-\frac{3}{m_j}-5c_2}\,)\,
      +\, O(\,t^{3(1-\frac{1}{m_j})-4c_2}\,)$
                              \rule{0em}{2em} \\[.6ex] \hline
  Region$_{\,(7)}$
   & $O(\,t^{3(1-\frac{1}{m_j})-4c_2}\,)$
                              \rule{0em}{2em} \\[.6ex] \hline
  Region$_{\,(8)}$
   & $O(\,t^{3(1-\frac{1}{m_j})-4c_2}\,)\,
      +\, O(\,t^{1-\frac{1}{m_j}}\,)$
                              \rule{0em}{2em} \\[.6ex] \hline
  Region$_{\,(9)}$
   & $O(\,t^{2(1-\frac{1}{m_j})-c_2-\frac{2c_2}{m_j-1}}\,)\,
      +\, O(\,t^{3(1-\frac{1}{m_j})-4c_2}\,)\,
      +\, O(\,t^{1-\frac{1}{m_j}}\,)$
                              \rule{0em}{2em} \\[.6ex] \hline
  Region$_{\,(10)}$
   & $O(\,t^{2-(1+\frac{1}{m_j})(c_2+\frac{1}{m_j})}\,)\,
      +\, O(\,t^{1-\frac{1}{m_j}+\frac{2(m_j+1)}{m_j-1}\,c_2}\,)\,
      +\, O(\,t^{1-\frac{1}{m_j}}\,)$
                              \rule{0em}{2em} \\[.6ex] \hline
  Region$_{\,(11)}$
   & $O(\,t^{1-\frac{1}{m_j}}\,)$
                              \rule{0em}{2em} \\[.6ex] \hline
  Region$_{\,(12)}$
   & $O(\,t^{1-\frac{1}{m_j}}\,)$
                              \rule{0em}{2em} \\[.6ex] \hline
  Region$_{\,(13)}$
   & $O(\,t^{1-\frac{1}{m_j}+\frac{2(m_j+1)}{m_j-1}\,c_2}\,)\,
      +\, O(\,t^{1-\frac{1}{m_j}}\,)$
                              \rule{0em}{2em} \\[.6ex] \hline
 \end{tabular}
 } 

 \bigskip

 %

 \bigskip

 \noindent
 $(b.5)$
 {\it Estimating $\|d\varepsilon^t\|_{L^6}$ on $Q_j^t$.}\hspace{1em}
 It follows from Part (b.1) that
 \begin{eqnarray*}
  \lefteqn{\|\varepsilon^t\|_{L^6}\; \le\;
   \left(
    2\pi m_j l_j\,
    \int_{r_1=t^{c_1}}^{t^{c_2}}
     \left|\,
      O(\,t^{1-\frac{1}{m_j}}\,)\,
      \left[
        O(\,t^{-3c_2}\,)\, +\, O(\,r_1^{\frac{1}{m_j}-2}\,)
      \right]
     \right|^6 \right.                          }\\[1.2ex]
  && \hspace{7em} \cdot
      \left( 1\,
        +\, O(\,t^{2(1-\frac{1}{m_j})}\,)\,
           \left[
            O(\,t^{-2c_2}\,)\,
            +\, O(\,r_1^{\frac{1}{m_j}-1}\,)
                               \right]^2\, \right)^{1/2} \\[1.2ex]
  && \hspace{16em} \left.
      \cdot
      \left( r_1^2\, +\, O(\,t^{2(1-\frac{1}{m_j}-c_2)}\,)\,
       \right)^{1/2}\,
      dr_1\,\right)^{1/6}\,.
 \end{eqnarray*}
 Similar to the discussion in Part~(b.3),
 the equalities
  $$
   \begin{array}{ccccc}
    O(\,t^{-3c_2}\,)\; =\;  O(\,r_1^{\frac{1}{m_j}-2}\,)\,;
     && O(\,t^{-2c_2}\,)\; =\; O(\,r_1^{\frac{1}{m_j}-1}\,)\,, \\[1.2ex]
    1\; =\; O(\,t^{2(1-\frac{1}{m_j}-2c_2)}\,)\,,
     && 1\; =\; O(\,t^{2(1-\frac{1}{m_j})}r_1^{2(\frac{1}{m_j}-1)}\,)\,;
     && r_1^2\; =\; O(\,t^{2(1-\frac{1}{m_j}-c_2)}\,)
   \end{array}
  $$
  are solved by
  $$
   r_1\,=\,O(\,t^{\frac{3c_2m_j}{2m_j-1}}\,)\,;\hspace{1em}
   r_1\,=\,O(\,t^{\frac{2c_2m_j}{m_j-1}}\,)\,;\hspace{1em}
   c_2\,=\,\frac{1}{2}(1-\frac{1}{m_j})\,,\hspace{1em}
   r_1\,=\,O(t)\,;\hspace{1em}
   r_1\,=\,O(\,t^{1-\frac{1}{m_j}-c_2}\,)
  $$
  respectively.
 Recall/denote the $t$-exponents:
  $$
   E_1\, :=\, c_1\,,\hspace{.8em}
   E_2\, :=\, c_2\,,\hspace{.8em}
   E_3\, :=\, \frac{2c_2m_j}{m_j-1}\,,\hspace{.8em}
   E_3^{\prime}\, :=\, \frac{3c_2m_j}{2m_j-1}\,,\hspace{.8em}
   E_4\, :=\, 1-\frac{1}{m_j}-c_2\,,\hspace{.8em}
   E_5\, :=\, 1\,.
  $$
 Then the equations
  $E_i$ (or $E_3^{\prime}$) $=E_j$ (or $E_3^{\prime}$),
   $1\le i<j\le 5$,
  refine the previous $13$-region decomposition of
  $\{(c_1,c_2)\in ({\Bbb R}_+)^2\,:\, c_1>c_2\}$
  further into 26 regions:

 \bigskip

 \centerline{\footnotesize\it
 \noindent
 \begin{tabular}{|l|c|c|} \hline
   $\hspace{.8em}$region      & description in $\{0<c_2<c_1\}$
    & order                   \rule{0em}{1.2em}\\[.6ex] \hline\hline
   Region$_{\,(1)_1}$
    & $1\le c_2\, (< c_1)\,,\;\;
       c_2\ge \frac{2}{3}(1-\frac{1}{2m_j})c_1$
    & {\tiny $E_4\le E_5\le E_2\le E_1\le E_3^{\prime}\le E_3$}
                                  \rule{0em}{1.6em} \\[1.4ex] \hline
   Region$_{\,(1)_2}$
    & $1\le c_2 \le \frac{2}{3}(1-\frac{1}{2m_j})c_1\,,\;\;
       c_2\ge \frac{1}{2}(1-\frac{1}{m_j})c_1$
    & {\tiny $E_4\le E_5\le E_2\le E_3^{\prime}\le E_1\le E_3$}
                                  \rule{0em}{1.6em} \\[1.4ex] \hline
   Region$_{\,(2)}$
    & $1\le c_2\le \frac{1}{2}(1-\frac{1}{m_j})c_1$
    & {\tiny $E_4\le E_5\le E_2\le E_3^{\prime}\le E_3\le E_1$}
                                  \rule{0em}{1.6em} \\[1.4ex] \hline
   Region$_{\,(3)_1}$
    & $\frac{2}{3}(1-\frac{1}{2m_j}) \le c_2\le 1\,,\;\;
       c_2\le \frac{1}{2}(1-\frac{1}{m_j})c_1$
    & {\tiny $E_4\le E_2\le E_5\le E_3^{\prime}\le E_3\le E_1$}
                                  \rule{0em}{1.6em} \\[1.4ex] \hline
   Region$_{\,(3)_2}$
    & $\frac{1}{2}(1-\frac{1}{m_j})
        \le c_2\le \frac{2}{3}(1-\frac{1}{2m_j})\,,\;\;
       c_2\le \frac{1}{2}(1-\frac{1}{m_j})c_1$
    & {\tiny $E_4\le E_2\le E_3^{\prime}\le E_5\le E_3\le E_1$}
                                  \rule{0em}{1.6em} \\[1.4ex] \hline
   Region$_{\,(4)_1}$
    & $c_1\ge 1\,,\;\;
       \frac{(m_j-1)(2m_j-1)}{m_j(5m_j-1)}
        \le c_2 \le \frac{1}{2}(1-\frac{1}{m_j})$
    & {\tiny $E_2\le E_4\le E_3^{\prime}\le E_3\le E_5\le E_1$}
                                  \rule{0em}{1.6em} \\[1.4ex] \hline
   Region$_{\,(4)_2}$
    & $c_1\ge 1\,,\;\;
       \frac{(m_j-1)^2}{m_j(3m_j-1)}
        \le c_2 \le \frac{(m_j-1)(2m_j-1)}{m_j(5m_j-1)}$
    & {\tiny $E_2\le E_3^{\prime}\le E_4\le E_3\le E_5\le E_1$}
                                  \rule{0em}{1.6em} \\[1.4ex] \hline
   Region$_{\,(5)}$
    & $c_1\ge 1\,,\;\,
       0< c_2\le  \frac{(m_j-1)^2}{m_j(3m_j-1)}$
    & {\tiny $E_2\le E_3^{\prime}\le E_3\le E_4\le E_5\le E_1$}
                                  \rule{0em}{1.6em} \\[1.4ex] \hline
   Region$_{\,(6)_1}$
    & $c_1\ge 1\,,\;\;
       \frac{2}{3}(1-\frac{1}{2m_j})c_1 \le c_2 \le 1$
    & {\tiny $E_4\le E_2\le E_5\le E_1\le E_3^{\prime}\le E_3$}
                                  \rule{0em}{1.6em} \\[1.4ex] \hline
   Region$_{\,(6)_2}$
    & $\frac{2}{3}(1-\frac{1}{2m_j})
        \le c_2 \le \frac{2}{3}(1-\frac{1}{2m_j})c_1\,,\;\;
       \frac{1}{2}(1-\frac{1}{m_j})c_1 \le c_2 \le 1$
    & {\tiny $E_4\le E_2\le E_5\le E_3^{\prime}\le E_1\le E_3$}
                                  \rule{0em}{1.6em} \\[1.4ex] \hline
   Region$_{\,(6)_3}$
    & $c_1\ge 1\,,\;\;
       \frac{1}{2}(1-\frac{1}{m_j})c_1
         \le c_2 \le \frac{2}{3}(1-\frac{1}{2m_j})$
    & {\tiny $E_4\le E_2\le E_3^{\prime}\le E_5\le E_1\le E_3$}
                                  \rule{0em}{1.6em} \\[1.4ex] \hline
   Region$_{\,(7)_1}$
    & $c_1\le 1\,,\;\;
       \frac{2}{3}(1-\frac{1}{2m_j})\le c_2\, (<c_1)$
    & {\tiny $E_4\le E_2\le E_1\le E_5\le E_3^{\prime}\le E_3$}
                                  \rule{0em}{1.6em} \\[1.4ex] \hline
   Region$_{\,(7)_2}$
    & $\frac{2}{3}(1-\frac{1}{2m_j})c_1
        \le c_2 \le \frac{2}{3}(1-\frac{1}{2m_j})\,,\;\;
       \frac{1}{2}(1-\frac{1}{m_j})\le c_2\, (<c_1)$
    & {\tiny $E_4\le E_2\le E_1\le E_3^{\prime}\le E_5\le E_3$}
                                  \rule{0em}{1.6em} \\[1.4ex] \hline
   Region$_{\,(7)_3}$
    & $c_1\le 1\,,\;\;
       \frac{1}{2}(1-\frac{1}{m_j})
         \le c_2 \le \frac{2}{3}(1-\frac{1}{2m_j})c_1$
    & {\tiny $E_4\le E_2\le E_3^{\prime}\le E_1\le E_5\le E_3$}
                                  \rule{0em}{1.6em} \\[1.4ex] \hline
   Region$_{\,(8)_1}$
    & $(1-\frac{1}{m_j})-c_1
        \le c_2\le \frac{1}{2}(1-\frac{1}{m_j})\,,\;\;
       c_2\ge \frac{2}{3}(1-\frac{1}{2m_j})c_1$
    & {\tiny $E_2\le E_4\le E_1\le E_3^{\prime}\le E_3\le E_5$}
                                  \rule{0em}{1.6em} \\[1.4ex] \hline
   Region$_{\,(8)_2}$
    & $\frac{(m_j-1)(2m_j-1)}{m_j(5m_j-1)}
        \le c_2 \le \frac{2}{3}(1-\frac{1}{2m_j})c_1\,,\;\;
       \frac{1}{2}(1-\frac{1}{m_j})c_1
        \le c_2 \le \frac{1}{2}(1-\frac{1}{m_j}) $
    & {\tiny $E_2\le E_4\le E_3^{\prime}\le E_1\le E_3\le E_5$}
                                  \rule{0em}{1.6em} \\[1.4ex] \hline
   Region$_{\,(8)_3}$
    & $(1-\frac{1}{m_j})-c_1
        \le c_2\le \frac{(m_j-1)(2m_j-1)}{m_j(5m_j-1)}\,,\;\;
       c_2\ge \frac{1}{2}(1-\frac{1}{m_j})c_1$
    & {\tiny $E_2\le E_3^{\prime}\le E_4\le E_1\le E_3\le E_5$}
                                  \rule{0em}{1.6em} \\[1.4ex] \hline
   Region$_{\,(9)_1}$
    & $c_1\le 1\,,\;\;
       \frac{(m_j-1)(2m_j-1)}{m_j(5m_j-1)}
        \le c_2 \le \frac{1}{2}(1-\frac{1}{m_j})c_1$
    & {\tiny $E_2\le E_4\le E_3^{\prime}\le E_3\le E_1\le E_5$}
                                  \rule{0em}{1.6em} \\[1.4ex] \hline
   Region$_{\,(9)_2}$
    & $c_1\le 1\,,\;\;
       \frac{(m_j-1)^2}{m_j(3m_j-1)}
        \le c_2 \le \frac{1}{2}(1-\frac{1}{m_j})c_1\,,\;\;
       c_2\le \frac{(m_j-1)(2m_j-1)}{m_j(5m_j-1)}$
    & {\tiny $E_2\le E_3^{\prime}\le E_4\le E_3\le E_1\le E_5$}
                                  \rule{0em}{1.6em} \\[1.4ex] \hline
   Region$_{\,(10)}$
    & $c_1\le 1\,,\;\;
       (0<)\,c_2\le \frac{(m_j-1)^2}{m_j(3m_j-1)}\,,\;\;
       c_2\ge (1-\frac{1}{m_j})-c_1$
    & {\tiny $E_2\le E_3^{\prime}\le E_3\le E_4\le E_1\le E_5$}
                                  \rule{0em}{1.6em} \\[1.4ex] \hline
   Region$_{\,(11)_1}$
    & $\frac{(m_j-1)(2m_j-1)}{m_j(5m_j-1)}\le c_2\,(< c_1)\,,\;\;
       c_2\le (1-\frac{1}{m_j})-c_1$
    & {\tiny $E_2\le E_1\le E_4\le E_3^{\prime}\le E_3\le E_5$}
                                  \rule{0em}{1.6em} \\[1.4ex] \hline
   Region$_{\,(11)_2}$
    & $\frac{(m_j-1)^2}{m_j(3m_j-1)}\le c_2\,(< c_1)\,,\;\;
       \frac{2}{3}(1-\frac{1}{2m_j})c_1
        \le c_2 \le \frac{(m_j-1)(2m_j-1)}{m_j(5m_j-1)}$
    & {\tiny $E_2\le E_1\le E_3^{\prime}\le E_4\le E_3\le E_5$}
                                  \rule{0em}{1.6em} \\[1.4ex] \hline
   Region$_{\,(11)_3}$
    & $\frac{(m_j-1)^2}{m_j(3m_j-1)}
        \le c_2 < \frac{2}{3}(1-\frac{1}{2m_j})\,,\;\;
       c_2\le (1-\frac{1}{m_j})-c_1$
    & {\tiny $E_2\le E_3^{\prime}\le E_1\le E_4\le E_3\le E_5$}
                                  \rule{0em}{1.6em} \\[1.4ex] \hline
   Region$_{\,(12)_1}$
    & $\frac{2}{3}(1-\frac{1}{2m_j})c_1\le c_2\,(<c_1)\,,\;\;
        c_2\le \frac{(m_j-1)^2}{m_j(3m_j-1)}$
    & {\tiny $E_2\le E_1\le E_3^{\prime}\le E_3\le E_4\le E_5$}
                                  \rule{0em}{1.6em} \\[1.4ex] \hline
   Region$_{\,(12)_2}$
    & $\frac{1}{2}(1-\frac{1}{m_j})c_1
        \le c_2 <\frac{2}{3}(1-\frac{1}{2m_j})c_1\,,\;\;
       c_2\le \frac{(m_j-1)^2}{m_j(3m_j-1)}$
    & {\tiny $E_2\le E_3^{\prime}\le E_1\le E_3\le E_4\le E_5$}
                                  \rule{0em}{1.6em} \\[1.4ex] \hline
   Region$_{\,(13)}$
    & $(0<)\,c_2\le \frac{1}{2}(1-\frac{1}{m_j})c_1\,,\;\;
       c_2\le (1-\frac{1}{m_j})-c_1$
    & {\tiny $E_2\le E_3^{\prime}\le E_3\le E_1\le E_4\le E_5$}
                                  \rule{0em}{1.6em} \\[1.4ex] \hline
 \end{tabular}
 } 

 \bigskip
 \vspace{3em}

 \noindent
 Similar computations as those in Part~(b.3) gives then
  the following bounds for $\|d\varepsilon^t\|_{L^6}$ on $Q_j^t$:

 \bigskip

 \centerline{\small\it
 \begin{tabular}{|l|c|} \hline
  $(c_1,c_2)$    &   $\|d\varepsilon^t\|_{L^6}\;\le\;\bullet$
                            \rule{0em}{1.2em} \\[.6ex] \hline\hline
  Region$_{\,(1)_1}$
   & $O(\,t^{\frac{4}{3}(1-\frac{1}{m_j})-\frac{10}{3}c_2}\,)$
                              \rule{0em}{2em} \\[.6ex] \hline
  Region$_{\,(1)_2}$
   & $O(\,t^{\frac{4}{3}(1-\frac{1}{m_j})
            +(\frac{1}{m_j}-\frac{11}{6})c_1-\frac{1}{2}c_2}\,)\,
      +\, O(\,t^{\frac{4}{3}(1-\frac{1}{m_j})-\frac{10}{3}c_2}\,)$
                              \rule{0em}{2em} \\[.6ex] \hline
  Region$_{\,(2)}$
   & $O(\,t^{\frac{4}{3}(1-\frac{1}{m_j})
             +(\frac{7}{6m_j}-2)c_1-\frac{1}{6}c_2}\,)\,$
                              \rule{0em}{2em} \\[.6ex]
   & $+\, O(\,t^{\frac{4}{3}(1-\frac{1}{m_j})
                 -(\frac{25}{6}+\frac{5}{3(m_j-1)})c_2}\,)\,
      +\, O(\,t^{\frac{4}{3}(1-\frac{1}{m_j})-\frac{10}{3}c_2}\,)$
                              \rule{0em}{1.2em} \\[.6ex] \hline
  Region$_{\,(3)_1}$
   & $O(\,t^{\frac{4}{3}(1-\frac{1}{m_j})+(\frac{7}{6m_j}-2)c_1
             -\frac{1}{6}c_2}\,)\,
      +\, O(\,t^{-\frac{25}{6}-\frac{5}{3(m_j-1)}}\,)\,$
                              \rule{0em}{2em} \\[.6ex]
   & $+\, O(\,t^{\frac{3}{2}-\frac{4}{3m_j}-\frac{7}{2}c_2}\,)\,
      +\, O(\,t^{\frac{4}{3}(1-\frac{1}{m_j})-\frac{10}{3}c_2}\,)$
                              \rule{0em}{1.2em} \\[.6ex] \hline
  Region$_{\,(3)_2}$
   & $O(\,t^{\frac{4}{3}(1-\frac{1}{m_j})
             +(\frac{1}{m_j}-\frac{11}{6})c_1-\frac{1}{6}c_2}\,)\,
      +\, O(\,t^{\frac{4}{3}(1-\frac{1}{m_j})
                 -(\frac{25}{6}+\frac{5}{3(m_j-1)})c_2}\,)\,$
                              \rule{0em}{2em} \\[.6ex]
   & $+\, O(\,t^{-\frac{1}{2}-\frac{1}{3m_j}-\frac{1}{2}c_2}\,)\,
      +\, O(\,t^{\frac{4}{3}(1-\frac{1}{m_j})-\frac{23}{6}c_2}\,)$
                              \rule{0em}{1.2em} \\[.6ex] \hline
  Region$_{\,(4)_1}$
   & $O(\,t^{\frac{4}{3}(1-\frac{1}{m_j})+(\frac{7}{6m_j}-2)c_1
             -\frac{1}{6}c_2}\,)\,
      +\, O(\,t^{-\frac{2}{3}-\frac{1}{6m_j}-\frac{1}{6}c_2}\,)\,$
                              \rule{0em}{2em} \\[.6ex]
   & $+\, O(\,t^{\frac{7}{6}(1-\frac{1}{m_j})
                 -(\frac{23}{6}+\frac{5}{3(m_j-1)})c_2}\,)\,
      +\, O(\,t^{\frac{4}{3}(1-\frac{1}{m_j})-\frac{10}{3}c_2}\,)\,
      +\, O(\,t^{1-\frac{1}{m_j}-\frac{8}{3}c_2}\,)$
                              \rule{0em}{1.2em} \\[.6ex] \hline
  Region$_{\,(4)_2}$
   & $O(\,t^{\frac{4}{3}(1-\frac{1}{m_j})+(\frac{7}{6m_j}-2)c_1
             -\frac{1}{6}c_2}\,)\,
      +\, O(\,t^{-\frac{2}{3}-\frac{1}{6m_j}-\frac{1}{6}c_2}\,)\,$
                              \rule{0em}{2em} \\[.6ex]
   & $+\, O(\,t^{\frac{7}{6}(1-\frac{1}{m_j})
                 -(\frac{23}{6}+\frac{5}{3(m_j-1)})c_2}\,)\,
      +\, O(\,t^{(1-\frac{1}{m_j})(\frac{1}{m_j}-\frac{2}{3})
                 -(\frac{1}{m_j}-\frac{5}{3})c_2}\,)\,
      +\, O(\,t^{1-\frac{1}{m_j}-\frac{8}{3}c_2}\,)$
                              \rule{0em}{1.2em} \\[.6ex] \hline
  Region$_{\,(5)}$
   & $O(\,t^{\frac{4}{3}(1-\frac{1}{m_j})+(\frac{7}{6m_j}-2)c_1
             -\frac{1}{6}c_2}\,)\,
      +\, O(\,t^{-\frac{2}{3}-\frac{1}{6m_j}-\frac{1}{6}c_2}\,)\,$
                              \rule{0em}{2em} \\[.6ex]
   & $+\, O(\,t^{(1-\frac{1}{m_j})(\frac{7}{6m_j}-\frac{2}{3})
                 -(\frac{7}{6m_j}-\frac{11}{6})c_2}\,)\,
      +\, O(\,t^{(1-\frac{1}{m_j})-\frac{2(5m_j-3)}{3(m_j-1)}}\,)\,
      +\, O(\,t^{1-\frac{1}{m_j}-\frac{8}{3}c_2}\,)$
                              \rule{0em}{1.2em} \\[.6ex] \hline
  Region$_{\,(6)_1}$
   & $O(\,t^{\frac{3}{2}-\frac{4}{3m_j}-\frac{7}{2}c_2}\,)\,
      +\, O(\,t^{\frac{4}{3}(1-\frac{1}{m_j})-\frac{10}{3}c_2}\,)$
                              \rule{0em}{2em} \\[.6ex] \hline
  Region$_{\,(6)_2}$
   & $O(\,t^{\frac{4}{3}(1-\frac{1}{m_j})
             +(\frac{1}{m_j}-\frac{11}{6})c_1-\frac{1}{2}c_2}\,)\,
      +\, O(\,t^{\frac{3}{2}-\frac{4}{3m_j}-\frac{7}{2}c_2}\,)\,
      +\, O(\,t^{\frac{4}{3}(1-\frac{1}{m_j})-\frac{10}{3}c_2}\,)$
                              \rule{0em}{2em} \\[.6ex] \hline
  Region$_{\,(6)_3}$
   & $O(\,t^{\frac{4}{3}(1-\frac{1}{m_j})
             +(\frac{1}{m_j}-\frac{11}{6})c_1-\frac{1}{2}c_2}\,)\,
      +\, O(\,t^{-\frac{1}{2}-\frac{1}{3m_j}-\frac{1}{2}c_2}\,)\,
      +\, O(\,t^{\frac{4}{3}(1-\frac{1}{m_j})-\frac{10}{3}c_2}\,)$
                              \rule{0em}{2em} \\[.6ex] \hline
  Region$_{\,(7)_1}$
   & $O(\,t^{\frac{4}{3}(1-\frac{1}{m_j})-\frac{10}{3}c_2}\,)$
                              \rule{0em}{2em} \\[.6ex] \hline
  Region$_{\,(7)_2}$
   & $O(\,t^{\frac{4}{3}(1-\frac{1}{m_j})-\frac{10}{3}c_2}\,)$
                              \rule{0em}{2em} \\[.6ex] \hline
  Region$_{\,(7)_3}$
   & $O(\,t^{\frac{4}{3}(1-\frac{1}{m_j})
             +(\frac{1}{m_j}-\frac{11}{6})c_1-\frac{1}{2}c_2}\,)\,
      +\, O(\,t^{\frac{4}{3}(1-\frac{1}{m_j})-\frac{10}{3}c_2}\,)$
                              \rule{0em}{2em} \\[.6ex] \hline
  Region$_{\,(8)_1}$
   & $O(\,t^{\frac{4}{3}(1-\frac{1}{m_j})-\frac{7}{2}c_2}\,)\,
      +\, O(\,t^{1-\frac{1}{m_j}-\frac{8}{3}c_2}\,)$
                              \rule{0em}{2em} \\[.6ex] \hline
  Region$_{\,(8)_2}$
   & $O(\,t^{\frac{7}{6}(1-\frac{1}{m_j})
             +\frac{9-35m_j}{6(2m_j-1)}c_2}\,)\,
      +\, O(\,t^{\frac{4}{3}(1-\frac{1}{m_j})-\frac{10}{3}c_2}\,)
      +\, O(\,t^{1-\frac{1}{m_j}-\frac{8}{3}c_2}\,)$
                              \rule{0em}{2em} \\[.6ex] \hline
  Region$_{\,(8)_3}$
   & $O(\,t^{\frac{7}{6}(1-\frac{1}{m_j})
             +\frac{9-35m_j}{6(2m_j-1)}c_2}\,)\,
      +\, O(\,t^{(1-\frac{1}{m_j})(\frac{1}{m_j}-\frac{2}{3})
                 +(\frac{1}{m_j}-\frac{5}{3})c_2}\,)\,
      +\, O(\,t^{1-\frac{1}{m_j}-\frac{8}{3}c_2}\,)$
                              \rule{0em}{2em} \\[.6ex] \hline
    \end{tabular}
      }  

    \centerline{\small\it
    \begin{tabular}{|l|c|} \hline
  Region$_{\,(9)_1}$
   & $O(\,t^{\frac{7}{6}(1-\frac{1}{m_j})
             +(\frac{1}{m_j}-\frac{11}{6})c_1-\frac{1}{6}c_2}\,)\,
      +\, O(\,t^{\frac{7}{6}(1-\frac{1}{m_j})
                 -(\frac{23}{6}+\frac{5}{3(m_j-1)})c_2}\,)\,$
                              \rule{0em}{2.2em} \\[.6ex]
   & $+\, O(\,t^{\frac{4}{3}(1-\frac{1}{m_j})-\frac{10}{3}c_2}\,)\,
      +\, O(\,t^{1-\frac{1}{m_j}-\frac{8}{3}c_2}\,)$
                              \rule{0em}{1.2em} \\[.6ex] \hline
  Region$_{\,(9)_2}$
   & $O(\,t^{\frac{7}{6}(1-\frac{1}{m_j})
             +(\frac{1}{m_j}-\frac{11}{6})c_1-\frac{1}{6}c_2}\,)\,
      +\, O(\,t^{\frac{7}{6}(1-\frac{1}{m_j})
                 -(\frac{23}{6}+\frac{5}{3(m_j-1)})c_2}\,)\,$
                              \rule{0em}{2em} \\[.6ex]
   & $+\, O(\,t^{(1-\frac{1}{m_j})(\frac{1}{m_j}-\frac{2}{3})
                  +(\frac{1}{m_j}-\frac{5}{3})c_2}\,)\,
      +\, O(\,t^{1-\frac{1}{m_j}-\frac{8}{3}c_2}\,)$
                              \rule{0em}{1.2em} \\[.6ex] \hline
  Region$_{\,(10)}$
   & $O(\,t^{\frac{7}{6}(1-\frac{1}{m_j})
             +(\frac{1}{m_j}-\frac{11}{6})c_1-\frac{1}{6}c_2}\,)\,
      +\, O(\,t^{(1-\frac{1}{m_j})(\frac{1}{m_j}-\frac{2}{3})
                 +(\frac{1}{m_j}-\frac{5}{3})c_2}\,)\,$
                              \rule{0em}{2em} \\[.6ex]
   & $+\, O(\,t^{(1-\frac{1}{m_j})-\frac{2(5m_j-3)}{3(m_j-1)c_2}}\,)\,
      +\, O(\,t^{1-\frac{1}{m_j}-\frac{8}{3}c_2}\,)$
                              \rule{0em}{1.2em} \\[.6ex] \hline
  Region$_{\,(11)_1}$
   & $O(\,t^{1-\frac{1}{m_j}-\frac{8}{3}c_2}\,)$
                              \rule{0em}{2em} \\[.6ex] \hline
  Region$_{\,(11)_2}$
   & $O(\,t^{1-\frac{1}{m_j}-\frac{8}{3}c_2}\,)$
                              \rule{0em}{2em} \\[.6ex] \hline
  Region$_{\,(11)_3}$
   & $O(\,t^{(1-\frac{1}{m_j})+(\frac{1}{m_j}-\frac{5}{3})c_1}\,)\,
      +\,O(\,t^{1-\frac{1}{m_j}-\frac{8}{3}c_2}\,)$
                              \rule{0em}{2em} \\[.6ex] \hline
  Region$_{\,(12)_1}$
   & $O(\,t^{1-\frac{1}{m_j}-\frac{8}{3}c_2}\,)$
                              \rule{0em}{2em} \\[.6ex] \hline
  Region$_{\,(12)_2}$
   & $O(t^{(1-\frac{1}{m_j})+(\frac{1}{m_j}-\frac{5}{3})c_1})\,
      +\, O(\,t^{1-\frac{1}{m_j}-\frac{8}{3}c_2}\,)$
                              \rule{0em}{2em} \\[.6ex] \hline
  Region$_{\,(13)}$
   & $O(\,t^{(1-\frac{1}{m_j})+(\frac{1}{m_j}-\frac{5}{3})c_1}\,)\,
      +\, O(\,t^{(1-\frac{1}{m_j})-\frac{2(5m_j-3)}{3(m_j-1)}}\,)\,
      +\, O(\,t^{1-\frac{1}{m_j}-\frac{8}{3}c_2}\,)$
                              \rule{0em}{2em} \\[.6ex] \hline
 \end{tabular}
 } 

 %

 \bigskip

 \noindent
 $(c)$ {\it Overall estimates on $X$.} \hspace{1em}
  Adding the bound on $P_j^t$ and the bound on $Q_j^t$,
   $j=1,\,\ldots\,,\,\tilde{n}_0$ gives a bound of Sobolev norms
   on $X$.
  The proposition follows from the explicit expressions
   in Part (a) and Part (b).

\end{proof}

\begin{remark}
{\it $[$further refinement$]$.} {\rm
 Once having the explicit exponents in the $t$-powers,
  one can further refine each region in the $(c_1,c_2)$-plane
   in the proof
  so that in the end there is only one dominating $t$-power
   in each final region.
 As the details are straightforward but very tedious and
  there are issues one cannot bypass due to topological reasons
   (cf.~Sec.~4),
  we omit the discussion of such further refinement.
}\end{remark}

%
%
%
%
%
%
%
%
%
%

\bigskip

\subsection{Lagrangian neighborhoods and bounds on
            $R(g^t)$, $\delta(g^t)$.}

Recall
 Definition~3.1.4,
 the Lagrangian neighborhood $\Phi_{Z_0}:U_{Z_0}\rightarrow Y$
   of $Z_0$ in $Y$, and
 the symplectic covering map
 $f^{\diamond}_!:
   T^{\ast}X^{\diamond}\rightarrow T^{\ast}Z_0^{\diamond}$ in Sec.~3.1.
In this subsection, we address
 how tractable it is
  to construct an immersed Lagrangian neighborhood for $f^t$
  by gluing an admissible Lagrangian neighborhood
   $\Phi_{\nu_X(\tilde{\Gamma}_j)}^{j,a_j,t}:
         U_{\nu_X(\tilde{\Gamma}_j)}^{a_j,t} \rightarrow Y^{\prime,j}$
   of $L^{\prime, a_jt^{m_j-1}}$ under $\psi^{j,a_jt^{m_j-1}}$,
   constructed in Proposition~2.4.1,
   to $\Phi_{Z_0}\circ f^{\diamond}_!$
    on a neighborhood of the zero-section of $T^{\ast}X^{\diamond}$
 so that the immersed Lagrangian neighborhood of $f^t$
  can fit into Joyce's criteria (iii), (iv), and (v).

\begin{definition-lemma}
{\bf [admissible immersed Lagrangian neighborhood for $f^{t,\diamond}\,$].}
{\rm
 Let
  $f^{t,\diamond}:= f^t|_{X^{\diamond}}$.
 Define
  $f^{\diamond,+,t}_!:
     T^{\ast}X^{\diamond}\rightarrow T^{\ast}Z_0^{\diamond}$
  to be the symplectic covering map
   $f^{\diamond,+,t}_!(\,\cdot\,)= f^{\diamond}_!(\:\cdot\,+dh^t)$
   and
 recall that
  $\Phi_{Z_0}\circ f^{\diamond,+,t}|_{\,\mbox{zero-section}}
   = f^{t,\diamond}$
  from Definition~3.1.4.
 {\it Assume that $b_2^t=Ct^{1-\eta}$
             for some constants $C>0$ and $0<\eta<1$.}
 {\it Then
  there exists an $\epsilon_0>0$ such that
   for $\delta>0$ small enough and $t\in (0,\delta)$,
   the restriction $\Phi_{U_{X^{\diamond}}^t}$ of
    $f^{\diamond,+,t}_!$ to the following neighborhood
    $$
     U_{X^{\diamond}}^t\; :=\;
      \{(x,v)\in T^{\ast}X^{\diamond}\,:\,|v|_{g^t}<t\,\epsilon_0\}
    $$
   of the zero-section of $T^{\ast}X^{\diamond}$
   is a symplectic immersion into $U_{Z_0}$}.
}\end{definition-lemma}

\begin{proof}
 This follows from
  Proposition~2.4.4,
  the quasi-isometry nature of
   $\coprod_j\Upsilon^j:Y^{\prime,j}\rightarrow Y$, and
  the following estimates of
   $1\left/
     \sqrt{\left( 1\,+\,
      \left(
       \frac{d}{dr}
        \left( a^{1/m_j}\,t^{(m_j-1)/m_j}\,
            \left[\frac{m_j}{m_j+1}\,\dot{\chi}^t(r)\,
                  +\,\chi^t(r)\,r^{1/m_j}\right] \right)
      \right)^2 \right)} \right.$
   over $tR^{\prime}_0\le r\le b_2^t$, cf.\ Remark~3.1.6.

 First, note that
  \begin{eqnarray*}
   \lefteqn{\left|
    \frac{d}{dr}
      \left(
       a^{1/m_j}\,t^{(m_j-1)/m_j}\,
         \left[\frac{m_j}{m_j+1}\,\dot{\chi}^t(r)\,
              +\,\chi^t(r)\,r^{1/m_j}\right]
                \right) \right|}                                \\[1.2ex]
    && =\;
     a^{1/m_j}\,t^{(m_j-1)/m_j}\,
     \left|
      \frac{m_j}{m_j+1}\,\ddot{\chi}^t(r)\,
      +\, \dot{\chi}^t(r)\,r^{1/m_j}\,
      +\, \frac{1}{m_j}\,\chi^t(r)\,r^{(1-m_j)/m_j}  \right|    \\[1.2ex]
    && \le\;
     a^{1/m_j}\,t^{(m_j-1)/m_j}\,
     \left(\frac{m_j}{m_j+1}\cdot\frac{C_0}{(b_2^t)^2}\,
           +\, \frac{C_0}{b_2^t}\,r^{1/m_j}\,
           +\, \frac{1}{m_j}\,r^{(1-m_j)/m_j} \right)           \\[1.2ex]
    && \le\;
     O(t^{(m_j-1)/m_j})\,(b_2^t)^{-2}\,
     +\, O(t^{(m_j-1)/m_j})\,(b_2^t)^{(1-m_j)/m_j}\,
     +\, O(t^{(m_j-1)/m_j})\,(tR^{\prime}_0)^{(1-m_j)/m_j}      \\[1.2ex]
    && =\;
     O(t^{(m_j-1)/m_j})\,(b_2^t)^{-2}\,
     +\, O(t^{(m_j-1)/m_j})\,(b_2^t)^{(1-m_j)/m_j}\, +\, O(1)\,.
  \end{eqnarray*}
 Here, we have used the fact that
  $r^{1/m_j}$ (resp.\ $r^{(1-m_j)/m_j}$) is an increasing
   (resp.\ decreasing) function over $r>0$ for all $j$.
 Suppose that $b_2^t=O(t^{1-\eta})$ with $0<\eta<1$.
 Then,
  \begin{eqnarray*}
   \lefteqn{O(t^{(m_j-1)/m_j})\,(b_2^t)^{-2}\,
    +\, O(t^{(m_j-1)/m_j})\,(b_2^t)^{(1-m_j)/m_j}\, +\, O(1) } \\[1.2ex]
   && =\;
    O(t^{-2(1-\eta)})\, +\, O(t^{(1-\frac{1}{m_j})\eta})\, +\, O(1)\;\;
    \le\;\; O(t^{-2})\,.
  \end{eqnarray*}
 Consequently,
  $$
   1\left/
    \sqrt{\left( 1\,+\,
      \left(
       \frac{d}{dr}
        \left( a^{1/m_j}\,t^{(m_j-1)/m_j}\,
            \left[\frac{m_j}{m_j+1}\,\dot{\chi}^t(r)\,
                  +\,\chi^t(r)\,r^{1/m_j}\right] \right)
      \right)^2 \right)} \right.\;\;\;
   \ge\;\;\; O(t)\,.
  $$
 This implies that there exists an $\epsilon_0>0$ such that
  the following disk-bundle of constant radius
   $$
    \{(x,v)\in T^{\ast}X^{\diamond}\,:\,
     \distance_{g^t}(x,\tilde{\Gamma})>tR^{\prime}_0\,,\,
     |v|_{g^t}<\epsilon_0\,t\,\}
   $$
  is immersed into $U_{Z_0}$ by $f^{\diamond,+,t}_!$.
\end{proof}

\begin{definition}
{\bf [immersed Lagrangian neighborhood for $f^t$ via gluing].} {\rm
 Continuing the assumption of $b_2^t$ in Definition/Lemma~3.3.1.
 Recall the constructions and notations in Sec.~2.4 and Sec.~3.1.
 Let $R^{\prime}_0>0$ such that $0<tR^{\prime}_0<b_1^t$.
 Performing the construction of Sec.~2.4 to $\psi^{j,a}$ for each $j$
  gives an admissible Lagrangian neighborhood
  $\Phi_{\nu_X(\tilde{\Gamma}_j)}^{j,a_j,t}:
        U_{\nu_X(\tilde{\Gamma}_j)}^{a_j,t} \rightarrow Y^{\prime,j}$
  for $L^{\prime, a_jt^{m_j-1}}\subset Y^{\prime,j}$
  under $\psi^{j,a_jt^{m_j-1}}$.
 For $0<r<R_0$, denote
  $$
   \begin{array}{rcl}
    \nu_{T^{\ast}X}(\tilde{\Gamma})_r
     & :=       & \coprod_j\{(x_1,x_2,x_3,p_{x_1}, p_{x_2},p_{x_3})
                        \in U_{\nu_X(\tilde{\Gamma}_j)}^{a_j,t}\,:\,
                         |(x_1+\sqrt{-1}x_2)^{m_j}|<r\}  \\[1.2ex]
     & \subset  &\coprod_jT^{\ast}\nu_X(\tilde{\Gamma}_j)\,.
   \end{array}
  $$
 Then
  $\Phi_{\nu_X(\tilde{\Gamma}_j)}^{j,a_j,t}$ and
   $\Phi_{U_{X^{\diamond}}^t}$ coincide over
   $$
    \begin{array}{l}
     \nu_{T^{\ast}X}(\tilde{\Gamma})_{b_1^t}
     \cap (U_{X^{\diamond}}^t\,-\,
           \overline{\nu_{T^{\ast}X}(\tilde{\Gamma})_{tR^{\prime}_0}})
                                                         \\[1.2ex]
     \hspace{2em}
      =\;\coprod_j\{(x_1,x_2,x_3,p_{x_1}, p_{x_2},p_{x_3})
                    \in U_{\nu_X(\tilde{\Gamma}_j)}^{a_j,t}\,:\,
                tR^{\prime}_0<|(x_1+\sqrt{-1}x_2)^{m_j}|<b_1^t\}
    \end{array}
   $$
  under the built-in inclusions
   $U_{\nu_X(\tilde{\Gamma}_j)}^{a_j,t}\subset T^{\ast}X$ and
   $Y^{\prime,j}\subset T^{\ast}Z_0$.
 It follows that if one takes the following neighborhood
  of the zero-section of $T^{\ast}X$:
  $$
   U_X^t\;=\;
     \nu_{T^{\ast}X}(\tilde{\Gamma})_{b_1^t}
     \cup (U_{X^{\diamond}}^t\,-\,
           \overline{\nu_{T^{\ast}X}(\tilde{\Gamma})_{tR^{\prime}_0}})
   \hspace{2em}\mbox{in $\;T^{\ast}X$}\,,
  $$
 then
  the restrictions
   $\left.\Phi_{\nu_X(\tilde{\Gamma}_j)}^{j,a_j,t}\right|
      _{ \nu_{T^{\ast}X}(\tilde{\Gamma})_{b_1^t}}$ and
   $\left.\Phi_{U_{X^{\diamond}}^t}\right|
      _{U_{X^{\diamond}}^t\,-\,
        \overline{\nu_{T^{\ast}X}(\tilde{\Gamma})_{tR^{\prime}_0}}}\,$
   glue to a symplectic immersion
  $$
   \Phi_X^t\,:\, U_X^t\;\longrightarrow\; Y
  $$
  whose restriction to the zero-section is $f^t$.
 $\Phi_X^t$ defines thus an immersed Lagrangian neighborhood
  for the immersed Lagrangian submanifold $f^t:X\rightarrow Y$.
}\end{definition}

\begin{theorem}
{\bf [bound for size, injective radius, and curvature].}
 Recall
  Theorem~1.3 in Sec.~1.
 Making $\delta$ smaller if necessary,
 there exist $A_1,\, A_4,\, A_5,\, A_6>0$ such that
  the following hold for all $t\in (0,\delta)$:
  \begin{itemize}
   \item[{\rm (iii)}]
    The subset ${\cal B}_{A_1t}\subset T^{\ast}N^t$ of Definition~1.2
     lies in $U_{N^t}$,  and
    $\|\hat{\nabla}^k\beta^t\|_{C^0}\le A_4t^{-k}$ on ${\cal B}_{A_1t}$
     for $k=0,\,1,\,2,\,3$.

   \item[{\rm (iv)}]
    The injectivity radius $\delta(g^t)$ satisfies
     $\delta(h^t)\ge A_5t$.

   \item[{\rm (v)}]
    The Riemann curvature $R(g^t)$ satisfies
     $\|R(g^t)\|_{C^0}\le A_6t^{-2}$.
  \end{itemize}
\end{theorem}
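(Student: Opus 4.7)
The plan is to verify (iii), (iv), (v) by decomposing $N^t = X$ into the three pieces $P_j^t$, $Q_j^t$, $K^t$ from Notation~3.1.7 and applying on each piece the appropriate geometric control coming from the two halves of the glued Lagrangian neighborhood in Definition~3.3.2. The key input on $P_j^t$ (and most of $Q_j^t$) is the flat, partially-scaled model from Sec.~2.4, while on $K^t$ one uses that the symplectic immersion $\Phi_{U_{X^\diamond}^t}$ of Definition/Lemma~3.3.1 is essentially independent of $t$ outside a tubular neighborhood of $\tilde\Gamma$ of radius $b_1^t\to 0$.

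First I would address (iii). For the inclusion ${\cal B}_{A_1 t}\subset U_X^t$, I would take $A_1=\min(A_1',\epsilon_0)$, where $A_1'$ is supplied by Proposition~2.4.4 on each $\nu_X(\tilde\Gamma_j)$ and $\epsilon_0$ by Definition/Lemma~3.3.1; both furnish the required linear-in-$t$ fiber radius. For the derivative bounds $\|\hat\nabla^k\beta^t\|_{C^0}\le A_4 t^{-k}$, I would invoke Proposition~2.4.6 directly on $P_j^t\cup Q_j^t$ and transport back to $Y$ via $\Upsilon^j$; since $\Upsilon^j$ agrees with an isometry to first order along $\Gamma_i$, the correction terms are of order $O(r_1)$ and do not worsen the stated orders for $k\le 3$. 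On $K^t$, both $f^t$ and $\Phi_X^t$ coincide with their $t$-independent counterparts for $f^0$, so $\|\hat\nabla^k\beta^t\|_{C^0}$ is uniformly bounded.

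Next I would treat (iv) and (v) jointly, using the observation that on $P_j^t$ the metric $g^t$ is, via $\Upsilon^j$, the pullback of the partially-scaled flat metric $g^{a_j,t}$ on $L$. Proposition~2.4.4 then gives $g^{a_j,t}\ge t^2\, g^{a_j}\cdot(t^{-1/m_j}\cdot\,)_\ast$, so $(P_j^t,g^t)$ is, up to bounded distortion, the $t$-scaling of a fixed geometry; this yields $\delta(g^t|_{P_j^t})\gtrsim t$ and $|R(g^t|_{P_j^t})|\lesssim t^{-2}$ by the standard scaling behavior of the Levi-Civita curvature. On $K^t$, $g^t$ equals the fixed pullback metric $f^\ast g_Y$ up to $C^\infty$-small perturbations as $t\to 0$, so both bounds there are trivial.

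The hard part will be the annular region $Q_j^t$, where the cutoff $\chi^t$ is active and both extra metric terms and cutoff derivatives need to be absorbed into the same $t^{-2}$ envelope. Thanks to the standing assumption $b_2^t=Ct^{1-\eta}$ from Definition/Lemma~3.3.1, the cutoff-derivative bounds $(b_2^t)^{-k}=O(t^{-k(1-\eta)})=o(t^{-k})$, so the gluing terms in the explicit formulas for $ds^2$, $\hat\nabla^k\beta^t$, and their derivatives appearing in the proof of Proposition~3.2.5(b.1) are strictly subdominant to the target bounds. What remains is the intrinsic singular factor $t^{(m_j-1)/m_j}\,r_1^{(1-m_j)/m_j}$ and its first two $r_1$-derivatives, which must be shown to fit inside an $O(t^{-2})$ envelope for $r_1\in[b_1^t,b_2^t]$; a direct computation shows this holds as long as $c_1\le 1$, consistently with the choices already made in Sec.~3.2. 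The injectivity-radius estimate on $Q_j^t$ then follows from the quasi-isometric scaled-flat model together with the absence of short closed geodesics in a solid-torus annulus of inner radius of order $t$. Gluing the estimates on $P_j^t\cup Q_j^t$ with those on $K^t$ via the overlap structure in Definition~3.3.2, and absorbing constants by taking $\delta$ small, yields the required $A_1,\,A_4,\,A_5,\,A_6$.
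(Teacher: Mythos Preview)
Your proposal is correct and mirrors the paper's own argument: both invoke Propositions~2.4.4 and~2.4.6 for the inner scaled-model piece, the derivative estimate from the proof of Definition/Lemma~3.3.1 for the transition annulus, and the quasi-isometry of $\Upsilon^j$ to transfer back to $Y$, deferring the remaining details to Joyce [Jo3:~III, Theorem~6.8]. One small organizational correction: by Definition~3.3.2 the glued Lagrangian neighborhood equals the admissible one from Sec.~2.4 only over $\{r_1<b_1^t\}=P_j^t$, so Proposition~2.4.6 applies directly on $P_j^t$ rather than on $P_j^t\cup Q_j^t$; over $Q_j^t$ the neighborhood is $\Phi_{U_{X^\diamond}^t}$, but since you already treat $Q_j^t$ separately via the cutoff-derivative bounds this does not affect the argument.
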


\begin{proof}
 Except the necessary mild changes of details that are akin to
  particular situations,
 the proof of [Jo3: III.\ Theorem~6.8] applies here.
 In summary,
  note that the region in $X$ where $f^t$ and $f$ differ has
  the image contained in $\coprod_jY^{\prime,j}$ for all $t$.
 Outside this region, $f^t$ is independent of $t$ and, hence,
  all the criteria in the theorem are satisfied.
 The theorem thus is a joint consequence of the following items:
  \begin{itemize}
   \item[(1)]
    Proposition~2.4.4,
     which leads to the size lower bound $A_1t$ in Criterion (iii);

   \item[(2)]
    Proposition~2.4.6,
     which, together with Parts (3) and (5),
     leads to the bounds in Criterion (iii);

   \item[(3)]
    the estimate of
     $\left|\frac{d}{dr}
        \left(
            a^{1/m_j}\,t^{(m_j-1)/m_j}\,
             \left[\frac{m_j}{m_j+1}\,\dot{\chi}^t(r)\,
                  +\,\chi^t(r)\,r^{1/m_j}\right]
           \right)\right|$
     over $tR^{\prime}_0\le r\le b_2^t$
     and the conclusion in the proof of Definition/Lemma~3.3.1,
     which is needed to control the behavior of $\hat{\nabla}$, and
        together with Parts (3) and (5), justifies Criteria (iv) and (v);

   \item[(4)]
    the quasi-isometry nature of
     $\coprod_j\Upsilon^j:Y^{\prime,j}\rightarrow Y$,
     which is needed to relate the local study in the flat tangent space
     to that in $Y$.
  \end{itemize}

\end{proof}

\bigskip

\subsection{Sobolev immersion inequalities on $N^t$.}

In this subsection we discuss the (in)validity of the following statement
 in our situation:

\begin{statement}
{\bf [Sobolev immersion inequality].}
{\rm  (Cf.~[Jo3: III.~Theorem~6.12].)}
 Making $\delta>0$ smaller if necessary,
 there exists $A_7>0$ such that for all $t\in (0,\delta)$,
  if $v\in L^2_1(N^t)$ with $\int_{N^t}v\,dV^t=0$
  then $v\in L^6(N^t)$ and $\|v\|_{L^6}\le A_7\|dv\|_{L^2}$.
\end{statement}

\noindent
The investigation (and notations whenever possible)
 follows\footnote{Details
                  that are the same as in [Jo3: III~Sec.~6.4] and
                  not needed in the discussion are referred to ibidem
                  and, hence, omitted.}
 the setting of Joyce in [Jo3: III.\ Sec.~6.4].
See also
  [Bu1: Sec.~4.6, Sec.~5.3] of Adrian Butscher,
  [Lee: Sec.~3] of Yng-Ing Lee , and
  [Sa1: Sec.~4] of Sema Salur
 for related studies.

\bigskip

\begin{flushleft}
{\bf Preparation.}
\end{flushleft}
Some basic Sobolev inequalities are given here.

\begin{theorem}
{\bf [Michael-Simon inequality].}
{\rm ([M-S] and [Jo3: III.~Theorem~6.9].)}
 Assume that $m\ge 3$.
 Let
  $f:S\rightarrow {\Bbb R}^l$
   be an immersed $m$-submanifold of ${\Bbb R}^l$ and
  $u\in C^1_{cs}(S)$.
 Then $\|u\|_{L^{2m/(m-2)}}\le D_1(\|u\|_{L^2}+\|uH\|_{L^2})$,
  where
   $D_1>0$ depends only on $m$, and
   $H$ is the mean curvature vector of $S$ in ${\Bbb R}^l$ along $f$.
\end{theorem}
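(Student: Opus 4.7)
The plan is to follow the classical Michael--Simon strategy: first establish an $L^1$ Sobolev inequality on the immersed submanifold, then bootstrap to the $L^{2m/(m-2)}$ version via a power substitution and H\"older's inequality. Since $H$ enters the statement, the proof must genuinely be done in the extrinsic setting on $f:S\rightarrow{\Bbb R}^l$, not merely on $S$ as an abstract Riemannian manifold.

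First I would prove the $L^1$-type prototype: for $u\in C^1_{cs}(S)$ non-negative,
\[
 \|u\|_{L^{m/(m-1)}}\;\le\; D_0\,\bigl(\|du\|_{L^1}\,+\,\|uH\|_{L^1}\bigr).
\]
The key input is a monotonicity/density estimate for immersed submanifolds with mean curvature: for each $x\in{\Bbb R}^l$, the mass $\mu(x,r):=\int_{f^{-1}(B_r(x))}u\,dV$ satisfies a differential inequality obtained by testing the first variation formula with the radial vector field $(y-x)\,\varphi(|y-x|/r)$ on $f(S)$. Integrating this inequality from $0$ to $r$ and choosing $r$ adaptively at each point where $u$ is large yields a Vitali-type covering of $\{u>t\}$ by balls in ${\Bbb R}^l$ whose $\mu$-masses are controlled by $\int(|du|+u|H|)$. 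Summing the covering inequality over dyadic level sets of $u$ (the Federer--Fleming coarea trick) produces the $L^1$--$L^{m/(m-1)}$ estimate.

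Next I would pass from the $L^1$ inequality to the $L^{2m/(m-2)}$ inequality. Apply the $L^1$ prototype to $w:=|u|^q$ with $q=2(m-1)/(m-2)$, noting $w\in C^1_{cs}(S)$. Then
\[
 \|u\|_{L^{2m/(m-2)}}^{q}\;=\;\|w\|_{L^{m/(m-1)}}\;
 \le\;D_0\,\bigl(q\,\||u|^{q-1}du\|_{L^1}+\||u|^q H\|_{L^1}\bigr),
\]
and H\"older's inequality with exponents chosen from $\tfrac{2m}{m-2}$, $2$, and their conjugates converts the right-hand side into
\[
 \|u\|_{L^{2m/(m-2)}}^{q-1}\,\bigl(\,\|du\|_{L^2}+\|uH\|_{L^2}\,\bigr).
\]
Dividing through by $\|u\|_{L^{2m/(m-2)}}^{q-1}$ gives $\|u\|_{L^{2m/(m-2)}}\le D_1'(\|du\|_{L^2}+\|uH\|_{L^2})$. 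The stated inequality with $\|u\|_{L^2}$ on the right rather than $\|du\|_{L^2}$ follows by noting that if $u\in C^1_{cs}(S)$, one has the trivial comparison $\|du\|_{L^2}\le \|u\|_{L^2}+\text{(gradient bound)}$; alternatively, the version actually needed in the sequel tolerates $\|u\|_{L^2}+\|uH\|_{L^2}$ on the right by applying the gradient inequality to a truncation and using $|\nabla|u||\le|\nabla u|$ a.e.

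The main obstacle is the monotonicity step in the $L^1$ prototype, since $f$ is only immersed and possibly has self-intersections, so one has to track multiplicities and carry out the first variation formula on the image rectifiable varifold $f_*[S]$ rather than on an embedded submanifold. The standard remedy, which I would follow, is to replace the ambient ball $B_r(x)$ by its full preimage under $f$ and treat the first variation as a distributional identity on $S$; then $H$ appears with its geometric sign and magnitude and no embeddedness is used. Once this is set up correctly, the covering and iteration steps are routine.
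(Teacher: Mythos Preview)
The paper gives no proof of this theorem; it is simply quoted from the literature ([M-S] and [Jo3: III, Theorem~6.9]).  Your outline --- the $L^1$ prototype via the first-variation/monotonicity argument, followed by the power substitution $w=|u|^{2(m-1)/(m-2)}$ and H\"older --- is the classical Michael--Simon proof and is correct for the inequality
\[
  \|u\|_{L^{2m/(m-2)}}\;\le\; D_1\bigl(\|du\|_{L^2}+\|uH\|_{L^2}\bigr).
\]

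The only genuine problem is your final paragraph.  The statement as printed carries $\|u\|_{L^2}$ on the right rather than $\|du\|_{L^2}$, and you attempt to justify this.  That is almost certainly a typographical slip in the paper, not something to be proved: the actual Michael--Simon inequality (and Joyce's Theorem~6.9) has the gradient term, and the very next line in the paper, Corollary~3.4.3, deduces $\|u\|_{L^6}\le D_1\|du\|_{L^2}$ on a minimal ($H=0$) submanifold --- which would be nonsense if the theorem really said $\|u\|_{L^6}\le D_1\|u\|_{L^2}$ (scale the height of a fixed bump).  Your proposed passage ``$\|du\|_{L^2}\le\|u\|_{L^2}+\text{(gradient bound)}$'' is not a valid inequality and should be deleted; simply note the typo and prove the gradient version, which is what the paper actually uses downstream.
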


The special Lagrangian submanifold $t\cdot L^{\prime,a}$ is minimal
 (i.e.\ $H$=0) in $Y^{\prime}$. Thus:

\begin{corollary}
{\bf [Sobolev inequality].} {\rm (Cf.\ [Jo3: III.~Corollary~6.10].)}
 There exists $D_1>0$ such that
  $\|u\|_6\le D_1\|du\|_{L^2}$
  for $t>0$ and $u\in C^1_{cs}(t\cdot L^{\prime,a})$.
\end{corollary}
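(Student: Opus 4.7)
The plan is to apply the Michael--Simon inequality (Theorem 3.4.2) directly to $t\cdot L^{\prime,a}$ and exploit the fact that this submanifold is minimal, so that the mean-curvature term on the right-hand side drops out. By Notation 2.2.3 the partial scaling sends $L^{\prime,a}$ to $L^{\prime,at^{m-1}}$, which is again the image of a special Lagrangian embedding $\psi^{at^{m-1}}$ from Sec.~2.2. Hence $t\cdot L^{\prime,a}$ is calibrated by $\Real\Omega^{\prime}$, therefore area-minimizing in $Y^{\prime}$, and its mean-curvature vector $H$ vanishes identically. In particular $\|uH\|_{L^2}=0$ for every $u\in C^1_{cs}(t\cdot L^{\prime,a})$.

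One technical point is that Theorem 3.4.2 is stated for immersions into Euclidean $\Bbb R^l$, whereas our ambient is $Y^{\prime}=\Bbb R^2\times S^1\times\Bbb R^3$. I would handle this by passing to the universal cover $\Bbb R^6\to Y^{\prime}$: the preimage of $t\cdot L^{\prime,a}$ is a complete minimal immersion $t\cdot\widetilde{L}^{\prime,a}\hookrightarrow\Bbb R^6$ on which the deck $\Bbb Z$-action is a discrete group of isometries. A compactly supported $u$ on $t\cdot L^{\prime,a}$ lifts to a $\Bbb Z$-periodic $\tilde u$; truncating $\tilde u$ by a $\Bbb Z$-equivariant window in the unwrapped $S^1$-direction, applying Michael--Simon with $H=0$ on $\Bbb R^6$, and averaging over one fundamental period recovers the inequality on the quotient with an unchanged constant. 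Alternatively, one notes that the Michael--Simon proof in [M-S] depends only on the Euclidean monotonicity formula, and so goes through verbatim for submanifolds of any flat Riemannian manifold such as $Y^{\prime}$.

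With $m=3$ (so $2m/(m-2)=6$) and $H\equiv 0$, Theorem 3.4.2 reduces to $\|u\|_{L^6}\le D_1\|du\|_{L^2}$, which is the claim. The crucial feature is that the Michael--Simon constant $D_1$ depends only on the dimension $m$; in particular it is independent of both the scaling parameter $t$ and the deformation parameter $a$, which is the scale-uniformity one will ultimately need in order to feed this inequality into Criterion~(vi) of Theorem~1.1.3 for the family $N^t$. The only real subtlety is the $S^1$-factor in the ambient, resolved by the lift above; beyond this the corollary is a one-line consequence of the special Lagrangian property, and no substantive obstacle arises.
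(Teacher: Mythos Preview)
Your approach is exactly the paper's: the corollary is stated immediately after the sentence ``The special Lagrangian submanifold $t\cdot L^{\prime,a}$ is minimal (i.e.\ $H=0$) in $Y^{\prime}$. Thus:'' --- so the paper's entire argument is the observation that Michael--Simon with $H\equiv 0$ and $m=3$ yields the claim, with the dimensional constant $D_1$ automatically independent of $t$ and $a$.

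You are in fact more careful than the paper about the ambient $S^1$-factor, which the paper does not mention at all. Your second alternative (that the Michael--Simon argument needs only local Euclidean structure and so holds for minimal submanifolds of any flat ambient) is the clean resolution. Your first alternative, however, has a gap as written: a ``$\Bbb Z$-equivariant'' truncation of the periodic lift $\tilde u$ is still periodic, hence not compactly supported; and if you instead cut off over $N$ fundamental domains and try to pass to the quotient, the $L^6$ and $L^2$ norms scale as $N^{1/6}$ and $N^{1/2}$ respectively, so dividing out does not recover the inequality with an unchanged constant. Either drop that first alternative or replace it by the isometric embedding $S^1\hookrightarrow\Bbb R^2$, which makes $Y^{\prime}$ a submanifold of $\Bbb R^7$ with bounded (not zero) second fundamental form --- Michael--Simon then gives $\|u\|_{L^6}\le D_1(\|du\|_{L^2}+C\|u\|_{L^2})$, and the extra $\|u\|_{L^2}$ term still has to be absorbed, which is again nontrivial. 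Your second alternative avoids all of this.
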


\begin{proposition}
{\bf [Sobolev inequality].} {\rm (Cf.\ [Jo3: III~Proposition~6.11].)}
 There exists $D_2>0$ such that for all $v\in C^1_{cs}(X^{\diamond})$,
  $$
   \|v\|_{L^6}\; \le\;
    D_2 \left(\|dv\|_{L^2}\,
         +\,\left|\int_{X^{\diamond}}v\,dV_g\right| \right)\,.
  $$
\end{proposition}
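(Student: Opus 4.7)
The plan is to reduce the desired inequality on the open manifold $X^{\diamond}$ to a standard Sobolev embedding on the closed smooth Riemannian $3$-manifold $(X,g)$, where $g$ is the smooth background metric on $X$ produced in the proof of Example 2.1.3$(a)$ (so that $g$ and $g^{\prime}:=f^{\ast}g_Y$ are quasi-conformal on $X^{\diamond}$ with uniformly bounded dilatation). Because this quasi-isometry is uniform, it will be immaterial for the inequality whether volumes and norms are computed with $g$ or with $g^{\prime}$; all constants get multiplied by at most a fixed power of the dilatation.

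First, given $v\in C^1_{cs}(X^{\diamond})$, the compactness of its support inside the open set $X^{\diamond}=X-\tilde{\Gamma}$ lets us extend $v$ by zero to a function $\tilde{v}\in C^1(X)$ with $\tilde{v}|_{X^{\diamond}}=v$ and $\int_X \tilde{v}\,dV_g=\int_{X^{\diamond}}v\,dV_g$. Then $m=3$, and an application of Theorem 0.1 (Sobolev embedding, with $k=1$, $l=0$, $q=2$, $r=6$, so that $\tfrac{1}{2}\le \tfrac{1}{6}+\tfrac{1}{3}$) on the closed smooth Riemannian $3$-manifold $(X,g)$ yields
$$
  \|\tilde v\|_{L^6(X,g)}\;\le\; C_1\bigl(\|d\tilde v\|_{L^2(X,g)}+\|\tilde v\|_{L^2(X,g)}\bigr)\,.
$$

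Second, decompose $\tilde v=v_0+c$ with $c:=\vol(X,g)^{-1}\int_X \tilde v\,dV_g$ and $\int_X v_0\,dV_g=0$. Since $(X,g)$ is closed and smooth, the standard Poincar\'{e} inequality on $(X,g)$ gives $\|v_0\|_{L^2}\le C_2\|dv_0\|_{L^2}=C_2\|d\tilde v\|_{L^2}$. Hence by the triangle inequality
$$
  \|\tilde v\|_{L^2}\;\le\; \|v_0\|_{L^2}+|c|\,\vol(X,g)^{1/2}\;\le\; C_2\|d\tilde v\|_{L^2}+\vol(X,g)^{-1/2}\Bigl|\!\int_X\tilde v\,dV_g\Bigr|\,.
$$
Plugging this back into the Sobolev estimate gives a bound of the shape
$$
  \|\tilde v\|_{L^6(X,g)}\;\le\; D_2\Bigl(\|d\tilde v\|_{L^2(X,g)}+\Bigl|\!\int_X\tilde v\,dV_g\Bigr|\Bigr)\,.
$$

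Third, translate back to $X^{\diamond}$: since $\tilde v$ and $v$ agree on $X^{\diamond}$ and $\tilde v\equiv 0$ on $\tilde\Gamma$ (which is of Hausdorff codimension $2$ and hence of measure zero for either $g$ or $g^{\prime}$), one has $\|\tilde v\|_{L^p(X,g)}=\|v\|_{L^p(X^{\diamond},g)}$ and similarly for $d\tilde v$ in $L^2$, while $\int_X\tilde v\,dV_g=\int_{X^{\diamond}}v\,dV_g$. This gives the desired inequality with $dV_g$, and the quasi-isometry of Example 2.1.3$(a)$ converts it into the same inequality with $dV_{g^{\prime}}$ at the cost of absorbing a dimensional factor of the dilatation into $D_2$.

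No step presents a real obstacle: the only points requiring mild care are the extension-by-zero (guaranteed by $\mathrm{supp}\,v\Subset X^{\diamond}$) and the fact that $\tilde\Gamma$ is negligible for integration, both of which are immediate. The calculation only uses standard Sobolev embedding and Poincar\'{e} on a smooth closed $3$-manifold together with the tame quasi-isometry already recorded in Example 2.1.3.
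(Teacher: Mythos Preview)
Your approach is sound and genuinely different from the paper's. The paper follows Joyce's argument verbatim, replacing Joyce's eigenvalue bound by Lemma~2.1.2 (the Poincar\'e-type inequality on $(X^{\diamond},g')$ obtained via Dirichlet eigenvalues on an exhausting sequence of subdomains, cf.\ Example~2.1.3$(b)$). You instead extend by zero to the smooth closed manifold $(X,g)$, use the standard Sobolev embedding and Poincar\'e inequality there, and transfer via the quasi-isometry of Example~2.1.3$(a)$. This is more elementary: it sidesteps the eigenvalue analysis on the singular manifold entirely.

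One point needs care in your Step~5. The quasi-isometry $c^{-2}g\le g'\le c^2g$ converts the $L^6$ and $L^2$ norms by fixed multiplicative factors, but it does \emph{not} convert $\bigl|\int_X\tilde v\,dV_g\bigr|$ into a bounded multiple of $\bigl|\int_X\tilde v\,dV_{g'}\bigr|$, since $\tilde v$ may change sign and the ratio $dV_g/dV_{g'}$ is only known to lie in $[c^{-3},c^3]$. The clean fix is to transfer the two ingredients separately rather than the combined inequality: Sobolev $L^2_1\hookrightarrow L^6$ transfers immediately, and Poincar\'e transfers because for $w$ with $g'$-mean zero one has
\[
  \|w\|_{L^2(g')}\;\le\;\|w-\bar w_{g}\|_{L^2(g')}\;\le\;c^{3/2}\|w-\bar w_{g}\|_{L^2(g)}\;\le\;c^{3/2}C_2\|dw\|_{L^2(g)}\;\le\;c^{4}C_2\|dw\|_{L^2(g')}\,,
\]
the first inequality holding because the $g'$-mean ($=0$) minimises the $L^2(g')$-distance to constants. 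With Sobolev and Poincar\'e both established for $g'$, your Steps~2--4 run through with $g'$ in place of $g$ and yield the proposition directly.
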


\begin{proof}
 Note that since $X$ is connected, so is $X^{\diamond}$.
 The proof of [Jo3: III~Proposition~6.11] then goes through,
  word for word, with only
  \begin{itemize}
   \item[$\cdot$]
    `embedding' replaced by `immersion';

   \item[$\cdot$]
   [Jo3: I.~Theorem~2.17] replaced by
    Lemma~2.1.2 and Example~2.1.3 of the current notes
    to take care of the counter situation of branched coverings
    under study.
  \end{itemize}

\end{proof}

\bigskip

\begin{flushleft}
{\bf (Non)existence of $A_7$ in Statement~3.4.1.}
\end{flushleft}
Recall the parameters
 $tR^{\prime}_0<b_1^t<b_2^t(=Ct^{1-\eta})<R_0\le 1$,
 which set the range of gluings in the gluing construction in Sec.~3.1
 from the aspect of target Calabi-Yau $3$-fold $Y$.
$t\in (0,\delta)$ with $0<\delta<1$ small enough.
As $f^t:X\rightarrow Y$ is an immersion
  with $X$ carrying the pull-back metric,
 we'll directly treat $X$ as a submanifold in $Y$
  in the following computations and estimates
  whenever this is more convenient.

Let $a,b\in {\Bbb R}$ with $0<a<b<1-\eta$.
Then for $\delta_0$ small enough, $2b_2^t<t^b<t^a<R_0$
 holds for all $t\in (0,\delta)$.
Let $G:(0,\infty)\rightarrow [0,1]$ be a smooth decreasing function
 with $G(s)=1$ for $s\in (0,a]$ and $G(s)=0$ for $s\in [b,\infty)$.
Write $\dot{G}$ for $dG/ds$.
Recall
 the coordinates $(x_1,x_2,x_3)$ on $\nu_X(\tilde{\Gamma}_j)$ and
 the coordinates $(u_1,u_2,u_3)$ on $\nu_{Z_0}^j(\Gamma_i)$
 in Definition~3.1.1.
For $x\in \nu_X(\tilde{\Gamma}_j)$
      for some $j\in \{1,\,\cdots,\, \tilde{n}_0\}$,
 denote $r:=|(x_1+\sqrt{-1}x_2)^{m_j}|$.
For $t\in(0,\delta)$, define a function $F^t:N^t=X\rightarrow [0,1]$
 by
 $$
  F^t(x)\;=\;  \left\{
   \begin{array}{lcl}
    0\,,
     && \mbox{for $x\in \nu_X(\tilde{\Gamma}_j)$,
         $\,j\in\{1,\,\cdots,\,\tilde{n}_0\}$,
         $\;0\,\le\,r\,\le\, t^b$}\,,                      \\[1.2ex]
    G((\log r)/(\log t))\,,
     && \mbox{for $x\in \nu_X(\tilde{\Gamma}_j)$,
                  $\,j\in\{1,\,\cdots,\,\tilde{n}_0\}$,
         $\;t^b\,\le\,r\,\le\, t^a$}\,,                    \\[1.2ex]
    1\,,
     && \mbox{for $x$ at elsewhere in $X$}\,.
   \end{array}
               \right.
 $$
Then, $F^t$ is a smooth function on $N^t$ with
 $$
  dF^t\; =\;   \left\{
   \begin{array}{lcl}
    (\log t)^{-1}\,\dot{G}((\log r)/(\log t))\,r^{-1}\,dr
     && \mbox{on
         $\,\coprod_{j=1}^{\tilde{n}_0}\{t^b\,\le\,r\,\le\, t^a\}\,
          \subset\, \coprod_{j=1}^{\tilde{n}_0}
                              \nu_X(\tilde{\Gamma}_j)$}\,, \\[1.2ex]
    0   && \mbox{elsewhere}\,.
   \end{array}
               \right.
 $$
$\{F^t\,,\,1-F^t\}$ gives thus a two-component partition of unity
 on $N^t$.
We'll use it to decompose a function $v\in C^1(N^t)$
  with $\int_{N^t}v\,dV^t=0$ into $v=F^tv+(1-F^t)v$;  and
 then treat
  $F^tv$ as a compactly-supported function on $X^{\diamond}$
   and apply Proposition~3.4.4 to it,  and
  $(1-F^t)v$ as a compactly supported function on
   $\coprod_{j=1}^{\tilde{n}_0}
             (t\cdot L^{a_j})
    \subset \coprod_{j=1}^{\tilde{n}_0} Y^{\prime,j}$ and
   apply Corollary~3.4.3 to each component of it.

For the first part,
 since $f^t$ and $f$ coincide on
   $X-\coprod_{j=1}^{\tilde{n}_0}\{0\le r\le b_2^t\}$  and
   $b_2^t<t^b$,
 $F^tv$ is naturally in $C^1_{cs}(X^{\diamond})$ and
 $g^t=g$ on the support of $F^tv$.
Proposition~3.4.4 plus a H\"{o}lder's inequality gives then
 %
 \begin{eqnarray*}
  \lefteqn{\|F^tv\|_{L^6}\;
   \le\; D_2\,
     \left(\|d(F^tv)\|_{L^2}\,+\, \left|\int_{N^t} F^tv\,dV^t\right|\,
     \right)} \\[1.2ex]
   && \le\; D_2\,
     \left( \|F^t\,dv\|_{L^2}\,
            +\, \|v\|_{L^6}\cdot
                  \left(
                   \|dF^t\|_{L^3}\,+\,\|1-F^t\|_{L^{6/5}}
                  \right)
     \right)\,.
 \end{eqnarray*}

For the second part, we prove first a lemma:

\begin{lemma}
{\bf [quasi-isometry with uniform bound on dilatation].}
 Assume that
  $b_1^t=C_1t^{1-\eta_1}$ and $b_2^t=C_2t^{1-\eta_2}$,
  where $0<\eta_1<\eta_2<1-a<1$.
 Assume further that
  $\eta_2> \max_j
    \left\{\frac{1}{2}(1+\frac{1}{m_j})\,,\,\frac{1-a}{m_j}\right\}$.
 Then, for $\delta>0$ small enough and all $t\in (0,\delta)$,
  the map
  $$
   f^t(x_1,x_2,x_3)\;  \longmapsto\;
    ((x_1+\sqrt{-1}x_2)^{m_j},\,x_3,\,
      a^{1/m_j}\,t^{(m_j-1)/m_j}\,(x_1+\sqrt{-1}x_2),0)
  $$
  gives a diffeomorphism
  that takes $f^t(\{|x_1+\sqrt{-1}x_2|^{m_j}\le t^a\})$
    to $\left.L^{\prime,a_jt^{m_j-1}}\right|
                            _{|u_1+\sqrt{-1}u_2|\le t^a}$
   quasi-isometrically with the dilatation uniformly bounded
   both from above and from below (away from zero) $t$-independently.
\end{lemma}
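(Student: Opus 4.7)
The plan is to reinterpret the stated map as a direct comparison of two induced Riemannian metrics on a common parameter space, and then bound the dilatation region by region. Since both $f^t(\{|x_1+\sqrt{-1}x_2|^{m_j} \le t^a\})$ and $L^{\prime,a_jt^{m_j-1}}|_{|u_1+\sqrt{-1}u_2|\le t^a}$ project onto the same base region $\{|\hat{z}_1|\le t^a\}\times S^1_{u_3}$ — parameterized by $(x_1,x_2,x_3)$ with $\hat{z}_1=(x_1+\sqrt{-1}x_2)^{m_j}$ and $u_3=x_3$ — and each is the graph of a smooth map into the $(\hat{z}_2,v_3)$-plane, the prescribed map is the identity on $(x_1,x_2,x_3)$ and only alters the $\hat{z}_2$-value. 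Hence it is automatically a smooth diffeomorphism, and the quasi-isometry claim reduces to showing that the two pullback metrics on the common $X$-side domain differ by bounded multiplicative factors independent of $t$.

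Introducing polar coordinates $(x_1+\sqrt{-1}x_2)=\rho e^{\sqrt{-1}\theta}$ (so that $r_1=\rho^{m_j}$) and exploiting the ansatz $\hat{z}_2=r_2(r_1)\,e^{\sqrt{-1}\theta}$ on both sides, a direct computation shows both metrics are diagonal and agree in their $dx_3^2$-coefficient, with the $d\rho^2$-coefficients given by $m_j^2\rho^{2(m_j-1)}[1+(dr_2/dr_1)^2]$ (source) and $m_j^2\rho^{2(m_j-1)}[1+(dr_2^L/dr_1)^2]$ (target), and the $d\theta^2$-coefficients by $m_j^2\rho^{2m_j}+r_2^2$ and $m_j^2\rho^{2m_j}+(r_2^L)^2$, where $r_2=a_j^{1/m_j}t^{(m_j-1)/m_j}[\tfrac{m_j}{m_j+1}\dot\chi^t+\chi^t r_1^{1/m_j}]$ by Remark 3.1.6 and $r_2^L=a_j^{1/m_j}t^{(m_j-1)/m_j}r_1^{1/m_j}$. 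The problem therefore reduces to uniform ratio bounds between these two pairs of scalar coefficients.

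I would then stratify the region $\{r_1\le t^a\}$ into three pieces, following Notation 3.1.7: the core $P_j^t=\{r_1\le b_1^t\}$, the interpolation annulus $Q_j^t=\{b_1^t\le r_1\le b_2^t\}$, and the outer collar $T_j^t=\{b_2^t\le r_1\le t^a\}$ — the hypothesis $\eta_2<1-a$ ensuring $b_2^t\le t^a$ for small $t$ so the three pieces exhaust the region. On $P_j^t$ we have $\chi^t\equiv 1$ and $\dot\chi^t=\ddot\chi^t=0$, so $f^t$ coincides with $\psi^{j,a_jt^{m_j-1}}$ and the two metrics are identical. On $T_j^t$ the cutoff is zero, giving $r_2=0$; the bound $r_1\ge b_2^t=C_2 t^{1-\eta_2}$ implies both $(dr_2^L/dr_1)^2$ and $(r_2^L)^2/(m_j^2\rho^{2m_j})$ are $O(t^{2(m_j-1)\eta_2/m_j})$, which tends to zero, so both ratios approach $1$. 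On $Q_j^t$ I would substitute the Lemma 3.1.3 bounds $|\dot\chi^t|\le C_0/b_2^t$ and $|\ddot\chi^t|\le C_0/(b_2^t)^2$ into the expansion $dr_2/dr_1=a_j^{1/m_j}t^{(m_j-1)/m_j}[\tfrac{m_j}{m_j+1}\ddot\chi^t+\dot\chi^t r_1^{1/m_j}+\tfrac{1}{m_j}\chi^t r_1^{(1-m_j)/m_j}]$, apply $(A+B+C)^2\le 3(A^2+B^2+C^2)$, and match each of the three resulting summands against $(dr_2^L/dr_1)^2$ term by term, using the interval $r_1\in[b_1^t,b_2^t]$ to control the powers of $r_1$.

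The main obstacle lies in the interpolation annulus $Q_j^t$: there the worst summand, $[t^{(m_j-1)/m_j}\ddot\chi^t]^2\lesssim t^{2(m_j-1)/m_j}(b_2^t)^{-4}=O(t^{2(m_j-1)/m_j-4(1-\eta_2)})$, remains uniformly bounded precisely because the hypothesis $\eta_2>\tfrac{1}{2}(1+\tfrac{1}{m_j})$ forces the exponent $2(m_j-1)/m_j-4(1-\eta_2)$ to be nonnegative, and the companion hypothesis $\eta_2>(1-a)/m_j$ further calibrates the matching across the $Q_j^t$–$T_j^t$ boundary. Once each summand of the numerators is shown to be bounded above by either a $t$-independent multiple of the corresponding summand of the denominators or by a term vanishing as $t\to 0$, one obtains the upper dilatation bound, while the lower bound follows from the obvious inequalities $1+(dr_2/dr_1)^2\ge 1$ and $m_j^2\rho^{2m_j}+r_2^2\ge m_j^2\rho^{2m_j}$ together with the fact that the target-side extra terms $(dr_2^L/dr_1)^2$ and $(r_2^L)^2$ remain $O(1)$ on the entire region.
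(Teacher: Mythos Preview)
Your approach is correct and coincides with the paper's: both arguments reduce the claim to controlling $|dr_2/dr_1|$ on the region where the two graphs differ, using the Lemma~3.1.3 cutoff estimates, with the hypothesis $\eta_2>\tfrac12(1+\tfrac1{m_j})$ governing the dominant $\ddot\chi^t$ contribution exactly as you identify. The paper's write-up is somewhat more economical---rather than stratifying and bounding coefficient ratios region by region, it simply shows that both $|dr_2/dr_1|$ and $|dr_2^L/dr_1|$ tend uniformly to~$0$ on $[b_1^t,t^a]$, so that both induced metrics converge to the common branched-cover metric and the dilatation tends to~$1$; this also delivers the $d\theta^2$-ratio bound on $Q_j^t$ for free (by integrating the slope bound from $r_1=b_1^t$, where $r_2=r_2^L$), a step your outline leaves implicit and which would not follow from a naive term-by-term bound on $r_2^2/r_1^2$ when $\eta_1$ is small.
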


\begin{proof}
 Recall Remark~3.1.6 and the proof of Definition/Lemma~3.3.1.
 Under the assumption that
   $b_1^t=C_1t^{1-\eta_1}$ and $b_2^t=C_2t^{1-\eta_2}$,
   where $0<\eta_1<\eta_2<1-a<1$,
 one has the following estimates over
  $(tR^{\prime}_0<)\, b_1^t\le r \le t^a\,$:
  \begin{eqnarray*}
   \lefteqn{\left|
    \frac{d}{dr}
      \left(
       a^{1/m_j}\,t^{(m_j-1)/m_j}\,
         \left[\frac{m_j}{m_j+1}\,\dot{\chi}^t(r)\,
              +\,\chi^t(r)\,r^{1/m_j}\right]
                \right) \right|}                                \\[1.2ex]
    && =\;
     a^{1/m_j}\,t^{(m_j-1)/m_j}\,
     \left|
      \frac{m_j}{m_j+1}\,\ddot{\chi}^t(r)\,
      +\, \dot{\chi}^t(r)\,r^{1/m_j}\,
      +\, \frac{1}{m_j}\,\chi^t(r)\,r^{(1-m_j)/m_j}  \right|    \\[1.2ex]
    && \le\;
     a^{1/m_j}\,t^{(m_j-1)/m_j}\,
     \left(\frac{m_j}{m_j+1}\cdot\frac{C_0}{(b_2^t)^2}\,
           +\, \frac{C_0}{b_2^t}\,r^{1/m_j}\,
           +\, \frac{1}{m_j}\,r^{(1-m_j)/m_j} \right)           \\[1.2ex]
    && \le\;
     O(\,t^{2\eta_2-1-\frac{1}{m_j}}\,)\,
     +\, O(\,t^{\eta_2-\frac{1-a}{m_j}}\,)\,
     +\, O(\,t^{(1-\frac{1}{m_j})\eta_1}\,)\,.
  \end{eqnarray*}
 The further assumption that
   $\eta_2>\max_j
    \left\{\frac{1}{2}(1+\frac{1}{m_j})\,,\,\frac{1-a}{m_j}\right\}$
  implies that
   the right-hand side of the last inequality tends to zero
   as $t\rightarrow 0$.
 On the other hand,
  $$
   \left| \frac{d}{dr}
      \left( a^{1/m_j}\,t^{(m_j-1)/m_j}\,r^{1/m_j} \right) \right|\;
   \le\; O(\,t^{1-\frac{1-a}{m_j}}\,)
  $$
  over $b_1^t\le r \le t^a$,
  which also tends to $0$ as $t\rightarrow 0$.
 The lemma follows.

\end{proof}

Applying Corollary~3.4.3 to $(1-F^t)v$ on $t\cdot L^{\prime,a}$
 gives $\|(1-F^t)v\|_{L^6}\le D_1\,\|d((1-F^t)v)\|_{L^2}$ with
 the norms computed using $g^{\prime}$ on $t\cdot L^{\prime,a}$.
Increasing $D_1$ to $2D_1$, the same inequality holds
 with norms computed using $g^t$ for small $t$.
Thus, making $\delta>0$ smaller if necessary,
 for all $t\in(0,\delta)$ and
  each connected component of $(1-F^t)v$ on $N^t$
   (labelled by $j=1,\,\cdots,\,\tilde{n}_0$),
 $$
  \|(1-F^t)v\|_{L^6}\; \le\;  2\,D_1\,\|d((1-F^t)v)\|_{L^2}\,.
 $$
Summing over $j$ gives
 \begin{eqnarray*}
  \lefteqn{\|(1-F^t)v\|_{L^6}\;
     \le\; 2\,\sqrt{\tilde{n}_0}\,D_1\,\|d((1-F^t)v)\|_{L^2}    }\\[1.2ex]
  && \le\; 2\,\sqrt{\tilde{n}_0}\,D_1\, \left(
       \|(1-F^t)\,dv\|_{L^2}\,+\,\|v\|_{L^6}\cdot\|dF^t\|_{L^3}
                                \right)\,.
 \end{eqnarray*}
Combining the above inequality with
  the previous inequality for $\|F^tv\|_{L^6}$,
  the inequalities
   $\|F^tdv\|_{L^2},\, \|(1-F^t)dv\|_{L^2}\le \|dv\|_{L^2}$, and
  a H\"{o}lder's inequality
 then proves
 $$
  \left[1\,-\,(D_2+2\sqrt{\tilde{n}_0}D_1)\,\|dF^t\|_{L^3}\,
           -\,D_2\,\|1-F^t\|_{L^{6/5}}
  \right]\,\cdot\,\|v\|_{L^6}\;
  \le\; (D_2+2\,\sqrt{\tilde{n}_0}D_1)\,\|dv\|_{L^2}\,.
 $$
It follows from Lemma~3.4.5 that
 one can compute the $t$-order of $\|dF^t\|_{L^3}$ and
  the $t$-order of $\|1-F^t\|_{L^{6/5}}$ on $(N^t,g^t)$
 via the $t$-order of the corresponding same quantity on $L^{\prime}$
  and $t\cdot L^{\prime,a}$ in
  $\coprod_{j=1}^{\tilde{n}_0}(Y^{\prime,j},g^{\prime})$
  respectively.

On $L^{\prime}\subset Y^{\prime,j}$,
 \begin{eqnarray*}
  \|dF^t\|_{L^3}  & =
   & \left(
      \int_{u_3=0}^{l_j}\int_{\theta=0}^{2m_j\pi}\int_{r=t^b}^{t^a}\,
      \left|(\log t)^{-1}\,\dot{G}((\log r)/(\log t))\,r^{-1}\right|^3\,
                   r\,dr\,d\theta\,du_3 \right)^{1/3}   \\[1.2ex]
  & =
   & l_j^{1/3}\,(2m_j\pi)^{1/3}\,(-\log t)^{-1}\,
        \left( \int_{r=t^b}^{t^a}\,
                 |\dot{G}((\log r)/(\log t))|^3\, r^{-2}\,dr
         \right)^{1/3}\,.
 \end{eqnarray*}
It follows from the facts
  that  $|\dot{G}((\log r)/(\log t))|$ is bounded above
   by a constant independent of $t$ and $r$ and
  that  there exist $(0<)\,a<a^{\prime}<b^{\prime}<b$
   such that the restriction of $|\dot{G}((\log r)/(\log t))|$
   to $[t^{b^{\prime}},t^{a^{\prime}}]$ is bounded below
   by a positive constant independent of $t$ and $r$
 that
 $$
  O(-(\log t)^{-1}\,t^{-b^{\prime}/3})\;
   \le\; \|dF^t\|_{L^3}\; \le  O(-(\log t)^{-1}\,t^{-b/3})\,.
 $$
Since $b, b^{\prime}>0$, it follows that
 $\|dF^t\|_{L^3} \rightarrow \infty$ as $t\longrightarrow 0$.

On $t\cdot L^{\prime,a_j}\subset Y^{\prime,j}$,
 \begin{eqnarray*}
  \|1-F^t\|_{L^{6/5}}  & =
   & \left(
      \int_{u_3=0}^{l_j}\int_{\theta=0}^{2m_j\pi}\int_{r=0}^{t^a}\,
      \left|1-G((\log r)/(\log t))\right|^{6/5}  \right.  \\[1.2ex]
  &&\hspace{6em} \left.\rule{0em}{1.4em} \cdot
      r\,\left( 1\,+\,m_j^{-2}\,a_j^{2/m_j}\,r^{2(1-m_j)/m_j}\right)\,
          dr\,d\theta\,du_3 \right)^{5/6}                 \\[1.2ex]
  & \le
   & l_j^{5/6}\,(2m_j\pi)^{5/6}\,
        \left(\frac{1}{2}\,\left[
                      r^2\,+\, m_j^{-1}\,a_j^{2/m_j}\,r^{2/m_j}
                           \right]_0^{t^a} \right)^{5/6}  \\[1.2ex]
  & =
   & O\left(t^{5a/(3m_j)}\right) \hspace{2em}
      \mbox{------------}\hspace{-1ex}\longrightarrow\;0
      \hspace{2em}\mbox{as $\;t\longrightarrow 0$}\,.
 \end{eqnarray*}

It follows that one cannot extract the Sobolev inequality
 in Statement~3.4.1 that bounds $\|v\|_{L^6}$ by $\|dv\|_{L^2}$
 from the inequality derived earlier:
 $[1\,-\,(D_2+2\sqrt{\tilde{n}_0}D_1)\,\|dF^t\|_{L^3}\,
              -\,D_2\,\|1-F^t\|_{L^{6/5}}]\cdot\|v\|_{L^6}\,
                \le\, (D_2+2\,\sqrt{\tilde{n}_0}D_1)\,\|dv\|_{L^2}$.
The validity of Statement~3.4.1
 is thus left open in our situation through the above method.

%
%
%
%
%
%
%
%
%
%
%
%
%
%
%
%
%
%
%
%
%
%
%
%
%
%
%
%
%
%
%
%
%
%
%
%
%
%
%
%
%
%
%
%
%
%
%
%
%

\bigskip

\section{Summary and remark: Input from the topology of $X$ and
         the branching of $f$.}

Given a branched covering $f:X\rightarrow Z_0\simeq S^3\subset Y$
 a special Lagrangian $3$-sphere $Z_0$ in a Calabi-Yau $3$-fold $Y$,
one expects that the natural family of immersed Lagrangian deformations
 $f^t:X\rightarrow Y$ of $f$,
  $t\in (0,\delta)$ for $0<\delta<1$ small enough,
 constructed in Sec.~3.1
  (1)
   have the Sobolev norms of the variations of the mean curvature
   along $f^t$ too large as $t\rightarrow 0$, (cf.\ Sec.~3.2),
  (2)
   cannot pass directly a Sobolev inequality test that allows one
   to conclude the existence of a $t$-independent uniform positive
   lower bound for the first eigenvalue of the Laplacian $\Delta_{g^t}$
   of $(X,g^t)$, where $g^t:= (f^t)^{\ast}g$ is the pull-back metric
   on $X$, (cf.\ Sec.~3.4),
 though
  (3)
   $f^t$ still meets the criteria on the size and the geometry
     in a Lagrangian neighborhood of $f^t$ in $Y$ that demand $f^t$
    to approach the singular $(X,g^0)$ not too fast,
     as $t\rightarrow 0$, and
    to have enough room in $Y$ to deform $f^t$ further,
   (cf.\ Sec.~3.3).
Issue (1) suggests that one has to glue the approximately special
 Lagrangian local models $a\cdot L^{\prime,a}$ not to $f$,
 but some Lagrangian perturbation of $f$ that matches
 the flaring-out rate of $tL^{\prime,a}$ better.
Issue (2) suggests that one has to identify the collection of eigenvalues
 of $(X,g^t)$ that approach $0$ as $t\rightarrow 0$ and then consider
 deformations of $f$ that are only in the complementary directions
 to those that correspond to the eigenfunction associated to these
 eigenvalues.
Item (3) suggests that the gluing with a scaling construction,
 as done in [Jo3] of Joyce, is a versatile construction,
 which one would like to keep while remedying Issues (1) and (2).

While the detail of the deviation requires specific computations
 as is done in Sec.~3,
that the immersed Lagrangian deformation constructed in this note
  will deviate from being deformable to
  an immersed special Lagrangian deformation,
  no matter what adjustments one attempts,
 is anticipated even before one gets into such details.
This is because there are examples of branched coverings of $S^3$
 by $S^3$ and the construction in this note apply to them as well.
Should there be no deviations, one would have constructed
 a nontrivial family of immersed special Lagrangian $S^3$'s,
 which contradicts (the immersed generalization of) a result of
 Robert McLean ([McL: Theorem~3.6]) which implies, in particular, that
 an immersed special Lagrangian 3-sphere is rigid.
Thus, if such a deformation is possible,
 the topology of the domain $X$ of the branched covering
 $f: X\rightarrow S^3$ of a special Lagrangian $S^3$ in question
 must play a significant role in rectifying the deviations created
 in the naive gluing as we see in this note.
The detail of the local model given in Sec.~2.2 combined with
 the original study of deformations of special Lagrangian submanifolds
 in [McL: Sec.~3]
suggests that
 one such necessary condition would be:
 \begin{itemize}
  \item[$\cdot$]
   [{\it infinitesimal condition for unwrapping}$\,$]\hspace{1em}
   There exists a harmonic/closed-'n'-coclosed $1$-form $\alpha$ on $X$
   that has the decay behavior exactly of type $\rho^{m_i}d\rho$
   around the component $\Gamma_i$ of the branch locus $\Gamma\subset X$
   of the branched covering $f:X\rightarrow S^3$
   in a Calabi-Yau $3$-fold $Y$.
 \end{itemize}
This is a condition that involves both the topology of $X$ and
 the branching of the map $f:X\rightarrow S^3$.
Here,
 $m_i$ is the degree of $f$ around $\Gamma_i$,
 $\rho$ is the distance function $\distance(\,\cdot\,,\Gamma)$ on $X$
  associated to the pull-back metric
  (with curvature singularities along $\Gamma$),  and
 the harmonicity/closed-'n'-coclosedness of $\alpha$
  is with respect to the latter singular metric on $X$ as well.
If $\alpha$ behaves of type $\rho^{\lambda_i}d\rho$ along $\Gamma_i$
  with $\lambda_i<m_i$ for some $i$,
 then one has a chance of tearing off $f$ along $\Gamma_i$ when trying
  to deform $f$ as guided by $\alpha$
  in the realm of special Lagrangian immersions.
In the opposite direction,
 if $\lambda_i>m_i$ for some $i$,
 then the corresponding candidate special Lagrangian deformations of $f$
  may remain branched along $\Gamma_i$ and hence fails to be an immersion.

This leads one thus to issues on
 such harmonic/closed-'n'-coclosed $1$-forms and
 the existence/nonexistence of obstructions
 to the above first order picture.

\newpage
\baselineskip 13pt
{\footnotesize

}

\end{document}